\documentclass[11pt]{article}
\usepackage[margin=1in]{geometry}

\usepackage{amsmath, amssymb, amsfonts, amsthm}
\usepackage{mathtools}

\usepackage{graphicx}
\usepackage{subcaption}
\usepackage{adjustbox}
\usepackage{tikz-cd}
\usepackage{tikz}
\usetikzlibrary{patterns}

\usepackage{booktabs}
\usepackage{tabularx}
\usepackage{enumitem}

\usepackage{pifont}
\newcommand{\cmark}{\ding{51}}
\newcommand{\xmark}{\ding{55}}

\usepackage{hyperref}
\hypersetup{
  pdftitle={Learning Tangent Bundles and Characteristic Classes with Autoencoder Atlases},
  pdfauthor={E. Paluzo-Hidalgo, Y. Ike},
  colorlinks=true,
  linkcolor=blue,
  citecolor=blue,
  urlcolor=blue
}
\usepackage{cleveref}

\usepackage{appendix}
\newcommand{\R}{\mathbb{R}}
\newcommand{\Z}{\mathbb{Z}}

\newcommand{\Id}{\mathrm{Id}}
\newcommand{\sign}{\mathrm{sign}}

\newcommand{\GL}{\mathrm{GL}}

\theoremstyle{plain}
\newtheorem{theorem}{Theorem}[section]
\newtheorem{proposition}[theorem]{Proposition}
\newtheorem{lemma}[theorem]{Lemma}
\newtheorem{corollary}[theorem]{Corollary}

\theoremstyle{definition}
\newtheorem{definition}[theorem]{Definition}
\newtheorem{example}[theorem]{Example}

\theoremstyle{remark}
\newtheorem{remark}[theorem]{Remark}

\title{Learning Tangent Bundles and Characteristic Classes \\ with Autoencoder Atlases}

\author{
Eduardo Paluzo-Hidalgo\thanks{Corresponding author}\,\,\thanks{Department of Applied Mathematics I, University of Sevilla, Sevilla, Spain; Email: \href{mailto:epaluzo@us.es}{epaluzo@us.es}.}\,\,\thanks{Graduate School of Mathematical Sciences, The University of Tokyo, Tokyo, Japan.}
\and
Yuichi Ike\thanks{Graduate School of Mathematical Sciences, The University of Tokyo, Tokyo, Japan. Email: \href{mailto:ike@ms.u-tokyo.ac.jp}{ike@ms.u-tokyo.ac.jp}.}
}

\date{}

\begin{document}
\fontfamily{pbk}\selectfont
\maketitle

\begin{abstract}
We introduce a theoretical framework that connects multi-chart autoencoders in manifold learning with the classical theory of vector bundles and characteristic classes. Rather than viewing autoencoders as producing a single global Euclidean embedding, we treat a collection of locally trained encoder--decoder pairs as a learned atlas on a manifold. We show that any reconstruction-consistent autoencoder atlas canonically defines transition maps satisfying the cocycle condition, and that linearising these transition maps yields a vector bundle coinciding with the tangent bundle when the latent dimension matches the intrinsic dimension of the manifold. This construction provides direct access to differential-topological invariants of the data. In particular, we show that the first Stiefel--Whitney class can be computed from the signs of the Jacobians of learned transition maps, yielding an algorithmic criterion for detecting orientability. We also show that non-trivial characteristic classes provide obstructions to single-chart representations, and that the minimum number of autoencoder charts is determined by the good cover structure of the manifold. Finally, we apply our methodology to low-dimensional orientable and non-orientable manifolds, as well as to a non-orientable high-dimensional image dataset.
\end{abstract}

\medskip
\noindent\textbf{Keywords:} Manifold learning, vector bundle, characteristic classes, autoencoders

\medskip
\noindent\textbf{Mathematics Subject Classification (2020):} 57R20, 62R40, 55R25, 55N05, 68T07

\section{Introduction}\label{sec:introduction}

A central goal of manifold learning is to recover, from samples in a high-dimensional ambient space, a low-dimensional representation that faithfully reflects the underlying geometry of the data. The dominant paradigm — embedding the data into Euclidean space via methods such as Isomap, locally linear embedding, diffusion maps, or, more recently, autoencoders — works well when the underlying manifold is topologically trivial, but encounters fundamental obstructions otherwise. The real projective plane $\mathbb{R}P^2$ does not embed in $\R^3$; the Klein bottle does not embed in $\R^3$ without self-intersection; more generally, no closed non-orientable surface embeds in $\R^3$ at all. Even when an embedding exists, Euclidean coordinates carry no direct record of the manifold's tangent bundle, transition functions, or characteristic classes — the very objects that classical differential topology uses to distinguish, for example, an orientable surface from a non-orientable one.

These limitations are not artefacts of any particular algorithm: they are obstructions to the embedding paradigm itself. Recent advances in intrinsic dimension and tangent-space estimation \cite{doi:10.1137/22M1522711} resolve the question of \emph{which} latent dimension to target, but not the deeper question of how to represent manifolds whose topology forbids global Euclidean coordinates.

\subsection*{From embeddings to atlases}

The classical resolution, going back to the foundations of differential topology, is to abandon the single-chart picture. A smooth manifold is not a Euclidean space but a collection of overlapping coordinate charts together with their transition maps, and the topological invariants of the manifold — orientability, characteristic classes, the tangent bundle itself — are encoded in these transitions. We translate this viewpoint into the language of neural networks. In place of a single autoencoder mapping into Euclidean space, we work with an \emph{autoencoder atlas}: a collection of locally trained encoder--decoder pairs $\{(U_i, E_i, D_i)\}$ with overlapping domains, joined by the natural transition maps $T_{ji} = E_j \circ D_i$ that send a latent code in chart~$i$ to the corresponding code in chart~$j$. The atlas itself, not any single global embedding, becomes the learned object.

This shift opens a direct line from learned representations to differential-topological invariants. The transition maps inherit the cocycle condition from reconstruction consistency, so their Jacobians assemble into a vector bundle $\mathcal{T}_\mathcal{A}$ over $M$ — agreeing with $TM$ when the encoders are compatible with the smooth structure. From the cocycle one extracts the first Stiefel--Whitney class $w_1(\mathcal{T}_\mathcal{A})$ as the sign cocycle of the Jacobian determinants, giving an algorithmic test for orientability: the manifold is orientable if and only if this sign cocycle is a \v{C}ech coboundary, a condition that can be checked in linear time on the nerve graph. Non-orientable manifolds that cannot be faithfully embedded in low-dimensional Euclidean space are nonetheless represented faithfully by an atlas, and their non-orientability is detected directly from the trained model.

The main mathematical work of the paper is to make this picture rigorous in the practical setting where autoencoders achieve only approximate reconstruction. We prove that the $\Z/2$-valued sign cocycle is stable under explicit bounds on reconstruction error and Jacobian non-degeneracy, that its cohomology class equals $w_1(TM)$, and that the minimum number of charts in such an atlas is governed by a classical topological invariant — the covering type of $M$ in the sense of Karoubi and Weibel \cite{Karoubi2017} — rather than by the structure of the tangent bundle.

\subsection*{Contributions}

\begin{enumerate}
\item \textbf{Autoencoder atlases as data-driven differential structures} (Section~\ref{sec:atlasbasedformulation}). We formalize the notion of an exact and an approximate \emph{autoencoder atlas} $\mathcal{A} = \{(U_i, E_i, D_i)\}_{i \in I}$, show that reconstruction consistency canonically determines transition maps satisfying the cocycle condition, and construct a vector bundle $\mathcal{T}_\mathcal{A}$ over $M$ from the linearised transitions. This bundle agrees with $TM$ when the encoders form a smooth atlas compatible with the manifold structure.

\item \textbf{Orientability detection from learned transitions} (Section~\ref{sec:atlasbasedformulation}). The Jacobian sign cocycle $\omega_{ji}(x) = \sign(\det g_{ji}(x))$ represents $w_1(\mathcal{T}_\mathcal{A}) \in H^1(M; \Z/2)$. Orientability is decided by checking whether $\{\omega_{ji}\}$ is a \v{C}ech coboundary, a problem reducing to 2-colouring the nerve graph.

\item \textbf{Stability under approximate reconstruction} (Section~\ref{sec:stability}). Neural network training achieves only approximate reconstruction; we prove that the sign cocycle is nonetheless a valid \v{C}ech cocycle under explicit bounds on reconstruction error and Jacobian non-degeneracy (Theorem~\ref{thm:sign-cocycle-stability}). A per-triple refinement (Theorem~\ref{thm:local-stability}) shows that the differential reconstruction condition needs to hold on only two of the three charts of each nerve triangle — a strictly weaker hypothesis that explains why detection succeeds in regimes where the global bounds are violated. A homotopy argument (Theorem~\ref{thm:w1-agreement}) identifies the learned cohomology class with $w_1(TM)$.

\item \textbf{Topological lower bounds on chart count} (Section~\ref{sec:obstruction}). Non-trivial characteristic classes obstruct single-chart representations, and the minimum number of autoencoder charts whose underlying cover is good equals the covering type of $M$ \cite{Karoubi2017}. The bound depends on the homotopy type of $M$, not on the structure of the tangent bundle.
\end{enumerate}

Section~\ref{sec:experiments} validates the framework on four test manifolds — the $2$-sphere, the M\"obius band, the Klein bottle in $\R^4$, and $\mathbb{R}P^2$ represented as line-patch images in $\R^{100}$ — confirming correct orientability detection and the diagnostic role of the per-triple stability bounds.

\subsection*{Related work}

Our approach sits at the intersection of three lines of work.

\emph{Topological coordinates from persistent cohomology.}
A line of work initiated by Perea \cite{10.1007/s00454-017-9927-2,DBLP:conf/compgeom/ScoccolaP23,Scoccola_Perea_2023} uses persistent cohomology to construct \v{C}ech cocycles from overlapping local coordinate charts (typically obtained via multidimensional scaling), and resolves transition functions through orthogonal Procrustes alignment. The resulting multiscale projective coordinates detect non-orientability via sign cocycles of essentially the same form as ours, and produce explicit classifying maps into $\mathbb{R}P^d$. The key methodological difference is the source of the charts: Perea's framework constructs coordinates by linear-algebraic alignment of pre-specified local data, whereas ours obtains them as solutions of a non-linear optimisation problem, the autoencoder training objective. Related work computes persistent Stiefel--Whitney classes from discrete differential geometry on triangulated manifolds \cite{Tinarrage_2021}, but this requires explicit combinatorial manifold structure that is unavailable from raw point cloud data. Methods for identifying representative cycles of homological features \cite{10.1109/SMI.2007.25,yiding_2009} are NP-hard \cite{Chen2011}, and so do not scale to high-dimensional data. Theoretical foundations for bundle structures in discrete and approximate settings appear in \cite{Scoccola_Perea_2023,scoccola2023fiberedfiberwisedimensionalityreduction}.

\emph{Chart autoencoders.}
Schonsheck et al.\ \cite{schonsheck2020chartautoencodersmanifoldstructured,schonsheck2024semisupervisedmanifoldlearningcomplexity} introduced multi-chart autoencoder architectures and proved that multi-chart latent spaces are necessary for topologically non-trivial manifolds. Their work establishes the necessity of charts and develops finite-distortion conditions, but treats the atlas as a vehicle for global reconstruction. We take the atlas itself as the object of interest and extract differential-topological invariants from its transition structure. The topological autoencoder \cite{11301037} incorporates persistent-homology losses to preserve topological features in the latent representation, an approach complementary to but distinct from ours; we obtain topological information post-hoc from the transition cocycle, without homology computation in the training loop. Concurrent work \cite{robinett2026atlasbased,yu2026learninggeometrytopologymultichart} develops Riemannian geometric primitives across multi-chart representations but does not extract bundle-theoretic invariants from the cocycle.

\emph{Cover learning.}
The good-cover hypothesis that underlies our cohomological computations is, in practice, learned from data. Scoccola, Lim, and Harrington \cite{scoccola2025coverlearninglargescaletopology} address this problem directly, building on classical methods such as Mapper \cite{singhMC07,doi:10.1137/24M1641312}. Our experiments adopt simple landmark-based covers as a practical heuristic and we treat principled cover learning as orthogonal to our contribution.

\subsection*{Organisation}

Section~\ref{sec:background} recalls the necessary background on \v{C}ech cohomology, vector bundles, and the first Stiefel--Whitney class. Section~\ref{sec:atlasbasedformulation} introduces autoencoder atlases, proves the cocycle condition from reconstruction consistency, constructs the learned tangent bundle, and states the orientability detection criterion. Section~\ref{sec:stability} develops the stability theory for approximate reconstruction, in both global and per-triple forms. Section~\ref{sec:obstruction} analyses topological lower bounds on chart count. Section~\ref{sec:loss-functions} describes the training losses. Section~\ref{sec:experiments} presents experimental validation.

\section{Background}\label{sec:background}
In this section, we give the mathematical foundations necessary for our main results. 
We begin with essential topological concepts, then turn to characteristic classes of vector bundles, focusing on the first Stiefel--Whitney class, which detects orientability.
 
\subsection{Topological preliminaries}
 
We recall basic notions from algebraic topology that underlie our construction of characteristic classes from autoencoder atlases.
 
We relax the classic definition of a good cover, allowing multiple connected components at intersections. 
\begin{definition}[Good cover]\label{def:good-cover}
An open cover $\mathcal{U} = \{U_\alpha\}_{\alpha\in A}$ of a topological space $X$ with $A$ a finite index set is called a \emph{good cover} if:
\begin{enumerate}
\item Each $U_\alpha$ is contractible (homotopy equivalent to a point).
\item Every connected component of every non-empty finite intersection $U_{\alpha_0} \cap \cdots \cap U_{\alpha_p}$ is contractible, with $p\in\{1,\dots,\vert A\vert-1\}$. 
\end{enumerate}
\end{definition} 
 
Good covers are fundamental in reflecting the topology faithfully
via the \v{C}ech complex of the cover, as stated in the well-known
Nerve theorem~\cite{hatcher2002algebraic} (see Remark~\ref{rem:relaxed-nerve}). The minimum cardinality of a good cover is itself a topological invariant, studied by Karoubi and Weibel~\cite{Karoubi2017} under the name
\emph{covering type}; we return to this quantity in Section~\ref{sec:obstruction}, where it determines the minimum
number of autoencoder charts. When the cover is not given a priori, it must be learned from the data. Recent work~\cite{scoccola2025coverlearninglargescaletopology} identifies and addresses the cover problem, as well as studying
state-of-the-art methods such as Mapper~\cite{singhMC07,doi:10.1137/24M1641312}.
 
\begin{figure}[h!]
    \centering
\includegraphics[width=0.4\linewidth]{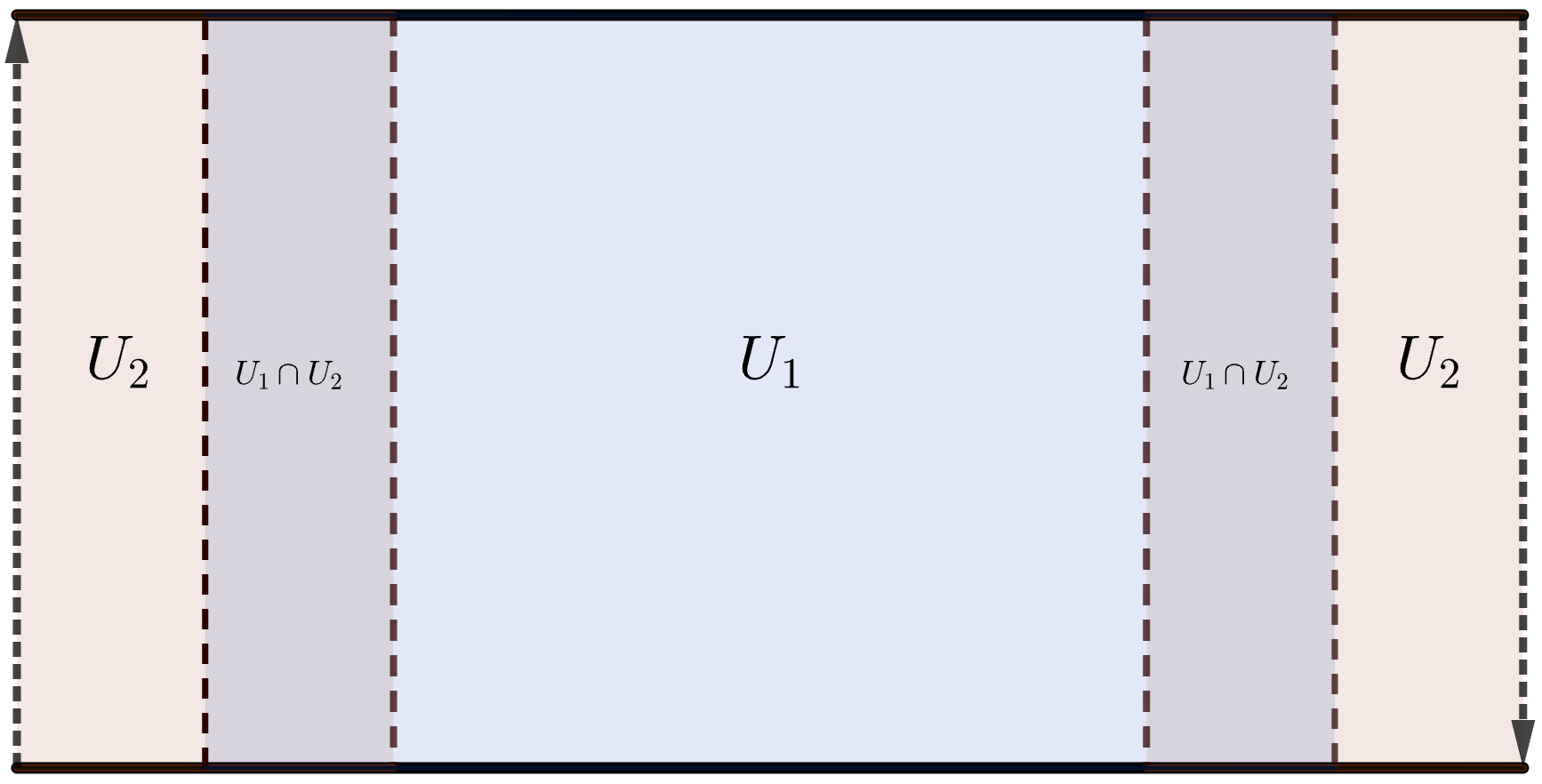}
\includegraphics[width=0.4\linewidth]{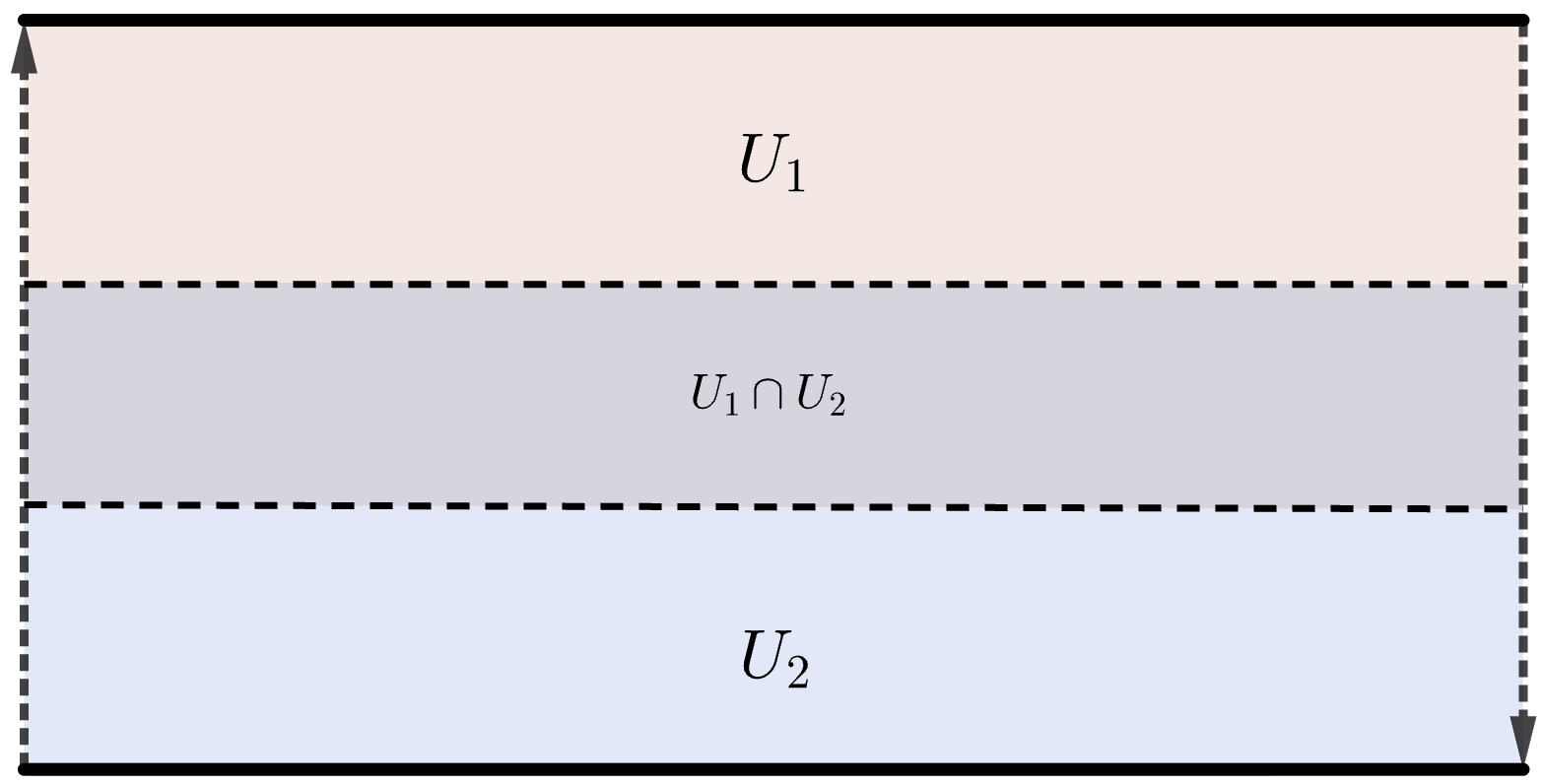}
        \caption{Two different M\"obius covers are shown. On the left, we have a good cover such that the intersections are contractible. However, on the right, we have that the intersection is another M\"obius band and hence, not contractible. }
    \label{fig:coverexample}
\end{figure}
 
\subsubsection*{\v{C}ech cohomology with $\Z/2$-coefficients}
 
For detecting orientability we need the first \v{C}ech cohomology group $\check{H}^1(\mathcal{U};\Z/2)$. We recall the general construction specialised to the case that matters for us.
 
Throughout, we identify $\Z/2$ with the multiplicative group $\{\pm 1\}$ via $0\mapsto +1$, $1\mapsto -1$. This is natural in our setting because the relevant cocycle values will arise as signs of determinants.
 
\begin{definition}[\v{C}ech cochains, cocycles, and cohomology]\label{def:cech-cohomology}
Let $\mathcal{U} = \{U_\alpha\}_{\alpha \in A}$ be an open cover of a topological space $X$, with $A$ a finite ordered index set. For $p\geq 0$, a \emph{\v{C}ech $p$-cochain} with values in $\{\pm 1\}$ assigns to each ordered $(p+1)$-tuple $\alpha_0<\cdots<\alpha_p$ with $U_{\alpha_0}\cap\cdots\cap U_{\alpha_p}\neq\emptyset$ a locally constant function $U_{\alpha_0}\cap\cdots\cap U_{\alpha_p}\to\{\pm 1\}$.
When each connected component of each intersection is contractible (as for a good cover), locally constant means constant on each component, and a $p$-cochain amounts to an assignment of a sign $\pm 1$ to each connected component of each non-empty $(p+1)$-fold intersection.
 
The \emph{coboundary} $\delta^p$ maps $p$-cochains to $(p+1)$-cochains; the key properties are $\delta^{p+1}\circ\delta^p=0$ and the resulting \v{C}ech cohomology groups $\check{H}^p(\mathcal{U};\Z/2)\coloneqq\ker\delta^p/\operatorname{im}\delta^{p-1}$.
\end{definition}
 
\begin{definition}[1-cocycles and 1-coboundaries]\label{def:Z2-cocycles}
A \emph{1-cocycle} is a collection $\{\omega_{\alpha\beta}\}_{\alpha<\beta}$ with $\omega_{\alpha\beta}\in\{\pm 1\}$ (one value per connected component of each non-empty $U_\alpha\cap U_\beta$) satisfying the \emph{cocycle condition}: for every non-empty triple intersection $U_\alpha\cap U_\beta\cap U_\gamma$ with $\alpha<\beta<\gamma$, one has $\omega_{\alpha\gamma}=\omega_{\alpha\beta}\cdot\omega_{\beta\gamma}$.
A 1-cocycle is a \emph{coboundary} if there exist $\nu_\alpha\in\{\pm 1\}$ (constant on each connected $U_\alpha$) such that $\omega_{\alpha\beta}=\nu_\alpha\cdot\nu_\beta$ for all $\alpha<\beta$ with $U_\alpha\cap U_\beta\neq\emptyset$. The first \v{C}ech cohomology group is
\[
\check{H}^1(\mathcal{U};\Z/2)=\frac{\{1\text{-cocycles}\}}{\{1\text{-coboundaries}\}}.
\]
\end{definition}
 
\begin{theorem}[Isomorphism for good covers {\cite[Theorem 8.9]{BottTu}}]\label{thm:good-cover}
If $\mathcal{U}$ is a good cover of a paracompact Hausdorff space $X$, then
$\check{H}^p(\mathcal{U};\Z/2)\cong H^p(X;\Z/2)$,
where $H^p(X;\Z/2)$ denotes the singular cohomology of $X$ with $\Z/2$-coefficients.
\end{theorem}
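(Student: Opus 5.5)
The plan is the standard Čech-to-derived-functor comparison via the Čech--de Rham (or Čech--singular) double complex, as in Bott--Tu, adapted to the relaxed notion of good cover in Definition~\ref{def:good-cover}. Let $\mathcal{S}^q$ denote the presheaf $U\mapsto S^q(U;\Z/2)$ of singular $q$-cochains, computed with $\mathcal{U}$-small simplices. Form the double complex
\[
K^{p,q}=\prod_{\alpha_0<\cdots<\alpha_p} S^q(U_{\alpha_0}\cap\cdots\cap U_{\alpha_p};\Z/2),
\]
with horizontal differential the Čech coboundary $\delta$ and vertical differential the singular coboundary $d$. I will show that the total cohomology of $K$ is isomorphic both to $H^*(X;\Z/2)$ and to $\check{H}^*(\mathcal{U};\Z/2)$.

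The first step is exactness of the rows. For each fixed $q$, the augmented sequence
\[
0\to S^q_{\mathcal{U}}(X)\to \prod_\alpha S^q(U_\alpha)\to\prod_{\alpha<\beta}S^q(U_\alpha\cap U_\beta)\to\cdots
\]
is exact. This is the generalized Mayer--Vietoris principle, and I will prove it with a partition-of-unity-free combinatorial homotopy: each $\mathcal{U}$-small simplex $\sigma$ is assigned to the set of indices $\alpha$ with $\sigma\subset U_\alpha$, and the contracting homotopy is built by choosing, for each such $\sigma$, one index from that set. Combined with the standard fact that $\mathcal{U}$-small singular cochains compute $H^*(X)$ (subdivision), this identifies the total cohomology with $H^*(X;\Z/2)$. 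Paracompactness enters only if one prefers sheaf cohomology, so the subdivision route is the cleaner choice here.

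The second step is the columns. For fixed $p$, the column cohomology is $\prod H^q(U_{\alpha_0}\cap\cdots\cap U_{\alpha_p};\Z/2)$. Under the relaxed definition each intersection is a disjoint union of contractible components, each of which is open because $X$ is locally connected, as manifolds are. Hence $H^q=0$ for $q>0$, and $H^0$ is the group of locally constant $\{\pm1\}$-valued functions, that is, one sign per component. This is exactly the Čech cochain group of Definition~\ref{def:cech-cohomology}. The column-then-row spectral sequence therefore collapses at $E_2$ onto the row $q=0$, giving total cohomology $\check{H}^*(\mathcal{U};\Z/2)$.

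Combining the two collapses yields the isomorphism. I expect the main obstacle to be the componentwise issue: I must check that the standard argument really tolerates disconnected intersections. This reduces to the claim that the singular cohomology of an open set splits as the product of the cohomologies of its path components, and that these path components coincide with its connected components. That holds whenever $X$ is locally path-connected. I would state this as a standing assumption, which is automatic for manifolds, since for an arbitrary paracompact Hausdorff space the hypothesis should be supplemented this way.
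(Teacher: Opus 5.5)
Your argument is correct. It is the singular-cochain form of the double-complex (generalized Mayer--Vietoris) proof behind the result the paper cites. The paper gives no argument of its own beyond the citation, and the cited Bott--Tu theorem is the de Rham, real-coefficient statement for manifolds. So for $\Z/2$ coefficients your $\mathcal{U}$-small singular route is the version actually needed, and it shows that paracompactness plays no role.

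Where you genuinely differ from the paper is in handling the relaxed Definition~\ref{def:good-cover}. The paper reduces to the classical case through the refinement sketched in Remark~\ref{rem:relaxed-nerve}. You instead absorb disconnected intersections directly into the column computation, with $H^0$ of an intersection contributing one $\Z/2$ per component. That is cleaner, because splitting intersections into components does not by itself produce a cover whose intersections are connected.

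Your extra standing hypothesis is necessary, not merely convenient. Take $Y=\{0\}\cup\{1/n : n\geq 1\}$ and $X=SY$ (compact metrizable), covered by its two open cones, which overlap in $Y\times(-\tfrac12,\tfrac12)$. Both cones are contractible and every component $\{y\}\times(-\tfrac12,\tfrac12)$ of the overlap is contractible, so this is a relaxed good cover. Mayer--Vietoris gives singular $H^1(X;\Z/2)\cong(\Z/2)^{Y}/\{\text{constants}\}$, which has uncountable dimension. The \v{C}ech group of Definition~\ref{def:cech-cohomology}, built from locally constant functions on the overlap, is only countable-dimensional. So the statement as written fails for general paracompact Hausdorff spaces.

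One precision in your last paragraph. Singular cohomology splits over path components for every space. Path components coincide with connected components here simply because the components are contractible. Local connectedness is used only to make the components open, so that ``constant on each component'' agrees with ``locally constant''.
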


\begin{remark}[Nerve theorem for relaxed good covers]
\label{rem:relaxed-nerve}
Definition~\ref{def:good-cover} allows intersections to have multiple connected components, each required to be contractible. The classical Nerve Theorem still applies via refinement.
 
Given a cover $\mathcal{U} = \{U_\alpha\}_{\alpha}$ satisfying Definition~\ref{def:good-cover}, construct a refined cover $\mathcal{U}'$ by splitting each intersection $U_{\alpha_0}\cap\cdots\cap U_{\alpha_p}$ into its connected components. Each component of $\mathcal{U}'$ is contractible by assumption, and $\mathcal{U}'$ is therefore a standard good cover in the sense of \cite[Sec.~II.15]{BottTu}.
The refinement map $\mathcal{U}' \to \mathcal{U}$ induces an isomorphism on \v{C}ech cohomology. Hence, characteristic classes computed using the relaxed cover agree with those obtained from a standard good cover. In practice, cocycle computations treat each connected component of each overlap as a separate element.
\end{remark}
 
\subsection{Vector bundles and transition functions}\label{sec:vector-bundles-background}
 
We assume familiarity with real vector bundles (see \cite{milnor1974characteristic,Zakharevich2024} for textbook treatments). Here we recall the notions used directly in our constructions.
 
A rank-$k$ real vector bundle $\pi\colon E\to B$ is locally trivial: around each point of $B$ there is a neighbourhood $U$ and a fiber-preserving homeomorphism $\phi_U\colon\pi^{-1}(U)\xrightarrow{\;\sim\;} U\times\R^k$ that is linear on each fiber. When two trivializations $(U_\alpha,\phi_\alpha)$ and $(U_\beta,\phi_\beta)$ overlap, they differ by a \emph{transition function}
\[
g_{\alpha\beta}\colon U_\alpha\cap U_\beta\to\GL(k,\R), \qquad
\phi_\alpha\circ\phi_\beta^{-1}(b,v)=(b,\,g_{\alpha\beta}(b)\cdot v).
\]
On triple overlaps $U_\alpha\cap U_\beta\cap U_\gamma$, these satisfy the \emph{cocycle condition}
\[
g_{\alpha\gamma}(b)=g_{\alpha\beta}(b)\cdot g_{\beta\gamma}(b).
\]
 
Conversely, transition functions satisfying the cocycle condition completely determine a vector bundle up to isomorphism.
 
\begin{theorem}[Construction from transition functions {\cite[Example 2.19]{Zakharevich2024}}]
\label{thm:bundle-from-transitions}
Given an open cover $\{U_\alpha\}_{\alpha \in A}$ of $B$ and continuous maps 
$g_{\alpha\beta}\colon U_\alpha \cap U_\beta \to \GL(k,\R)$
satisfying $g_{\alpha\alpha}=\Id$, $g_{\alpha\beta}=g_{\beta\alpha}^{-1}$, and the cocycle condition $g_{\alpha\gamma}=g_{\alpha\beta}\cdot g_{\beta\gamma}$, there exists a rank-$k$ vector bundle $\pi \colon E \to B$ with these transition functions. The total space is
\[
E = \left( \bigsqcup_{\alpha \in A} U_\alpha \times \R^k \right)\Big/ \sim
\]
where $(b, v)_\alpha \sim (b, g_{\beta\alpha}(b) \cdot v)_\beta$ for $b \in U_\alpha \cap U_\beta$.
\end{theorem}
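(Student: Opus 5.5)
The plan is to verify directly that the quotient $E$ described in the statement is a rank-$k$ vector bundle whose local trivializations have the given $g_{\alpha\beta}$ as transition functions.

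First I check that $\sim$ is an equivalence relation on $\bigsqcup_\alpha U_\alpha\times\R^k$, where $(b,v)_\alpha\sim(b,w)_\beta$ means $b\in U_\alpha\cap U_\beta$ and $w=g_{\beta\alpha}(b)v$. Each of the three hypotheses handles one property:
\begin{itemize}
\item reflexivity follows from $g_{\alpha\alpha}=\Id$;
\item symmetry follows from $g_{\alpha\beta}=g_{\beta\alpha}^{-1}$;
\item transitivity uses the cocycle condition. If $(b,v)_\alpha\sim(b,w)_\beta\sim(b,u)_\gamma$, then $b\in U_\alpha\cap U_\beta\cap U_\gamma$ and $u=g_{\gamma\beta}(b)g_{\beta\alpha}(b)v=g_{\gamma\alpha}(b)v$.
\end{itemize}
Give $E$ the quotient topology and define $\pi[(b,v)_\alpha]=b$. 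This is well defined, since equivalent points share the same base point. It is continuous, because its composite with the quotient map is the continuous projection on each piece.

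Next I build the local trivializations. Let $q$ be the quotient map and $\iota_\alpha=q|_{U_\alpha\times\R^k}\colon U_\alpha\times\R^k\to\pi^{-1}(U_\alpha)$. Surjectivity: any point over $b\in U_\alpha$ has some representative $(b,w)_\beta$, and this is equivalent to $(b,g_{\alpha\beta}(b)w)_\alpha$. Injectivity: two points of the $\alpha$-piece are identified only via $g_{\alpha\alpha}=\Id$, so they are equal.

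The step needing the most care is showing that $\iota_\alpha$ is a homeomorphism onto the open set $\pi^{-1}(U_\alpha)$. I would show $q$ is an open map. For $W\subset U_\beta\times\R^k$ open, the saturation $q^{-1}(q(W))$ meets each piece $U_\gamma\times\R^k$ in the set
\[
\{(b,g_{\gamma\beta}(b)w):(b,w)\in W\cap((U_\beta\cap U_\gamma)\times\R^k)\}.
\]
This set is the image of an open set under the map $(b,w)\mapsto(b,g_{\gamma\beta}(b)w)$. That map is a homeomorphism of $(U_\beta\cap U_\gamma)\times\R^k$, with continuous inverse $(b,u)\mapsto(b,g_{\beta\gamma}(b)u)$, which uses continuity of $g$ and of matrix multiplication. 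It is also open in $U_\gamma\times\R^k$, because $U_\beta\cap U_\gamma$ is open in $U_\gamma$. Hence $q(W)$ is open. It follows that $\iota_\alpha$ is a continuous open bijection, hence a homeomorphism, and $\pi^{-1}(U_\alpha)=q(U_\alpha\times\R^k)$ is open. Set $\phi_\alpha=\iota_\alpha^{-1}$.

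Then $\phi_\alpha$ is fiber-preserving by construction. To make each fiber a vector space, transport the linear structure through $\phi_\alpha$. This is independent of $\alpha$, because the change of chart $\phi_\alpha\circ\phi_\beta^{-1}(b,v)=(b,g_{\alpha\beta}(b)v)$ is linear on fibers. The same formula shows that the $g_{\alpha\beta}$ are exactly the transition functions of $E$.

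If one also wants $E$ Hausdorff, this follows when $B$ is Hausdorff. Two points in distinct fibers are separated by preimages under $\pi$ of disjoint open sets in $B$. Two points in the same fiber lie in a common trivialization and are separated there.
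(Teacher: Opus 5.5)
Your proposal is correct: it is the standard verification that the quotient $\bigl(\bigsqcup_\alpha U_\alpha\times\R^k\bigr)/\sim$ named in the statement is a rank-$k$ vector bundle. The three hypotheses give the equivalence-relation axioms, openness of the quotient map gives the local trivializations $\phi_\alpha$, and $\phi_\alpha\circ\phi_\beta^{-1}(b,v)=(b,g_{\alpha\beta}(b)v)$ matches the paper's convention. The paper gives no proof of its own beyond citing \cite[Example 2.19]{Zakharevich2024} for this same construction, so your argument fills in the omitted details along the same route.
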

 
In our autoencoder framework (Section~\ref{sec:tangent-bundle}), we construct vector bundles by obtaining transition functions from learned coordinate charts and then applying this theorem.

\begin{example}[M\"obius band {\cite[Example 2.19]{Zakharevich2024}}]\label{ex:mobius-bundle}
The M\"obius band is a rank-$1$ vector bundle (line bundle) over $S^1$. Cover $S^1$ by two overlapping arcs $U_1$ and $U_2$ whose intersection consists of two disjoint intervals $I_+$ and $I_-$ (as shown in Figure~\ref{fig:coverexample}). The transition function is
\[
g_{12}(x) = \begin{cases} 
-1 & \text{if } x \in I_- \\
+1 & \text{if } x \in I_+
\end{cases}
\]
This single sign flip distinguishes the M\"obius band from the cylinder $S^1 \times \R$.
\end{example}
 
\subsection{The first Stiefel--Whitney class and orientability}\label{sec:characteristic-classes}
 
Let $\xi$ be a real vector bundle of rank $k$ over a base space $B$. The Stiefel--Whitney classes $w_i(\xi)\in H^i(B;\Z/2)$, $i=0,\dots,k$, are cohomological invariants that depend only on the isomorphism class of $\xi$ and are characterised uniquely by standard axioms~\cite{milnor1974characteristic}. The first class $w_1(\xi)$ is the obstruction to orientability:

\begin{theorem}[Orientability and $w_1$ {\cite[Proposition~4.2]{milnor1974characteristic}}]\label{thm:orientability-w1}
A real vector bundle $\xi$ over $B$ is \emph{orientable} (i.e., its structural group reduces from $\GL_k(\R)$ to $\GL_k^+(\R)$) if and only if $w_1(\xi) = 0$.
\end{theorem}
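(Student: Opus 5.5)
The plan is to make $w_1$ concrete through the determinant line, taking the Stiefel--Whitney axioms of \cite{milnor1974characteristic} as given. We work with a good cover $\{U_\alpha\}$ of a paracompact base $B$ trivialising $\xi$, with transition functions $g_{\alpha\beta}$, and set $\omega_{\alpha\beta}=\sign(\det g_{\alpha\beta})$. Because each connected component of $U_\alpha\cap U_\beta$ is connected and $\det g_{\alpha\beta}$ is continuous and never zero, $\omega_{\alpha\beta}$ is constant on each component. The cocycle condition $g_{\alpha\gamma}=g_{\alpha\beta}g_{\beta\gamma}$ is multiplicative under $\det$, so $\{\omega_{\alpha\beta}\}$ is a \v{C}ech 1-cocycle in the sense of Definition~\ref{def:Z2-cocycles}. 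By Theorem~\ref{thm:good-cover} and Remark~\ref{rem:relaxed-nerve}, its class lives in $H^1(B;\Z/2)$.

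The first step is to identify this class with $w_1(\xi)$. The determinant line bundle $\det\xi=\Lambda^k\xi$ has transition functions $\det g_{\alpha\beta}$, and the axioms (via the splitting principle) give $w_1(\xi)=w_1(\det\xi)$. For a line bundle, the structure group $\GL_1(\R)=\R^\times$ deformation retracts onto $\{\pm1\}$ via $t\mapsto t/|t|$. Applying this retraction to the transition functions produces the sign cocycle. Line bundles are classified by $H^1(B;\Z/2)$ through exactly this sign class, and $w_1$ realises that classification isomorphism: it is natural, and it is nonzero on the tautological bundle over $\mathbb{R}P^\infty$. Hence $w_1(\xi)=[\omega]$.

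The second step proves the equivalence in two directions.

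\begin{itemize}
\item[($\Rightarrow$)] Suppose the structure group reduces to $\GL_k^+(\R)$. Then there are trivialisations whose transition functions have positive determinant, so $\omega\equiv 1$ and $w_1(\xi)=0$.
\item[($\Leftarrow$)] Suppose $w_1(\xi)=0$. Then $\omega$ is a coboundary, so there are signs $\nu_\alpha$ with $\omega_{\alpha\beta}=\nu_\alpha\nu_\beta$. Compose each trivialisation $\phi_\alpha$ with $r=\mathrm{diag}(-1,1,\dots,1)$ whenever $\nu_\alpha=-1$. The new transition functions are $r^{a}g_{\alpha\beta}r^{b}$ with determinant sign $\nu_\alpha\omega_{\alpha\beta}\nu_\beta=+1$. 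This gives a $\GL_k^+$ cocycle, and by Theorem~\ref{thm:bundle-from-transitions} it defines a bundle isomorphic to $\xi$, which is the required reduction.
\end{itemize}

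The main obstacle is the first step, namely the identification of the axiomatically defined $w_1$ with the sign cocycle class. I would handle it by reducing to line bundles through $\det$. For a line bundle $L$, the sign cocycle defines a $\Z/2$-torsor, namely the orientation double cover. It therefore corresponds to a classifying map $B\to \mathbb{R}P^\infty=B\Z/2$ that pulls back the tautological bundle to $L$. By naturality and the normalisation axiom, $w_1(L)$ equals the pullback of the generator of $H^1(\mathbb{R}P^\infty;\Z/2)$, and under the \v{C}ech--singular isomorphism that pullback is exactly $[\omega]$.
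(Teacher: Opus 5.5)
The paper gives no proof of this statement. It imports it from \cite{milnor1974characteristic} as a black box, so your argument is a genuinely different, self-contained route, and it is essentially correct. What it buys is a clean separation of the elementary content from the topological content, in the opposite order to the paper. Your second step, with no input about $w_1$, already shows that $\xi$ is orientable if and only if $\omega$ is a coboundary. Re-choosing trivialisations over the connected sets $U_\alpha$ changes $\omega$ exactly by a coboundary, and flipping by $r$ realises any coboundary. That is Corollary~\ref{cor:orientability-sign} proved by hand. All the topology then sits in your first step, which is precisely the separately cited Theorem~\ref{thm:w1-sign}. The paper goes the other way: it obtains Corollary~\ref{cor:orientability-sign} by combining this theorem with Theorem~\ref{thm:w1-sign}.

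The price of your route is the good-cover set-up. It restricts you to bases admitting good covers, which is enough for the manifolds the paper uses, rather than the unqualified ``over $B$''. Some such hypothesis is needed anyway when $w_1$ lives in singular cohomology. Pull the M\"obius bundle back along the map from the Warsaw circle to $S^1$ that collapses its limit segment: the result is a non-orientable line bundle over a space whose singular $H^1$ vanishes, so its $w_1$ is $0$.

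Two steps need one more sentence each.

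In ($\Rightarrow$), a $\GL_k^+$-reduction comes with its own trivialising cover, not your fixed good cover. You can fix this in either of two ways. On each connected $U_\alpha$, choose whichever of $\phi_\alpha$ and $r\phi_\alpha$ is orientation-preserving; the locus where $\phi_\alpha$ preserves a given orientation is open and closed. Alternatively, pass to a good refinement of the reduction's cover.

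In the identification $w_1(L)=[\omega]$, note that $\mathbb{R}P^\infty$ has no finite good cover. So the claim that the pull-back of the generator ``is exactly $[\omega]$'' must go through direct-limit \v{C}ech cohomology and naturality of the \v{C}ech--singular comparison under refinement and pull-back. For compact $B$ it is simpler to classify $\det\xi$ by a map into a finite $\mathbb{R}P^n$. The sign class of the tautological bundle there is nonzero because it restricts to the M\"obius bundle over $\mathbb{R}P^1$ (Example~\ref{ex:mobius-nonorientable}).
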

 
The class $w_1$ admits a concrete description via transition functions. Given transition functions $g_{\alpha\beta}\colon U_\alpha\cap U_\beta\to\GL(k,\R)$ relative to an open cover $\{U_\alpha\}_\alpha$ of $B$, define the \emph{sign cocycle}
\[
\omega_{\alpha\beta}(x)\coloneqq\operatorname{sign}\!\bigl(\det g_{\alpha\beta}(x)\bigr)\in\{\pm 1\}\cong\Z/2.
\]
By multiplicativity of the determinant, $\{\omega_{\alpha\beta}\}_{\alpha,\beta}$ inherits the cocycle condition $\omega_{\alpha\gamma}=\omega_{\alpha\beta}\cdot\omega_{\beta\gamma}$ from $\{g_{\alpha\beta}\}_{\alpha,\beta}$.
 
\begin{theorem}[Computation of $w_1$ {\cite[p.~148]{milnor1974characteristic}}]
\label{thm:w1-sign}
If the cover is good, the cohomology class $[\omega_{\alpha\beta}]\in\check{H}^1(\{U_\alpha\};\Z/2)$ coincides with $w_1(\xi)\in H^1(B;\Z/2)$.
\end{theorem}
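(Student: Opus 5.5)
The plan is to reduce the statement to the case of line bundles through the determinant, and then to identify $w_1$ of a line bundle with its \v{C}ech class using the axioms. The first step is to form the determinant line bundle $\det\xi=\Lambda^k\xi$. Its transition functions with respect to the same cover are the scalars $\det g_{\alpha\beta}\colon U_\alpha\cap U_\beta\to\R^\times$, and the cocycle condition for these follows from multiplicativity of the determinant. The standard formula $w_1(\xi)=w_1(\det\xi)$ comes from the splitting principle: pulling back to the flag bundle, $\xi$ splits as $L_1\oplus\cdots\oplus L_k$, so $\det\xi=L_1\otimes\cdots\otimes L_k$. The Whitney sum formula and $w_1(L\otimes L')=w_1(L)+w_1(L')$ show that both sides equal $\sum_i w_1(L_i)$. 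Injectivity of the flag-bundle pullback on cohomology then gives the identity on $B$. This reduces the theorem to $k=1$.

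Next, the transition functions are deformed to signs. On each component of $U_\alpha\cap U_\beta$, the homotopy
\[
h_t=\sign(\det g_{\alpha\beta})\,|\det g_{\alpha\beta}|^{1-t}
\]
runs through cocycles valued in $\R^\times$ and ends at $\omega_{\alpha\beta}$. Homotopic cocycles define isomorphic bundles, since bundles over $B\times[0,1]$ restrict to isomorphic bundles at the two ends when $B$ is paracompact. Hence $\det\xi$ is isomorphic to the line bundle $L_\omega$ obtained from the locally constant $\{\pm1\}$-cocycle $\omega$ via Theorem~\ref{thm:bundle-from-transitions}. Equivalently, one can think of $L_\omega$ as the double cover $P_\omega\to B$ glued by $\omega$, with $L_\omega=P_\omega\times_{\Z/2}\R$.

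The remaining claim is that $w_1(L_\omega)$ corresponds to $[\omega]$ under the isomorphism $\check H^1(\mathcal U;\Z/2)\cong H^1(B;\Z/2)$ of Theorem~\ref{thm:good-cover}, with the relaxed version handled as in Remark~\ref{rem:relaxed-nerve}. I would argue by naturality with respect to a classifying map. Because $\omega$ is locally constant, a partition of unity subordinate to the cover, together with the cocycle, gives a classifying map $f\colon B\to\R P^\infty$. This $f$ factors up to homotopy through the nerve $N(\mathcal U)$, using the map to $\R P^\infty$ determined by the simplicial cocycle $\omega$. Along this factorization, the class $[\omega]\in H^1(N;\Z/2)$ is the pullback of the generator $a\in H^1(\R P^\infty;\Z/2)$. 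Since $w_1(\gamma^1)=a$ by the normalisation axiom, naturality gives $w_1(L_\omega)=f^*a=[\omega]$ after transporting through the nerve isomorphism.

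I expect this last step to be the main obstacle: checking that the \v{C}ech-to-singular isomorphism of Theorem~\ref{thm:good-cover} is compatible with the nerve map and with the classifying map. A cleaner fallback is to detect $H^1(B;\Z/2)$ on loops, since $H^1(B;\Z/2)=\operatorname{Hom}(\pi_1 B,\Z/2)$ for path-connected $B$, applied componentwise. Both $w_1(L_\omega)$ and $[\omega]$ evaluate on a loop $\gamma$ to the monodromy of $P_\omega$ around $\gamma$. For $w_1(L_\omega)$ this is because $w_1(\gamma^*L_\omega)$ detects whether the pullback to $S^1$ is the M\"obius bundle of Example~\ref{ex:mobius-bundle}. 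For $[\omega]$, subdividing $\gamma$ into pieces lying in single charts shows its value is the product of the $\omega$'s at the successive chart changes. These two numbers agree by construction of $P_\omega$.
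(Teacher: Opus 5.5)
The paper gives no proof of this statement. It is quoted from Milnor--Stasheff and then used as a black box in Proposition~\ref{prop:main-orientability} and Theorem~\ref{thm:w1-agreement}. So you are supplying an argument rather than reconstructing one, and as an outline yours is correct:
\begin{itemize}
\item the reduction $w_1(\xi)=w_1(\det\xi)$ through the splitting principle is standard;
\item the family $\sign(\det g_{\alpha\beta})\,|\det g_{\alpha\beta}|^{1-t}$ is a cocycle for every $t$, because $x\mapsto\sign(x)|x|^{s}$ is an endomorphism of $\R^\times$;
\item detecting $w_1(L_\omega)$ on loops via the trivial/M\"obius dichotomy over $S^1$ plus the normalisation axiom is sound.
\end{itemize}
Compared with the citation, your route derives the result from the axioms. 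It also makes visible that the good-cover hypothesis enters only through the identification $\check H^1(\mathcal U;\Z/2)\cong H^1(B;\Z/2)$.

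Once you argue on loops, your first two steps are in fact dispensable. The orientation double cover of $\xi$ (equivalently of $\det\xi$) is already $P_\omega$. Moreover, $\gamma^*\xi$ is either $S^1\times\R^k$ or the M\"obius line bundle plus a trivial summand. $w_1$ distinguishes the two, and the product of the signs $\omega$ at the chart changes decides which occurs.

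The step you should not wave through is ``subdividing $\gamma$ \ldots\ shows its value is the product of the $\omega$'s''. That is a statement about the particular isomorphism of Theorem~\ref{thm:good-cover}, not something true by construction of $P_\omega$. It is the compatibility question you flagged for the classifying-map route, in a more tractable form. It closes with a short zig-zag in the \v{C}ech--singular double complex that realises that isomorphism.
\begin{itemize}
\item Extend $\omega$ by $\omega_{\alpha\alpha}=1$ and $\omega_{\beta\alpha}=\omega_{\alpha\beta}$.
\item For each point $p$, choose an index $\phi(p)$ with $p\in U_{\phi(p)}$, and set $\nu_\alpha(p)\coloneqq\omega_{\phi(p)\alpha}(p)$. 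These are arbitrary (discontinuous) functions, i.e.\ singular $0$-cochains, and $\nu_\alpha\nu_\beta=\omega_{\alpha\beta}$ pointwise by the cocycle condition.
\item The $1$-cochains $\sigma\mapsto\nu_\alpha(\sigma(0))\,\nu_\alpha(\sigma(1))$ on paths in $U_\alpha$ agree on overlaps because $\omega_{\alpha\beta}$ is locally constant. They therefore glue to a $\mathcal U$-small singular $1$-cocycle representing the image of $[\omega]$.
\item Subdivide a loop into arcs $\gamma_r\subset U_{\alpha_r}$ with junction points $p_r\in U_{\alpha_r}\cap U_{\alpha_{r+1}}$. On this loop the cocycle telescopes to $\prod_r\omega_{\alpha_r\alpha_{r+1}}(p_r)$, which is exactly the monodromy of $P_\omega$ along $\gamma$.
\end{itemize}
With that computation written out, the argument is complete.
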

 
Combining the orientability criterion with Theorem~\ref{thm:w1-sign} gives an algorithmic test.
 
\begin{corollary}[Orientability via the sign cocycle]
\label{cor:orientability-sign}
Let $\xi$ be a real vector bundle over $B$ with transition functions $\{g_{\alpha\beta}\}_{\alpha,\beta}$ and associated sign cocycle $\{\omega_{\alpha\beta}\}_{\alpha,\beta}$, relative to a good cover. Then $\xi$ is orientable if and only if $\{\omega_{\alpha\beta}\}_{\alpha,\beta}$ is a \v{C}ech coboundary, i.e., there exist locally constant functions $\nu_\alpha\colon U_\alpha\to\{\pm 1\}$ such that
$\omega_{\alpha\beta}(x)=\nu_\alpha(x)\cdot\nu_\beta(x)$ for all $x\in U_\alpha\cap U_\beta$.
\end{corollary}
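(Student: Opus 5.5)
The plan is to combine Theorem~\ref{thm:orientability-w1} with Theorem~\ref{thm:w1-sign} and the isomorphism of Theorem~\ref{thm:good-cover}. The chain of equivalences is
\[
\xi \text{ orientable} \iff w_1(\xi)=0 \iff [\omega]=0 \in \check{H}^1(\{U_\alpha\};\Z/2) \iff \{\omega_{\alpha\beta}\}_{\alpha,\beta}\text{ is a coboundary}.
\]
The first equivalence is Theorem~\ref{thm:orientability-w1}. The second uses Theorem~\ref{thm:w1-sign}, which identifies $[\omega]$ with $w_1(\xi)$ under the isomorphism of Theorem~\ref{thm:good-cover}. Since that map is an isomorphism, $[\omega]=0$ holds exactly when $w_1(\xi)=0$. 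The third equivalence is the definition of the quotient in Definition~\ref{def:Z2-cocycles}.

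First I will check that $\{\omega_{\alpha\beta}\}_{\alpha,\beta}$ is indeed a 1-cocycle in the sense of Definition~\ref{def:Z2-cocycles}. Each $g_{\alpha\beta}$ is continuous into $\GL(k,\R)$, so $\det g_{\alpha\beta}$ never vanishes and $\omega_{\alpha\beta}$ is locally constant. It is therefore constant on each connected component of $U_\alpha\cap U_\beta$, which is exactly one sign per component. The cocycle identity follows from multiplicativity of the determinant applied to $g_{\alpha\gamma}=g_{\alpha\beta}g_{\beta\gamma}$, as already noted before Theorem~\ref{thm:w1-sign}.

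Next I will reconcile the form of the coboundary condition. The corollary writes $\omega_{\alpha\beta}(x)=\nu_\alpha(x)\nu_\beta(x)$ with locally constant $\nu_\alpha$, whereas Definition~\ref{def:Z2-cocycles} uses constants $\nu_\alpha$. The good cover hypothesis makes each $U_\alpha$ contractible, hence connected, so a locally constant $\nu_\alpha$ is constant and the two formulations agree. In $\{\pm1\}$ we have $\nu_\alpha^{-1}=\nu_\alpha$, so the multiplicative form $\nu_\alpha\nu_\beta$ is the correct coboundary formula, with no need for inverses.

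The main obstacle is the relaxed notion of good cover, in which intersections may be disconnected. Theorem~\ref{thm:w1-sign} and Theorem~\ref{thm:good-cover} are classically stated for standard good covers. I will handle this with Remark~\ref{rem:relaxed-nerve}: pass to the refinement $\mathcal{U}'$ whose members are the connected components of the intersections. The refinement map induces an isomorphism on \v{C}ech cohomology, and the sign cocycle pulls back to the sign cocycle of the restricted transition functions. Vanishing of $[\omega]$ can then be tested on either cover. For the direction ``$\xi$ orientable $\Rightarrow$ coboundary'', one could alternatively argue directly: choose an orientation, and let $\nu_\alpha$ be the sign of $\det\phi_\alpha$ relative to it on each connected $U_\alpha$. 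This gives a self-contained check of that implication.
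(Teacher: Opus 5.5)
Your proposal is correct and is essentially the paper's argument: the corollary is Theorem~\ref{thm:orientability-w1} chained with Theorem~\ref{thm:w1-sign} (via the isomorphism of Theorem~\ref{thm:good-cover}) and the definition of a coboundary. Your checks on the cocycle property and on locally constant versus constant $\nu_\alpha$ only make explicit what the paper leaves implicit.

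One caution about the relaxed case. The refinement of Remark~\ref{rem:relaxed-nerve} that you rely on does not literally give a standard good cover: in the M\"obius cover, $U_1\cap U_2=I_+\sqcup I_-$ is still a disconnected intersection. That case is better justified by Leray's theorem, since every finite intersection is a disjoint union of contractible open sets and hence $\Z/2$-acyclic. Alternatively, you can bypass it by adding the converse to your direct argument: replacing $\phi_\alpha$ by $\mathrm{diag}(\nu_\alpha,1,\dots,1)\circ\phi_\alpha$ makes every transition determinant positive, and this works over any trivializing cover.
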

 
A worked example illustrating this criterion on the M\"obius band is given in Example~\ref{ex:mobius-nonorientable} below.

\begin{example}[M\"obius band is non-orientable]\label{ex:mobius-nonorientable}
Continuing Example~\ref{ex:mobius-bundle}, the sign cocycle is $\omega^1_{12}(x)=+1$ on $I_+$ and $\omega^2_{12}(x)=-1$ on $I_-$. For this to be a coboundary, we would need $\nu_1,\nu_2\in\{+1,-1\}$ (each constant on the connected sets $U_1,U_2$) with $\omega_{12}(x)=\nu_1\cdot\nu_2$ for all $x$. This is impossible: one component requires $\nu_1\cdot\nu_2=-1$ while the other requires $\nu_1\cdot\nu_2=+1$. Therefore $w_1\neq 0$ and the M\"obius band is non-orientable. This illustrates the essential mechanism: non-orientability manifests as an obstruction to finding a consistent global sign assignment, detected by a nontrivial cohomology class.
\end{example}

\section{Atlas-Based Formulation of the Orientability Obstruction}\label{sec:atlasbasedformulation}

\subsection{Autoencoder atlases}\label{sec:autoencoders}

We work throughout with a smooth compact $d$-dimensional manifold $M$ without boundary, embedded in~$\R^N$. This is the natural setting for manifold learning, where data points lie in a high-dimensional ambient space and the manifold structure is to be discovered. The embedding provides a concrete ambient space in which encoders and decoders operate; in particular, tangent spaces and norms are inherited from~$\R^N$.

In their exact form (Definitions~\ref{def:autoencoder-chart} and~\ref{def:smooth-chart} below), autoencoder atlases are equivalent to classical smooth atlases: a smooth autoencoder chart is simply a coordinate chart together with its inverse playing the role of decoder. We introduce this reformulation because it admits a natural approximate generalization (Section~\ref{sec:approximateautoencoders}): replacing the exact reconstruction condition $D_i \circ E_i = \mathrm{Id}_{U_i}$ by an $\varepsilon$-approximate one yields the approximate autoencoder atlas, which is the object that arises from neural network training and for which the stability theory of Section~\ref{sec:stability} applies.

As motivated in~\cite{schonsheck2020chartautoencodersmanifoldstructured}, multi-chart latent spaces are necessary for representing manifolds with non-trivial topology. A single global coordinate system cannot exist for manifolds such as the sphere or projective plane; instead, one must work with a collection of local coordinate charts. This will be extended in Section~\ref{sec:obstruction}.

\begin{definition}[Autoencoder chart]\label{def:autoencoder-chart}
An \emph{autoencoder chart} on a smooth manifold $M \subset \R^N$ is a triple $(U, E, D)$ where:
\begin{enumerate}
\item $U \subset M$ is open;
\item $E \colon U \to Z \subset \R^d$ is a continuous injective map (the \emph{encoder});
\item $D \colon Z \to M$ is continuous (the \emph{decoder});
\item $D \circ E = \mathrm{Id}_{U}$ (the \emph{reconstruction condition}).
\end{enumerate}
\end{definition}

In practice, the reconstruction condition is approximated through optimisation (see Remark~\ref{rem:idealization}).

\begin{definition}[Smooth autoencoder chart]\label{def:smooth-chart}
A \emph{smooth autoencoder chart} is an autoencoder chart $(U, E, D)$ (Definition~\ref{def:autoencoder-chart}) in which $E$ and $D$ are smooth ($C^\infty$) and $E \colon U \to Z$ is a diffeomorphism onto its image. In particular, $D|_Z = E^{-1}$, so a smooth autoencoder chart is precisely a smooth coordinate chart $(U, E)$ in the classical sense, equipped with its inverse as a decoder.
\end{definition}

\begin{remark}[$C^r$ autoencoder charts]\label{rem:Cr-chart}
More generally, for $r \geq 1$, a \emph{$C^r$ autoencoder chart} is an autoencoder chart in which $E$ and $D$ are $C^r$ and $E \colon U \to Z$ is a $C^r$-diffeomorphism onto its image. A \emph{$C^r$ autoencoder atlas} is an autoencoder atlas consisting of $C^r$ autoencoder charts. The stability results of Section~\ref{sec:stability} require only $r = 1$.
\end{remark}

\begin{definition}[Autoencoder atlas]\label{def:autoencoder-atlas}
An \emph{autoencoder atlas} on $M$ is a collection
\[
\mathcal{A} = \{(U_i, E_i, D_i)\}_{i \in I}
\]
of autoencoder charts such that $\mathcal{U} = \{U_i\}_{i \in I}$ is a cover of $M$.
\end{definition}

\begin{figure}[h!]
    \centering
    \begin{tikzpicture}
    % Functions i
    \path[->] (0.8, 0) edge [bend right] node[left, xshift=-2mm] {$E_i$} (-1, -2.9);
    \draw[white,fill=white] (0.15,-0.4) circle (.1cm);
    \path[->] (-0.7, -3.05) edge [bend right] node [right, yshift=-3mm] {$D_i$} (1.093, -0.11);
    \draw[white, fill=white] (1.1,-0.7) circle (.1cm);
    % Functions j
    \path[->] (5.8, -2.8) edge [bend left] node[midway, xshift=-5mm, yshift=-3mm] {$D_j$} (3.8, -0.35);
    \draw[white, fill=white] (3.8,-0.6) circle (.1cm);
    \path[->] (4.2, 0) edge [bend left] node[right, xshift=2mm] {$E_j$} (6.2, -2.8);
    \draw[white, fill=white] (4.4,-0.03) circle (.1cm);
    % Manifold
    \draw[smooth cycle, tension=0.5, fill=white, pattern color=brown, pattern=north west lines, opacity=0.7] plot coordinates{(2,2) (-0.5,0) (3,-1) (5,1)} node at (3,2.3) {$M$};
    % Subsets
    \draw[smooth cycle, pattern color=orange, pattern=crosshatch dots]
        plot coordinates {(1,0) (1.5, 1.2) (2.5,1.3) (2.6, 0.4)}
        node [label={[label distance=-0.3cm, xshift=-2cm, fill=white]:$U_i$}] {};
    \draw[smooth cycle, pattern color=blue, pattern=crosshatch dots]
        plot coordinates {(4, 0) (3.7, 0.8) (3.0, 1.2) (2.5, 1.2) (2.2, 0.8) (2.3, 0.5) (2.6, 0.3) (3.5, 0.0)}
        node [label={[label distance=-0.8cm, xshift=.75cm, yshift=1cm, fill=white]:$U_j$}] {};
    \draw[thick, ->] (-3,-5) -- (0.2, -5) node [label=above:{$Z_i \coloneqq E_i(U_i)$}] {};
    \draw[thick, ->] (-3,-5) -- (-3, -2) node [label=right:$\R^d$] {};
    \draw[->] (0, -3.85) -- node[midway, above]{$T_{ji} \coloneqq E_j\circ D_i$} (4.5, -3.85);
    \draw[thick, ->] (5, -5) -- (8.1, -5) node [label=above:{$Z_j \coloneqq E_j(U_j)$}] {};
    \draw[thick, ->] (5, -5) -- (5, -2) node [label=right:$\R^d$] {};
    \draw[white, pattern color=orange, pattern=crosshatch dots] (-0.67, -3.06) -- +(180:0.8) arc (180:270:0.8);
    \fill[even odd rule, white] [smooth cycle] plot coordinates{(-2, -4.5) (-2, -3.2) (-0.8, -3.2) (-0.8, -4.5)} (-0.67, -3.06) -- +(180:0.8) arc (180:270:0.8);
    \draw[smooth cycle] plot coordinates{(-2, -4.5) (-2, -3.2) (-0.8, -3.2) (-0.8, -4.5)};
    \draw (-1.45, -3.06) arc (180:270:0.8);
    \draw[white, pattern color=blue, pattern=crosshatch dots] (5.7, -3.06) -- +(-90:0.8) arc (-90:0:0.8);
    \fill[even odd rule, white] [smooth cycle] plot coordinates{(7, -4.5) (7, -3.2) (5.8, -3.2) (5.8, -4.5)} (5.7, -3.06) -- +(-90:0.8) arc (-90:0:0.8);
    \draw[smooth cycle] plot coordinates{(7, -4.5) (7, -3.2) (5.8, -3.2) (5.8, -4.5)};
    \draw (5.69, -3.85) arc (-90:0:0.8);
\end{tikzpicture}
    \caption{Diagram of an atlas autoencoder. Given charts $(U_i, E_i, D_i)$ and $(U_j, E_j, D_j)$ with $U_i \cap U_j \neq \emptyset$, the transition map $T_{ji} = E_j \circ D_i$ converts latent representations between charts.}
    \label{fig:atlasautoencoder}
\end{figure}

\begin{remark}\label{rem:idealization}
The definitions above describe the \emph{ideal} mathematical structure: exact reconstruction $D_i \circ E_i = \mathrm{Id}_{U_i}$, smooth maps, and precise chart domains. In practice, these conditions are approximated through optimisation. The reconstruction condition becomes a \emph{loss function} to minimise, and the theoretical guarantees hold in the limit of perfect training.
\end{remark}

\subsubsection{Transition maps and cocycle condition}

The fundamental insight is that autoencoder atlases naturally give rise to transition maps defined from the encoding and decoding functions \cite{schonsheck2020chartautoencodersmanifoldstructured}.

\begin{definition}[Transition maps]
For an autoencoder atlas $\mathcal{A} = \{(U_i, E_i, D_i)\}$ with $U_i \cap U_j \neq \varnothing$, the \emph{transition map} from chart $i$ to chart $j$ is
\[
T_{ji} = E_j \circ D_i \colon E_i(U_i \cap U_j) \to E_j(U_i \cap U_j).
\]
\end{definition}

The transition map $T_{ji}$ converts a latent representation in chart $i$ to the corresponding representation in chart $j$: it decodes from chart $i$'s latent space back to the manifold, then encodes into chart $j$'s latent space (see Figure~\ref{fig:atlasautoencoder}).

\begin{remark}[Well-definedness of transition maps]\label{rem:transition-well-defined}
The composition $E_j\circ D_i$ is well defined on $E_i(U_i\cap U_j)$. If $z=E_i(x)$ with $x\in U_i\cap U_j$, then $D_i(z)=D_i(E_i(x))=x\in U_j$, so $E_j(D_i(z))$ is defined.
\end{remark}

\begin{proposition}\label{prop:transition-properties}
The transition maps satisfy:
\begin{enumerate}
\item $T_{ij} \circ T_{ji} = \mathrm{Id}$ on $E_i(U_i \cap U_j)$;
\item if $\mathcal{A}$ consists of smooth autoencoder charts, then each $T_{ji}$ is a diffeomorphism.
\end{enumerate}
\end{proposition}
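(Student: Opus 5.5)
Both parts are direct unwindings of the definitions. Part (1) uses only the reconstruction condition $D_i\circ E_i=\mathrm{Id}_{U_i}$ from Definition~\ref{def:autoencoder-chart}. Part (2) uses only that smooth autoencoder charts are diffeomorphisms onto their images (Definition~\ref{def:smooth-chart}).

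\textbf{Part (1).} First I will check that the composition makes sense. By Remark~\ref{rem:transition-well-defined}, $T_{ji}$ maps $E_i(U_i\cap U_j)$ into $E_j(U_i\cap U_j)$, because $D_i(E_i(x))=x\in U_j$. This is exactly the domain of $T_{ij}$. Now take $z\in E_i(U_i\cap U_j)$ and write $z=E_i(x)$ with $x\in U_i\cap U_j$; such an $x$ is unique because $E_i$ is injective. Applying reconstruction twice gives
\[
T_{ij}(T_{ji}(z)) = E_i\bigl(D_j(E_j(D_i(E_i(x))))\bigr) = E_i\bigl(D_j(E_j(x))\bigr) = E_i(x) = z.
\]
Exchanging the roles of $i$ and $j$ gives $T_{ji}\circ T_{ij}=\mathrm{Id}$ on $E_j(U_i\cap U_j)$. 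Hence $T_{ji}$ is a bijection between these two sets with inverse $T_{ij}$. I will record this two-sided statement as well, since part (2) needs it.

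\textbf{Part (2).} Here the only real care is about domains and openness. Since $U_i\cap U_j$ is open in $M$ and $E_i$ is a diffeomorphism onto the open set $Z_i$, the set $E_i(U_i\cap U_j)$ is open in $\R^d$, so smoothness of maps defined on it is meaningful. On this set, $T_{ji}=E_j\circ D_i$ agrees with $E_j\circ E_i^{-1}$, because $D_i|_{Z_i}=E_i^{-1}$. The map $E_i^{-1}$ is smooth into $M$, and $E_j$ is smooth on the open set $U_j\subset M$, so the composite is smooth. Alternatively, $D_i$ is smooth into $M\subset\R^N$, and its image lies in $U_j$. By the same argument $T_{ij}$ is smooth, and by part (1) it is a two-sided inverse of $T_{ji}$. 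Therefore $T_{ji}$ is a diffeomorphism between open subsets of $\R^d$.

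\textbf{Main obstacle.} The step needing most attention is the smoothness of $E_j\circ D_i$ as a map into $\R^d$. This requires reading "smooth on $U_j$" in the manifold sense, i.e.\ as smoothness of a map defined on the embedded submanifold $M$. If $E_j$ is instead given as a restriction of a smooth ambient map on $\R^N$, the composition is smooth directly by the chain rule. Either reading suffices, and I will state which one is used.
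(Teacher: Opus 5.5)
Your proof is correct and matches the paper's argument: the same double-reconstruction computation for $T_{ij}\circ T_{ji}$ (and symmetrically for $T_{ji}\circ T_{ij}$), then part (2) from smoothness of the compositions together with the two-sided inverse. Your remarks on the openness of $E_i(U_i\cap U_j)$ and on $D_i|_{Z_i}=E_i^{-1}$ fill in details the paper leaves implicit; they do not change the route.
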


\begin{proof}
Let $x \in U_i \cap U_j$. We compute
\begin{align*}
T_{ij}(T_{ji}(E_i(x))) &= E_i(D_j(E_j(D_i(E_i(x))))) \\
&= E_i(D_j(E_j(x))) \\
&= E_i(x).
\end{align*}
Hence $T_{ij} \circ T_{ji} = \mathrm{Id}$ on $E_i(U_i \cap U_j)$. Similarly $T_{ji} \circ T_{ij} = \mathrm{Id}$ on $E_j(U_i \cap U_j)$, so $T_{ji}$ is a bijection with inverse $T_{ij}$.

For smoothness: if $E_i, D_i, E_j, D_j$ are all smooth, then $T_{ji} = E_j \circ D_i$ is smooth as a composition of smooth maps. Since $T_{ji}$ is a smooth bijection with smooth inverse $T_{ij}$, it is a diffeomorphism.
\end{proof}

\begin{lemma}[Cocycle condition from reconstruction]\label{lemma:cocycle-from-reconstruction}
The transition maps of an autoencoder atlas satisfy the cocycle condition: for all $i, j, k$ with $U_i \cap U_j \cap U_k \neq \emptyset$,
\[
T_{ki} = T_{kj} \circ T_{ji} \quad \text{on } E_i(U_i \cap U_j \cap U_k).
\]
\end{lemma}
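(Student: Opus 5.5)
The plan is a direct pointwise computation, in the same spirit as the proof of Proposition~\ref{prop:transition-properties}. The only tool needed is the reconstruction condition $D_j\circ E_j=\mathrm{Id}_{U_j}$ from Definition~\ref{def:autoencoder-chart}. Every point of the domain $E_i(U_i\cap U_j\cap U_k)$ can be written as $z=E_i(x)$ with $x\in U_i\cap U_j\cap U_k$, so I will fix such an $x$ and evaluate both sides of the claimed identity at $z$.

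The first step is to check that both sides are defined at $z$, using the argument of Remark~\ref{rem:transition-well-defined}. For the left-hand side, $D_i(z)=D_i(E_i(x))=x\in U_k$, so $T_{ki}(z)=E_k(x)$. For the right-hand side, the same remark gives $T_{ji}(z)=E_j(D_i(E_i(x)))=E_j(x)$. Since $x\in U_j\cap U_k$, the point $E_j(x)$ lies in $E_j(U_j\cap U_k)$, which is the domain of $T_{kj}$. So the composite $T_{kj}\circ T_{ji}$ is defined at $z$.

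The second step is the computation itself:
\[
T_{kj}(T_{ji}(z)) = E_k\bigl(D_j(E_j(x))\bigr) = E_k(x) = E_k\bigl(D_i(E_i(x))\bigr) = T_{ki}(z).
\]
The reconstruction condition is used for chart $j$ in the second equality and for chart $i$ in the third. Since $z$ was arbitrary, this gives $T_{ki}=T_{kj}\circ T_{ji}$ on $E_i(U_i\cap U_j\cap U_k)$.

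I do not expect any genuine obstacle here. The only point needing care is the domain bookkeeping: checking that $T_{ji}$ maps the triple-overlap image into the domain of $T_{kj}$. That check is exactly where membership of $x$ in all three sets is used.

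It is also worth recording two consequences of the same computation. Taking $k=i$ recovers Proposition~\ref{prop:transition-properties}(1). Taking $i=j$ gives $T_{ii}=\mathrm{Id}$ on $E_i(U_i)$. Together these supply the normalisation hypotheses $g_{\alpha\alpha}=\Id$ and $g_{\alpha\beta}=g_{\beta\alpha}^{-1}$ that Theorem~\ref{thm:bundle-from-transitions} requires later, once the transitions are linearised.
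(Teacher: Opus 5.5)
Your proposal is correct and follows essentially the same route as the paper: fix $x\in U_i\cap U_j\cap U_k$, evaluate both sides at $E_i(x)$, and use the reconstruction condition for charts $i$ and $j$ to reduce each side to $E_k(x)$. Your explicit check that $T_{ji}$ maps $E_i(U_i\cap U_j\cap U_k)$ into the domain $E_j(U_j\cap U_k)$ of $T_{kj}$ is a small addition. The paper leaves this implicit, relying on the argument of Remark~\ref{rem:transition-well-defined}.
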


\begin{proof}
Let $x \in U_i \cap U_j \cap U_k$. Both sides agree when evaluated at $E_i(x)$:
\begin{align*}
T_{ki}(E_i(x)) &= E_k(D_i(E_i(x))) = E_k(x), \\
T_{kj}(T_{ji}(E_i(x))) &= T_{kj}(E_j(D_i(E_i(x)))) = T_{kj}(E_j(x)) \\
&= E_k(D_j(E_j(x))) = E_k(x). \qedhere
\end{align*}
\end{proof}

\subsection{The tangent bundle of an autoencoder atlas}\label{sec:tangent-bundle}

To access the theory of characteristic classes, we construct a vector bundle from our autoencoder atlas. We do this by linearising the non-linear transition maps.

\subsubsection{Linearisation}

\begin{definition}[Linearised transition functions]
Let $\mathcal{A} = \{(U_i, E_i, D_i)\}$ be a smooth autoencoder atlas. For $x \in U_i \cap U_j$, define the \emph{linearised transition function}
\[
g_{ji}(x) \coloneqq d(T_{ji})_{E_i(x)} = \frac{\partial(E_j \circ D_i)}{\partial z}\Big|_{z=E_i(x)} \in \GL_d(\R),
\]
the Jacobian matrix of the transition map $T_{ji}$ evaluated at the latent code $E_i(x)$.
\end{definition}

The linearised transition function describes how infinitesimal perturbations in chart $i$'s latent space transform to chart $j$'s latent space.

\begin{lemma}\label{lemma:linearised-cocycle}
The linearised transition functions $\{g_{ji}\}$ satisfy:
\begin{enumerate}
\item $g_{ii}(x) = \mathrm{Id}$ for all $x \in U_i$;
\item $g_{ij}(x) = g_{ji}(x)^{-1}$ for all $x \in U_i \cap U_j$;
\item the cocycle condition $g_{ki}(x) = g_{kj}(x) \cdot g_{ji}(x)$ for all $x \in U_i \cap U_j \cap U_k$.
\end{enumerate}
\end{lemma}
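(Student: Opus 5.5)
All three identities will follow by differentiating the corresponding identities between the nonlinear transition maps and applying the chain rule. The only care needed is evaluating each Jacobian at the right latent point and knowing the identities hold on an open set, so that they can be differentiated. Throughout, $\mathcal{A}$ is a smooth autoencoder atlas, so by Definition~\ref{def:smooth-chart} each $E_i\colon U_i\to Z_i$ is a diffeomorphism onto its image with $D_i|_{Z_i}=E_i^{-1}$. Consequently each set $E_i(U_i\cap U_j\cap U_k)$ is open in $\R^d$, being the diffeomorphic image of an open subset of $M$.

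For (1), the reconstruction condition gives $T_{ii}=E_i\circ D_i=\Id$ on the open set $Z_i=E_i(U_i)$. Its derivative at $E_i(x)$ is therefore the identity, so $g_{ii}(x)=\Id$.

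For (3), Lemma~\ref{lemma:cocycle-from-reconstruction} gives $T_{ki}=T_{kj}\circ T_{ji}$ on the open set $E_i(U_i\cap U_j\cap U_k)$, and we differentiate this at $z=E_i(x)$. The chain rule yields
\[
d(T_{ki})_{E_i(x)} = d(T_{kj})_{T_{ji}(E_i(x))}\, d(T_{ji})_{E_i(x)} .
\]
Remark~\ref{rem:transition-well-defined} gives $T_{ji}(E_i(x))=E_j(D_i(E_i(x)))=E_j(x)$. Hence the first factor is exactly $g_{kj}(x)$ by definition, and the identity reads $g_{ki}(x)=g_{kj}(x)\,g_{ji}(x)$.

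For (2), apply (3) with $k=i$ and then use (1): $g_{ij}(x)\,g_{ji}(x)=g_{ii}(x)=\Id$. Symmetrically, $g_{ji}(x)\,g_{ij}(x)=\Id$. Both products are $d\times d$ matrices, so each factor is invertible and $g_{ij}(x)=g_{ji}(x)^{-1}$. As a by-product this shows each $g_{ji}(x)$ lies in $\GL_d(\R)$, which the definition assumed implicitly. Alternatively, (2) follows directly by differentiating Proposition~\ref{prop:transition-properties}(1).

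The only real obstacle is ensuring the differentiated identities hold on open neighbourhoods rather than merely at points. This is why smoothness and the diffeomorphism property are needed: they make the domains $E_i(\cdots)$ open in $\R^d$, so the derivatives are determined. Once openness is established, everything reduces to the chain rule together with the base-point bookkeeping $T_{ji}(E_i(x))=E_j(x)$.
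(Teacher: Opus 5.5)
Your proof is correct and follows the paper's route: differentiate the nonlinear identities of Proposition~\ref{prop:transition-properties} and Lemma~\ref{lemma:cocycle-from-reconstruction} with the chain rule, using the base-point identity $T_{ji}(E_i(x))=E_j(x)$. The only differences are minor. You obtain (2) from (3) with $k=i$ together with (1), rather than by differentiating $T_{ij}\circ T_{ji}=\Id$ directly. You also spell out that the domains are open and that $g_{ji}(x)$ is therefore invertible, both of which the paper takes for granted.
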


\begin{proof}
Apply the chain rule to the corresponding identities for the non-linear transition maps from Proposition~\ref{prop:transition-properties} and Lemma~\ref{lemma:cocycle-from-reconstruction}.

(1) $g_{ii}(x) = d(T_{ii})_{E_i(x)} = d(\mathrm{Id})_{E_i(x)} = \mathrm{Id}$.

(2) Since $T_{ij} \circ T_{ji} = \mathrm{Id}$ on $E_i(U_i \cap U_j)$, by the chain rule
\[
\mathrm{Id} = d(T_{ij} \circ T_{ji})_{E_i(x)} = d(T_{ij})_{T_{ji}(E_i(x))} \cdot d(T_{ji})_{E_i(x)} = g_{ij}(x) \cdot g_{ji}(x),
\]
where the last equality uses $T_{ji}(E_i(x)) = E_j(x)$.

(3) Since $T_{ki} = T_{kj} \circ T_{ji}$, by the chain rule
\[
g_{ki}(x) = d(T_{kj})_{T_{ji}(E_i(x))} \cdot d(T_{ji})_{E_i(x)} = d(T_{kj})_{E_j(x)} \cdot d(T_{ji})_{E_i(x)} = g_{kj}(x) \cdot g_{ji}(x). \qedhere
\]
\end{proof}

\subsubsection{Vector bundle construction}

The linearised transition functions satisfy the hypotheses of Theorem~\ref{thm:bundle-from-transitions}, allowing us to construct a vector bundle.

\begin{definition}[Tangent bundle of an autoencoder atlas]\label{def:tangent-bundle}
Let $\mathcal{A} = \{(U_i, E_i, D_i)\}$ be a smooth autoencoder atlas. The \emph{tangent bundle} $\mathcal{T}_{\mathcal{A}}$ is the vector bundle constructed from the linearised transition functions:
\[
\mathcal{T}_{\mathcal{A}} \coloneqq \left( \bigsqcup_{i \in I} U_i \times \R^d \right)\Big/ \sim
\]
where $(x, v)_i \sim (x, g_{ji}(x) v)_j$ for $x \in U_i \cap U_j$ and $v \in \R^d$.
\end{definition}

\begin{corollary}\label{cor:tangent-bundle-is-vector-bundle}
The tangent bundle $\mathcal{T}_{\mathcal{A}}$ is a rank-$d$ vector bundle over $M$ with transition functions $\{g_{ji}\}$.
\end{corollary}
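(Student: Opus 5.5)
The plan is to apply Theorem~\ref{thm:bundle-from-transitions} directly, with base $B=M$, cover $\{U_i\}_{i\in I}$, fiber rank $k=d$, and transition functions $g_{ji}$. First, I will check the hypotheses of that theorem one at a time. The algebraic hypotheses are exactly Lemma~\ref{lemma:linearised-cocycle}. Part (1) gives $g_{ii}=\Id$, part (2) gives $g_{ij}=g_{ji}^{-1}$, and part (3) gives the cocycle identity $g_{ki}=g_{kj}\,g_{ji}$ on triple overlaps. Rewritten in the index convention of Theorem~\ref{thm:bundle-from-transitions}, this reads $g_{\alpha\gamma}=g_{\alpha\beta}g_{\beta\gamma}$. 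I will also note that the gluing relation $(x,v)_i\sim(x,g_{ji}(x)v)_j$ in Definition~\ref{def:tangent-bundle} is literally the relation in Theorem~\ref{thm:bundle-from-transitions}, so the space produced by the theorem is $\mathcal{T}_{\mathcal{A}}$ itself.

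Second, I will verify the two analytic requirements on each $g_{ji}$: it must take values in $\GL_d(\R)$, and it must be continuous as a function of $x\in U_i\cap U_j$. Invertibility follows from Proposition~\ref{prop:transition-properties}(2): $T_{ji}$ is a diffeomorphism between open subsets of $\R^d$, so its Jacobian is invertible everywhere. Alternatively, part (2) of Lemma~\ref{lemma:linearised-cocycle} exhibits the inverse explicitly.

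For continuity, I will write
\[
g_{ji}=d(T_{ji})\circ E_i|_{U_i\cap U_j}.
\]
Here $z\mapsto d(T_{ji})_z$ is continuous because $T_{ji}$ is $C^\infty$ (in fact $C^1$ suffices), and $E_i$ is continuous. The composition is therefore continuous into $\GL_d(\R)$.

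Two small points need attention. First, the domain $E_i(U_i\cap U_j)$ should be open in $\R^d$ so that the derivative is meaningful. This holds because $E_i$ is a diffeomorphism onto an open image and $U_i\cap U_j$ is open in $U_i$. Second, transition maps on different connected components of an overlap are handled uniformly, since everything here is pointwise.

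I expect the main obstacle to be justifying openness of the latent domains, since Definition~\ref{def:smooth-chart} only says ``diffeomorphism onto its image.'' I will interpret this as the classical chart condition, in which $Z$ is open in $\R^d$, and note that invariance of domain then makes $E_i(U_i\cap U_j)$ open. Finally, Theorem~\ref{thm:bundle-from-transitions} yields that $\mathcal{T}_{\mathcal{A}}\to M$, with $[x,v]_i\mapsto x$, is a rank-$d$ vector bundle whose local trivialisations over $U_i$ are the maps $[x,v]_i\mapsto(x,v)$, and whose transition functions are the $g_{ji}$.
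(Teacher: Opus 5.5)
Your proposal is correct and is essentially the paper's proof. The paper also obtains the corollary directly from Theorem~\ref{thm:bundle-from-transitions}, with the identities $g_{ii}=\Id$, $g_{ij}=g_{ji}^{-1}$ and $g_{ki}=g_{kj}g_{ji}$ supplied by Lemma~\ref{lemma:linearised-cocycle}; your extra checks (continuity of $x\mapsto g_{ji}(x)$, and openness of $E_i(U_i\cap U_j)$, which already follows from invariance of domain since $\dim M=d$) make explicit hypotheses that the paper leaves implicit.
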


\begin{proof}
Immediate from Theorem~\ref{thm:bundle-from-transitions} and Lemma~\ref{lemma:linearised-cocycle}, which shows that the linearised transition functions take values in $\GL_d(\R)$ and satisfy the cocycle condition.
\end{proof}

\begin{proposition}[Relationship to the tangent bundle]\label{prop:tangent-bundle-isomorphism}
Let $M$ be a smooth $d$-manifold and $\mathcal{A} = \{(U_i, E_i, D_i)\}$ a smooth autoencoder atlas with latent dimension $d$. If $\{(U_i, E_i)\}$ forms a smooth atlas compatible with the smooth structure of $M$, then $\mathcal{T}_{\mathcal{A}} \cong TM$.
\end{proposition}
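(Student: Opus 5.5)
I will write down an explicit bundle isomorphism $\Phi\colon TM\to\mathcal{T}_{\mathcal{A}}$ using the local trivialisations of $TM$ induced by the charts $E_i$. Then I will check that it respects the gluing relation in Definition~\ref{def:tangent-bundle}. This is the standard argument that a bundle is determined by its transition functions; the only content is to identify the transition functions of $TM$ in the atlas $\{(U_i,E_i)\}$ with the $g_{ji}$.

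\textbf{Step 1 (local trivialisations of $TM$).} Since $(U_i,E_i)$ is a smooth chart of $M$ and $E_i\colon U_i\to Z_i$ is a diffeomorphism (Definition~\ref{def:smooth-chart}), its differential gives a fiberwise linear isomorphism
\[
\psi_i\colon TM|_{U_i}\to U_i\times\R^d,\qquad (x,v)\mapsto \bigl(x,\,d(E_i)_x v\bigr),
\]
with $d(E_i)_x\colon T_xM\to\R^d$ invertible. Smoothness of $E_i$ makes $\psi_i$ a smooth (in particular continuous) trivialisation.

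\textbf{Step 2 (transition functions).} For $x\in U_i\cap U_j$ we have $\psi_j\circ\psi_i^{-1}(x,w)=(x,\,d(E_j)_x\, d(E_i)_x^{-1} w)$. I claim this matrix equals $g_{ji}(x)$. On $U_i\cap U_j$ we have $D_i=E_i^{-1}$ on $E_i(U_i\cap U_j)$, so $T_{ji}=E_j\circ E_i^{-1}$ there. The chain rule then gives
\[
g_{ji}(x)=d(E_j)_x\circ d(E_i^{-1})_{E_i(x)}=d(E_j)_x\, d(E_i)_x^{-1}.
\]
The step I expect to need the most care is justifying $d(D_i)_{E_i(x)}=d(E_i)_x^{-1}$ as maps into $T_xM$. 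Here $D_i$ is a map into $\R^N$ with image in $M$, and we use $D_i|_{Z_i}=E_i^{-1}$ on the open set $Z_i$. So the differential of $D_i$ at $E_i(x)$ takes values in $T_xM\subset\R^N$ and is exactly the inverse of $d(E_i)_x$. This is where compatibility with the smooth structure is used: it guarantees that $E_i^{-1}$ is an immersion with image in $M$ and that $d(E_i)_x$ is defined and invertible on $T_xM$.

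\textbf{Step 3 (the isomorphism).} Define $\Phi(x,v)=[(x,\,d(E_i)_x v)_i]$ for any $i$ with $x\in U_i$. This is independent of $i$, because $(x,d(E_i)_xv)_i\sim(x,g_{ji}(x)d(E_i)_xv)_j=(x,d(E_j)_xv)_j$ by Step 2. Locally, $\Phi$ is the composite of $\psi_i$ with the canonical chart $U_i\times\R^d\hookrightarrow\mathcal{T}_{\mathcal{A}}$ from Theorem~\ref{thm:bundle-from-transitions}. Hence $\Phi$ is continuous, covers the identity on $M$, and is a linear isomorphism on each fibre. A continuous fibrewise-isomorphic bundle map over the identity is a bundle isomorphism, so $\mathcal{T}_{\mathcal{A}}\cong TM$.

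Alternatively, Steps 1–2 show that $TM$ and $\mathcal{T}_{\mathcal{A}}$ have identical transition cocycles on the same cover. The uniqueness part of Theorem~\ref{thm:bundle-from-transitions} then finishes the proof directly.
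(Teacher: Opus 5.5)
Your proof is correct and follows the paper's route: the paper likewise identifies the transition functions of $TM$ in the atlas $\{(U_i,E_i)\}$ with $g_{ji}$, using the chain rule and $D_i|_{Z_i}=E_i^{-1}$, and then concludes that equal transition cocycles give isomorphic bundles. Your Step~3 writes down the explicit map $\Phi$ and checks that it respects the gluing relation, which spells out the final step the paper leaves implicit.
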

\begin{proof}
The transition functions of $TM$ with respect to the smooth atlas $\{(U_i, E_i)\}$ are given by the Jacobians of the coordinate changes $E_j \circ E_i^{-1}$. By the chain rule,
 \[
 d(E_j \circ E_i^{-1})_{E_i(x)} = d(E_j \circ D_i)_{E_i(x)} = g_{ji}(x),
 \]
where we used $D_i = E_i^{-1}$ on $Z_i$ (by the reconstruction condition $D_i \circ E_i = \mathrm{Id}_{U_i}$ and the injectivity of $E_i$ in a smooth autoencoder chart, $D_i$ restricted to $Z_i = E_i(U_i)$ is the inverse of $E_i$). Thus the transition functions of $\mathcal{T}_{\mathcal{A}}$ coincide with those of $TM$, giving an isomorphism of vector bundles.
\end{proof}

The learned bundle $\mathcal{T}_{\mathcal{A}}$ approximates $TM$ when reconstruction quality is high. The robustness of the $\mathbb{Z}/2$ sign cocycle is formalised in Section~\ref{sec:stability}. Theorem~\ref{thm:sign-cocycle-stability} establishes a global cocycle validity result under quantitative reconstruction bounds, Theorem~\ref{thm:local-stability} refines this to a per-triple local statement, and Theorem~\ref{thm:w1-agreement} shows agreement with $w_1(TM)$ when the approximate atlas is sufficiently close to a compatible exact atlas.

\subsubsection{Detecting orientability}

We now apply the theory of Stiefel--Whitney classes to detect orientability of $\mathcal{T}_{\mathcal{A}}$.

\begin{definition}[Jacobian sign cocycle]
Let $\mathcal{A} = \{(U_i, E_i, D_i)\}$ be a smooth autoencoder atlas with linearised transition functions $g_{ji}(x)$. The \emph{Jacobian sign cocycle} is
\[
\omega_{ji} \colon U_i \cap U_j \to \{\pm 1\}, \qquad \omega_{ji}(x) = \mathrm{sign}(\det g_{ji}(x)).
\]
\end{definition}

By Lemma~\ref{lemma:linearised-cocycle} and the multiplicativity of the determinant, $\{\omega_{ji}\}$ satisfies the cocycle condition and thus represents a class in $\check{H}^1(\{U_i\}; \Z/2)$.

\begin{proposition}[Orientability detection]\label{prop:main-orientability}
Let $M$ be a connected smooth manifold and let $\mathcal{A} = \{(U_i, E_i, D_i)\}_{i \in I}$ be a smooth autoencoder atlas. Assume $\{U_i\}$ forms a good cover. Then the following are equivalent:
\begin{enumerate}
\item The tangent bundle $\mathcal{T}_{\mathcal{A}}$ is orientable.
\item The first Stiefel--Whitney class vanishes: $w_1(\mathcal{T}_{\mathcal{A}}) = 0 \in H^1(M; \mathbb{Z}/2)$.
\item The Jacobian sign cocycle $\{\omega_{ji}\}$ is a \v{C}ech coboundary.
\item There exist signs $\nu_i \in \{\pm 1\}$ for each chart such that
\[
\mathrm{sign}(\det g_{ji}(x)) = \nu_j \cdot \nu_i \qquad \text{for all } x \in U_i \cap U_j.
\]
\end{enumerate}
Moreover, if $\mathcal{T}_{\mathcal{A}} \cong TM$, then these conditions are equivalent to orientability of $M$.
\end{proposition}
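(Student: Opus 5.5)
The plan is to chain together the equivalences already recorded in Section~\ref{sec:background}, applied to the bundle $\mathcal{T}_{\mathcal{A}}$ of Corollary~\ref{cor:tangent-bundle-is-vector-bundle}. By construction (Definition~\ref{def:tangent-bundle}), its transition functions relative to the cover $\{U_i\}$ are exactly the linearised transition functions $g_{ji}$. Hence its sign cocycle in the sense of Section~\ref{sec:characteristic-classes} is the Jacobian sign cocycle $\omega_{ji}$.

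The equivalence (1)$\Leftrightarrow$(2) is Theorem~\ref{thm:orientability-w1} applied to $\xi=\mathcal{T}_{\mathcal{A}}$. For (2)$\Leftrightarrow$(3), I use the good cover hypothesis in two places. Theorem~\ref{thm:w1-sign} identifies $[\omega_{ji}]\in\check{H}^1(\{U_i\};\Z/2)$ with $w_1(\mathcal{T}_{\mathcal{A}})$. Theorem~\ref{thm:good-cover}, together with Remark~\ref{rem:relaxed-nerve} for the relaxed notion of good cover, shows that the identification $\check{H}^1(\{U_i\};\Z/2)\cong H^1(M;\Z/2)$ is an isomorphism. The paracompact Hausdorff hypothesis holds because $M\subset\R^N$ is a compact manifold. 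So $w_1=0$ exactly when the class $[\omega_{ji}]$ is trivial, i.e. when $\{\omega_{ji}\}$ is a coboundary. Equivalently, one can invoke Corollary~\ref{cor:orientability-sign} directly for (1)$\Leftrightarrow$(3).

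For (3)$\Leftrightarrow$(4), I unpack Definition~\ref{def:Z2-cocycles}. Each $U_i$ is contractible, hence connected, so a locally constant $\nu_i\colon U_i\to\{\pm1\}$ is a single sign. Being a coboundary then says precisely that $\omega_{ji}(x)=\nu_j\nu_i$ on every component of $U_i\cap U_j$. Two convention issues need a brief remark. The ordering $\alpha<\beta$ is harmless because $\omega_{ij}=\omega_{ji}$ by Lemma~\ref{lemma:linearised-cocycle}(2) and $\det(g^{-1})=\det(g)^{-1}$. The sign is locally constant because $\det g_{ji}$ is continuous and nonvanishing ($g_{ji}\in\GL_d(\R)$ by Proposition~\ref{prop:transition-properties}). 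This locally constant function is what makes $\omega_{ji}$ a legitimate \v{C}ech cochain in the first place.

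For the final clause, the isomorphism $\mathcal{T}_{\mathcal{A}}\cong TM$ transfers orientability, since orientability is an isomorphism invariant (equivalently, $w_1$ is natural). Orientability of $M$ is by definition orientability of $TM$. Connectedness of $M$ is only needed so that the statement refers to a single global orientation; it plays no further role.

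I do not expect a substantive obstacle here. The one point requiring care is the relaxed good cover, where overlaps may be disconnected. For it I would rely on Remark~\ref{rem:relaxed-nerve} to pass to the refined cover, making sure that the cochain values are taken per component, as in Example~\ref{ex:mobius-nonorientable}.
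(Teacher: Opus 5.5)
Your proposal is correct and matches the paper's proof essentially step for step. The steps are: (1)$\Leftrightarrow$(2) by Theorem~\ref{thm:orientability-w1}; (2)$\Leftrightarrow$(3) by Theorem~\ref{thm:w1-sign} together with the good-cover isomorphism and the per-component treatment of Remark~\ref{rem:relaxed-nerve}, where $\omega_{ji}$ is locally constant because $\det g_{ji}$ is continuous and non-vanishing; (3)$\Leftrightarrow$(4) by unpacking the definition; and the final clause by isomorphism invariance of orientability. Your extra remarks are correct details that the paper leaves implicit or handles elsewhere (Remark~\ref{rem:index-compatibility}): the index convention via $\omega_{ji}=\omega_{ij}$, and that each contractible $U_i$ is connected, so $\nu_i$ is a single sign.
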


\begin{proof}
We consider a good cover in the sense of Definition~\ref{def:good-cover}. When intersections have multiple connected components, apply the refinement argument of Remark~\ref{rem:relaxed-nerve} and treat each connected component of each overlap as a separate element in the \v{C}ech complex; since $g_{ji}(x) \in \GL_d(\R)$ the map $x \mapsto \det g_{ji}(x)$ is continuous and never zero, so $\omega_{ji}$ is constant on each connected component by the intermediate value theorem.

$(1) \Leftrightarrow (2)$: Theorem~\ref{thm:orientability-w1}.

$(2) \Leftrightarrow (3)$: Since $\{U_i\}$ is a good cover, \v{C}ech cohomology computes singular cohomology. By Theorem~\ref{thm:w1-sign}, the sign cocycle $\{\omega_{ji}\}$ represents $w_1(\mathcal{T}_{\mathcal{A}})$. Thus $w_1(\mathcal{T}_{\mathcal{A}}) = 0$ if and only if $\{\omega_{ji}\}$ is a coboundary.

$(3) \Leftrightarrow (4)$: By definition, a \v{C}ech $1$-cocycle with values in $\Z/2 \cong \{\pm 1\}$ is a coboundary if and only if there exist $\nu_i \in \{\pm 1\}$ with $\omega_{ji} = \nu_j \cdot \nu_i$.

The final statement follows since bundle isomorphism preserves orientability.
\end{proof}

\begin{remark}[Index convention compatibility]\label{rem:index-compatibility}
In the multiplicative group $\{\pm 1\}$ we have $\omega_{ji} = \omega_{ij}$, since $\det(g_{ji}) = (\det g_{ij})^{-1}$ and $(\pm 1)^{-1} = \pm 1$. Thus the target-first indexing $\omega_{ji}(x) = \mathrm{sign}(\det g_{ji}(x))$ is compatible with the ordered coboundary convention $\omega_{\alpha\beta} = \nu_\alpha \nu_\beta$ used in Corollary~\ref{cor:orientability-sign}.
 
This identity is an algebraic consequence of the exact reconstruction condition $T_{ij} \circ T_{ji} = \mathrm{Id}$ (Proposition~\ref{prop:transition-properties}). In the approximate setting of Section~\ref{sec:stability}, the composition $T_{ij} \circ T_{ji}$ deviates from the identity by $O(\varepsilon)$, and the symmetry $\omega_{ji} = \omega_{ij}$ does not follow automatically; see Remark~\ref{rem:reindexing} for the treatment in the local stability theorem.
\end{remark}

\section{Stability of the sign cocycle under approximate reconstruction}\label{sec:stability}

The theoretical framework developed in Sections~\ref{sec:autoencoders}--\ref{sec:tangent-bundle} assumes exact reconstruction: $D_i \circ E_i = \Id_{U_i}$. In practice, autoencoder training only achieves approximate reconstruction via loss minimisation. In this section, we bridge this gap by proving that the $\Z/2$-valued sign cocycle is stable under sufficiently small reconstruction error, provided the transition map Jacobians remain non-degenerate. We give two stability results: a \emph{global} theorem with uniform constants over the whole atlas (Theorem~\ref{thm:sign-cocycle-stability}), and a \emph{local} per-triple refinement (Theorem~\ref{thm:local-stability}) in which the differential reconstruction condition is required only on two of the three charts of each triangle of the nerve. The local result explains the practical detection regime observed in Section~\ref{sec:exp-higher-dim}, where global hypotheses fail but detection succeeds.

\paragraph{Notation.}
Since the manifold $M$ is embedded in $\R^N$, all norms $\|\cdot\|$ on vectors in $\R^N$ and $\R^d$ denote the Euclidean norm. For a linear map $A \colon \R^m \to \R^n$ (or its matrix representation), $\|A\|_{\mathrm{op}} \coloneqq \sup_{\|v\| = 1} \|Av\|$ denotes the \emph{operator norm} (equivalently, the largest singular value $\sigma_{\max}(A)$), and $\sigma_{\min}(A)$ denotes the smallest singular value. We write $I_N$ for the $N \times N$ identity matrix and $\Id_{T_xM}$ for the identity map on the tangent space $T_xM$. The orthogonal projection from $\R^N$ onto the normal space $(T_xM)^\perp$ is denoted $\Pi_{T_xM}^\perp$. For a $C^1$ map $f : U \subset \R^m \to \R^n$ defined on an open set $U$, the \emph{$C^1$ norm} is
\[
\|f\|_{C^1(U)} \coloneqq \sup_{x \in U} \|f(x)\| + \sup_{x \in U} \|df_x\|_{\mathrm{op}}.
\]
We write $\tau(M) > 0$ for the \emph{reach} of $M$ in $\R^N$ in the sense of \cite{Niyogi2008}: the supremum of $r > 0$ such that every point in the open $r$-tube $\{p \in \R^N : \mathrm{dist}(p, M) < r\}$ has a unique nearest point on $M$. Since $M$ is compact and smoothly embedded, $\tau(M) > 0$.

\subsection{Approximate autoencoder atlases}\label{sec:approximateautoencoders}

\begin{definition}[Approximate autoencoder atlas]\label{def:approximate-atlas}
Let $M$ be a smooth $d$-manifold embedded in $\R^N$. An \emph{$(\varepsilon, \eta)$-approximate autoencoder atlas} on $M$ is a collection $\mathcal{A} = \{(U_i, E_i, D_i)\}_{i \in I}$ where:
\begin{enumerate}
    \item $\{U_i\}$ is an open cover of $M$;
    \item each encoder $E_i$ is a $C^1$ map defined on an open neighborhood $O_i \supset U_i$ in $\R^N$, with values in $\R^d$, and each decoder $D_i \colon \tilde{Z}_i \to \R^N$ is a $C^1$ map on an open set $\tilde{Z}_i \supset E_i(O_i) \subset \R^d$;
    \item the \emph{pointwise reconstruction error} satisfies
    \[
    \sup_{x \in U_i} \| D_i(E_i(x)) - x \| \leq \varepsilon;
    \]
    \item the \emph{differential reconstruction error} satisfies
    \[
    \sup_{x \in U_i} \| d(D_i \circ E_i)_x - \Id_{T_x M} \|_{\mathrm{op}} \leq \eta,
    \]
    where $d(D_i \circ E_i)_x \colon T_x M \to T_{D_i(E_i(x))} \R^N$ is the differential of the reconstruction map restricted to $T_x M$;
    \item there exists a constant $R \geq 1$ such that the open neighborhoods
    $O_i$ contain the closed enlarged neighborhood
    \[
    \overline{B_{\R^N}(U_i,\, R\varepsilon)} \coloneqq \{p \in \R^N : \mathrm{dist}(p, U_i) \leq R\varepsilon\}.
    \]
    The role of $R$ is to ensure that all off-manifold points encountered
    in the proof of the stability theorem — in particular, points of the
    form $D_i(E_i(x))$ and $\Phi_j(D_i(E_i(x)))$ for $x \in U_i$ — lie in
    the encoder domain $O_k$ of every chart $k$ involved.  The specific
    value $R = L_E L_D + 3$ used in Theorem~\ref{thm:sign-cocycle-stability}
    is derived in Lemma~\ref{lem:off-manifold-reconstruction} and
    Remark~\ref{rem:domain-verification}.
\end{enumerate}
We call $\Phi_i \coloneqq D_i \circ E_i \colon O_i \to \R^N$ the
\emph{reconstruction map} of chart $i$.
\end{definition}

\begin{figure}[h!]
    \centering
    \begin{tikzpicture}
    \draw[smooth cycle]
    plot coordinates{
    (-7, 0.5)
    (-7, 1.8)
    (-5.8, 1.8)
    (-5.8, 0.5)
    };
    \path[->] (1.05,1.3) edge [bend right] node[left, yshift=2mm] {$E_i$} (-6.4,1.1);
    \draw[white,fill=white] (0.3,1.7) circle (.1cm);
    \path[->] (-6.4,0.9) edge [bend right] node [right, yshift=-3mm] {$D_i$} (0.8,0.8);
    \draw[white,fill=white] (0.3,0.6) circle (.1cm);
    \draw[white, fill=white] (-5.5,0.5) circle (.21cm);
\draw[smooth cycle, tension=0.5]
plot coordinates{(2,2.5) (-0.5,1) (3,0) (4,2)}
node at (3.3,2.6) {$M$};
    \draw[smooth cycle,dashed]
    plot coordinates {(0.8,0.8) (1.3, 2.4) (2.7,2.5) (2.8, 1.2)}
    node [label={[label distance=-0.8cm, xshift=0.1cm,fill=white]:$O_i$}] {};
    \draw[smooth cycle]
        plot coordinates {(1.1,1.1) (1.5, 2.2) (2.5,2.3) (2.6, 1.4)}
        node [label={[label distance=0cm, xshift=-0.5cm]:$U_i$}] {};
    \draw[thick, ->] (-8,0) -- (-4.8, 0) node [label=above:{$Z_i \coloneqq E_i(U_i)$}] {};
    \draw[thick, ->] (-8,0) -- (-8, 3) node [label=right:$\R^d$] {};
    \foreach \Point in {(1.2,1.2),(0.9,0.9),(-6.5,1)}{
    \node at \Point {\textbullet};
       };
    \node at (1.4,1.2)   {$x$};
    \node at (1.1,0.9)  {$y$};
\end{tikzpicture}
    \caption{Diagram of an approximate atlas autoencoder. Given a point $x \in U_i$, we need to extend the domain to $O_i$ because $y = D_i(E_i(x))$ may not lie on $U_i$. Also, $O_i$ may not be fully in $M$, accommodating off-manifold points.}
    \label{fig:aprox-atlasautoencoder}
\end{figure}

Condition~(2) requires each encoder $E_i$ to be defined on an open set $O_i \supset U_i$ in $\R^N$, rather than only on $U_i \subset M$. This extension is essential in the approximate setting because the decoder output $y = D_i(E_i(x))$ satisfies $\|y - x\| \leq \varepsilon$ but generically $y \notin M$; subsequent encoders must be evaluable at such off-manifold points (see Figure~\ref{fig:aprox-atlasautoencoder}). In practice, this condition is automatically satisfied: neural network encoders are defined on all of $\R^N$ (or a large open subset thereof). Condition~(5) ensures more strongly that all points within distance $R\varepsilon$ of the chart domains remain in the encoder's domain; the factor $R = L_E L_D + 3$ is the bound that arises when the reconstruction map of a neighboring chart is composed with the off-manifold image of another chart's reconstruction (see Lemma~\ref{lem:off-manifold-reconstruction} and Step~0 of the proof of Theorem~\ref{thm:sign-cocycle-stability}). In practice, condition~(3) is controlled by the reconstruction loss $\mathcal{L}_{\mathrm{recon}}$. Condition~(4) is an additional regularity requirement on the derivatives; it is encouraged by the Jacobian regularity loss $\mathcal{L}_{\mathrm{jac}}$ and by the use of smooth activations (e.g., $\tanh$). When $\varepsilon$ and $\eta$ are both zero, we recover an exact autoencoder atlas (Definition~\ref{def:autoencoder-atlas}). See Section~\ref{sec:loss-functions} for the definitions of the different loss functions. In condition~(4), both $d(D_i\circ E_i)_x$ and $\mathrm{Id}_{T_xM}$ are viewed as maps $T_xM \to \mathbb{R}^N$ via the embedding $M \subset \mathbb{R}^N$.

We also require a uniform non-degeneracy condition on the transition map Jacobians.

\begin{definition}[Non-degeneracy gap]\label{def:nondegeneracy}
Let $\mathcal{A} = \{(U_i, E_i, D_i)\}$ be an approximate autoencoder atlas with transition maps $T_{ji} = E_j \circ D_i$. The \emph{non-degeneracy gap} of $\mathcal{A}$ is
\[
\delta(\mathcal{A}) \coloneqq \min_{\substack{(i,j) \\ U_i \cap U_j \neq \emptyset}}
\;\inf_{x \in U_i \cap U_j} |\det g_{ji}(x)|.
\]
We say $\mathcal{A}$ has \emph{positive non-degeneracy gap} if $\delta(\mathcal{A}) > 0$.
\end{definition}

The non-degeneracy gap $\delta > 0$ ensures that the sign function $\sign(\det g_{ji}(x))$ is well-defined (the determinant never passes through zero) and locally constant.

\begin{remark}\label{rem:sign-constancy}
Whenever the non-degeneracy gap is positive, the continuous function $x \mapsto \det g_{ji}(x)$ is bounded away from zero on each non-empty overlap, hence the sign $\omega_{ji}(x) = \mathrm{sign}(\det g_{ji}(x))$ is constant on each connected component of $U_i \cap U_j$ by the intermediate value theorem.
\end{remark}

\subsection{The global stability theorem}

The main subtlety in the approximate setting is that the non-linear cocycle condition $T_{ki} = T_{kj} \circ T_{ji}$ fails: the error depends on the reconstruction quality of the middle chart $j$, as we will make precise in Theorem~\ref{thm:cocycle-error-approx}. Consequently, the matrix-valued cocycle condition $g_{ki}(x) = g_{kj}(x) \cdot g_{ji}(x)$ also fails. However, we now show that the \emph{sign} cocycle condition holds exactly, provided the reconstruction error is small relative to the non-degeneracy gap.

The key observation is a factorisation of the Jacobians through the reconstruction map.

\begin{lemma}[Jacobian factorisation]\label{lemma:jacobian-factorisation}
Let $\mathcal{A} = \{(U_i, E_i, D_i)\}$ be a $C^1$ approximate autoencoder atlas with reconstruction maps $\Phi_j = D_j \circ E_j$. For $x \in U_i \cap U_j \cap U_k$, set $y \coloneqq D_i(E_i(x))$. Assume $y \in O_j \cap O_k$ (verified in Remark~\ref{rem:domain-verification}). Then:
\begin{enumerate}
    \item The direct transition Jacobian satisfies
    \[
    g_{ki}(x) = d(E_k)_y \cdot d(D_i)_{E_i(x)}.
    \]
    \item The composed transition Jacobian satisfies
    \[
    g_{kj}(y) \cdot g_{ji}(x) = d(E_k)_{\Phi_j(y)} \cdot d(\Phi_j)_y \cdot d(D_i)_{E_i(x)},
    \]
    where $g_{kj}(y) \coloneqq d(T_{kj})_{E_j(y)}$ denotes the Jacobian of $T_{kj}$ evaluated at the point $y$ as represented in chart $j$.
\end{enumerate}
\end{lemma}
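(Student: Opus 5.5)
The plan is to prove both identities by direct application of the chain rule. The only care needed is in identifying the base points at which each differential is evaluated. Throughout, the transition Jacobians are read with the approximate-setting convention: $g_{ji}(x) = d(T_{ji})_{E_i(x)}$, where $T_{ji} = E_j \circ D_i$ is now defined on the open set $\tilde Z_i$ wherever $D_i$ lands in $O_j$. The hypothesis $y \in O_j \cap O_k$ guarantees that every composition below is defined near the relevant point. Since $O_j$ and $O_k$ are open and $D_i$ is continuous, the composition $E_k \circ D_i$ is $C^1$ on a neighbourhood of $E_i(x)$ in $\tilde Z_i$, so its differential there makes sense.

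For part (1), write $T_{ki} = E_k \circ D_i$ near $z = E_i(x)$. The chain rule gives $d(T_{ki})_{E_i(x)} = d(E_k)_{D_i(E_i(x))} \cdot d(D_i)_{E_i(x)}$. Since $D_i(E_i(x)) = y$ by definition, this is exactly the claimed factorisation. Note that in the approximate setting $y \neq x$ in general, which is why $d(E_k)$ is evaluated at $y$ and not at $x$. This is the reason the extension of $E_k$ to $O_k$ in Definition~\ref{def:approximate-atlas} is needed.

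For part (2), first compute $g_{ji}(x) = d(E_j)_y \cdot d(D_i)_{E_i(x)}$, exactly as in part (1). Next, by definition $g_{kj}(y) = d(T_{kj})_{E_j(y)} = d(E_k)_{D_j(E_j(y))} \cdot d(D_j)_{E_j(y)}$. Here $D_j(E_j(y)) = \Phi_j(y)$, and this point must lie in $O_k$; I would take this as part of the domain verification deferred to Remark~\ref{rem:domain-verification}, which is ensured by condition (5) of Definition~\ref{def:approximate-atlas}. Multiplying the two expressions gives
\[
g_{kj}(y)\, g_{ji}(x) = d(E_k)_{\Phi_j(y)} \cdot d(D_j)_{E_j(y)} \cdot d(E_j)_y \cdot d(D_i)_{E_i(x)}.
\]
The middle product $d(D_j)_{E_j(y)} \cdot d(E_j)_y$ equals $d(\Phi_j)_y$ by the chain rule applied to $\Phi_j = D_j \circ E_j$ at $y \in O_j$. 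This yields the claimed formula.

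There is no real obstacle in this argument; it is bookkeeping. The one subtle point is part (2), where the mismatch between the base points $E_j(y)$ and $E_j(x)$ must be kept explicit. It is precisely the evaluation of $g_{kj}$ at the off-manifold point $y$ that makes the factorisation exact. We also need $\Phi_j(y) \in O_k$, which I would state alongside the assumption $y \in O_j \cap O_k$ and defer to the same remark. Comparing (1) with (2) then shows that the two Jacobians differ only through the factor $d(\Phi_j)_y$ and through the shift of base point from $y$ to $\Phi_j(y)$ in $d(E_k)$. These are the quantities that the stability theorem controls via $\eta$, $\varepsilon$, and Lipschitz bounds.
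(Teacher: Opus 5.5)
Your argument is correct and is essentially the paper's. Part (1) is the identical chain-rule computation. In part (2) you differentiate $T_{ji}$ and $T_{kj}$ separately and regroup $d(D_j)_{E_j(y)}\,d(E_j)_y = d(\Phi_j)_y$, whereas the paper first writes $T_{kj}\circ T_{ji} = E_k\circ\Phi_j\circ D_i$ and differentiates once; this is the same bookkeeping in a different order. You are also right that $\Phi_j(y)\in O_k$ is needed beyond the stated hypothesis $y\in O_j\cap O_k$, and the paper supplies it in item~2 of Remark~\ref{rem:domain-verification}, via Lemma~\ref{lem:off-manifold-reconstruction} and condition~(5) of Definition~\ref{def:approximate-atlas}.
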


\begin{proof}
(1) By definition, $T_{ki} = E_k \circ D_i$, so by the chain rule
\[
g_{ki}(x) = d(E_k \circ D_i)_{E_i(x)} = d(E_k)_{D_i(E_i(x))} \cdot d(D_i)_{E_i(x)} = d(E_k)_y \cdot d(D_i)_{E_i(x)}.
\]
Here $E_k$ is evaluated at $y = D_i(E_i(x)) \in O_k$, where it is $C^1$ by Definition~\ref{def:approximate-atlas}(2).

(2) The composition $T_{kj} \circ T_{ji} = (E_k \circ D_j) \circ (E_j \circ D_i) = E_k \circ \Phi_j \circ D_i$. By the chain rule,
\begin{align*}
d(T_{kj} \circ T_{ji})_{E_i(x)}
&= d(E_k)_{\Phi_j(D_i(E_i(x)))} \cdot d(\Phi_j)_{D_i(E_i(x))} \cdot d(D_i)_{E_i(x)} \\
&= d(E_k)_{\Phi_j(y)} \cdot d(\Phi_j)_y \cdot d(D_i)_{E_i(x)}.
\end{align*}
The left-hand side equals $g_{kj}(y) \cdot g_{ji}(x)$ by the chain rule applied to $T_{kj} \circ T_{ji}$ at $E_i(x)$.
\end{proof}

The factorisation in Lemma~\ref{lemma:jacobian-factorisation} reveals that the direct Jacobian $g_{ki}(x)$ and the composed Jacobian $g_{kj}(y) \cdot g_{ji}(x)$ differ by the insertion of the reconstruction map $\Phi_j = D_j \circ E_j$ in place of the identity. The discrepancy depends on how far $\Phi_j$ is from the identity, both in value (controlled by $\varepsilon$) and in derivative (controlled by $\eta$).

\begin{remark}[Domain verification for Lemma~\ref{lemma:jacobian-factorisation}]\label{rem:domain-verification}
In Lemma~\ref{lemma:jacobian-factorisation}, the evaluation points must lie in the extended encoder domains. We verify the two non-trivial cases.
\begin{enumerate}
    \item $y = D_i(E_i(x)) \in \R^N$ satisfies $\|y - x\| \leq \varepsilon$ by Definition~\ref{def:approximate-atlas}(3). Since $x \in U_j \cap U_k$, $\mathrm{dist}(y, U_j) \leq \varepsilon$ and $\mathrm{dist}(y, U_k) \leq \varepsilon$, so $y \in O_j \cap O_k$ by Definition~\ref{def:approximate-atlas}(5) (which provides a margin of $R\varepsilon$, $R \geq 1$).
    \item $\Phi_j(y) = D_j(E_j(y))$ satisfies $\|\Phi_j(y) - y\| \leq (L_E L_D + 2)\varepsilon$ by Lemma~\ref{lem:off-manifold-reconstruction}. Combined with $\|y - x\| \leq \varepsilon$, the triangle inequality gives $\|\Phi_j(y) - x\| \leq (L_E L_D + 3)\varepsilon = R\varepsilon$. Since $x \in U_k$, we have $\mathrm{dist}(\Phi_j(y), U_k) \leq R\varepsilon$, so $\Phi_j(y) \in O_k$ by Definition~\ref{def:approximate-atlas}(5). This is precisely the role of the constant $R$ in the definition of an approximate atlas.
\end{enumerate}
\end{remark}

\begin{lemma}[Off-manifold reconstruction bound]\label{lem:off-manifold-reconstruction}
Let $\mathcal{A}$ be an $(\varepsilon,\eta)$-approximate autoencoder atlas. If $x \in U_j$ and $y \in O_j$ satisfy $\|y - x\| \leq \varepsilon$, then
\[
\|\Phi_j(y) - y\| \leq (L_{E_j} L_{D_j} + 2)\,\varepsilon,
\]
where $L_{E_j}, L_{D_j}$ are Lipschitz constants for $E_j$ and $D_j$.
\end{lemma}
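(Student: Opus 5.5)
The plan is a triangle-inequality decomposition that routes the comparison through the on-manifold point $x$, where the pointwise reconstruction bound of Definition~\ref{def:approximate-atlas}(3) applies. We write
\[
\Phi_j(y) - y = \bigl(\Phi_j(y) - \Phi_j(x)\bigr) + \bigl(\Phi_j(x) - x\bigr) + \bigl(x - y\bigr),
\]
which is legitimate because $x \in U_j \subset O_j$ and $y \in O_j$ by hypothesis. The second and third terms are handled directly. We have $\|\Phi_j(x) - x\| \le \varepsilon$ by condition (3), since $x \in U_j$, and $\|x - y\| \le \varepsilon$ by assumption. These two terms contribute the additive $2\varepsilon$.

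For the first term we use the Lipschitz constants of the two factors of $\Phi_j = D_j \circ E_j$:
\[
\|\Phi_j(y) - \Phi_j(x)\| = \|D_j(E_j(y)) - D_j(E_j(x))\| \le L_{D_j}\,\|E_j(y) - E_j(x)\| \le L_{D_j} L_{E_j}\,\|y - x\| \le L_{E_j}L_{D_j}\,\varepsilon .
\]
Summing the three bounds gives $(L_{E_j}L_{D_j} + 2)\varepsilon$, which is the claim.

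The only delicate point is what ``Lipschitz constant'' means here, since $E_j$ and $D_j$ are merely $C^1$ on open sets.
\begin{itemize}
\item I would interpret $L_{E_j}$ as a Lipschitz constant of $E_j$ valid on a set containing the segment $[x,y]$. A natural choice is $\overline{B_{\R^N}(U_j, R\varepsilon)} \subset O_j$, on which, if convex locally, one may take $L_{E_j} = \sup \|dE_j\|_{\mathrm{op}}$ via the mean value inequality.
\item I would interpret $L_{D_j}$ as a Lipschitz constant of $D_j$ valid on the set containing the segment $[E_j(x), E_j(y)]$.
\item If these segments are not guaranteed to stay in the domains, the cleanest fix is to state the lemma with $L_{E_j}, L_{D_j}$ defined as global Lipschitz constants on the relevant domains $O_j$ and $\tilde Z_j$. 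This is what the statement implicitly assumes, and it is automatic for neural networks defined on all of $\R^N$ and $\R^d$.
\end{itemize}
I expect this bookkeeping, rather than any estimate, to be the main obstacle. Once it is settled the argument is three lines.
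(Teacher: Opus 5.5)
Your proof is correct and matches the paper's argument: insert the intermediate points $\Phi_j(x)$ and $x$, then bound the three pieces by $L_{E_j}L_{D_j}\varepsilon$, by $\varepsilon$ (condition (3)) and by $\varepsilon$ (the hypothesis). Your remarks about where the Lipschitz constants must hold are sound; the paper leaves this bookkeeping implicit by simply positing $L_{E_j}, L_{D_j}$ as Lipschitz constants.
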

\begin{proof}
Insert the intermediate points $\Phi_j(x)$ and $x$:
\[
\|\Phi_j(y) - y\| \leq \|\Phi_j(y) - \Phi_j(x)\| + \|\Phi_j(x) - x\| + \|x - y\|.
\]
The first term is at most $L_{E_j} L_{D_j} \varepsilon$, the second at most $\varepsilon$ by Definition~\ref{def:approximate-atlas}(3), the third at most $\varepsilon$ by hypothesis.
\end{proof}

We now state the main global stability result.

\begin{theorem}[Stability of the sign cocycle]\label{thm:sign-cocycle-stability}
Let $M$ be a smooth compact $d$-manifold embedded in $\R^N$ and $\mathcal{A} = \{(U_i, E_i, D_i)\}_{i \in I}$ an $(\varepsilon, \eta)$-approximate autoencoder atlas with positive non-degeneracy gap $\delta(\mathcal{A}) > 0$. Writing $\delta \coloneqq \delta(\mathcal{A})$ for brevity, assume that the following bounds hold uniformly over all charts and on the extended encoder domains $O_i$:
\begin{enumerate}[label=(\roman*)]
    \item \textbf{Encoder Lipschitz regularity:} $\| d(E_k)_p \|_{\mathrm{op}} \leq L_E$ for all $k$ and all $p \in O_k$.
    \item \textbf{Encoder derivative Lipschitz continuity:} $\| d(E_k)_p - d(E_k)_q \|_{\mathrm{op}} \leq L_E' \|p - q\|$ for all $k$ and all $p, q \in O_k$.
    \item \textbf{Decoder regularity:} $\| d(D_i)_z \|_{\mathrm{op}} \leq L_D$ for all $i$ and $z \in E_i(O_i)$.
    \item \textbf{Decoder derivative Lipschitz continuity:} $\| d(D_i)_z - d(D_i)_{z'} \|_{\mathrm{op}} \leq L_D' \|z - z'\|$ for all $i$ and $z, z' \in E_i(O_i)$.
    \item \textbf{Tubular geometry:} $\varepsilon < \tau(M)$.
\end{enumerate}
Define the sign cocycle $\omega_{ji}(x) \coloneqq \sign(\det g_{ji}(x))$ on each connected component of each non-empty overlap. Define the effective differential error
\begin{equation}\label{eq:eta-eff-def}
\eta_{\mathrm{eff}} \coloneqq \frac{(L_E L_D + 2)\,\eta}{1 - \eta} + L_\Phi'\,\varepsilon,
\qquad L_\Phi' \coloneqq L_D' L_E^2 + L_D L_E',
\end{equation}
the perturbation magnitude
\begin{equation}\label{eq:Gamma-def}
\Gamma \coloneqq L_E\,\eta_{\mathrm{eff}}\, L_D + L_E'\,\widetilde{\varepsilon}\,(1 + \eta_{\mathrm{eff}})\, L_D,
\qquad \widetilde{\varepsilon} \coloneqq (L_E L_D + 2)\varepsilon,
\end{equation}
and the determinant-stability quantity
\begin{equation}\label{eq:Ldet-def}
K_{\det} \coloneqq d\,C_g\,\varepsilon\,\bigl(L_E L_D + C_g\,\varepsilon\bigr)^{d-1},
\qquad C_g \coloneqq L_E\,(L_E L_D' + L_E' L_D^2).
\end{equation}
If
\begin{equation}\label{eq:stability-condition}
\eta < 1 \qquad \text{and} \qquad \max\!\Bigl(d\,\Gamma\,(L_E L_D + \Gamma)^{d-1},\; K_{\det}\Bigr) < \delta,
\end{equation}
then $\{\omega_{ji}\}$ satisfies the \v{C}ech cocycle condition: for all $x \in U_i \cap U_j \cap U_k$,
\[
\omega_{ki}(x) = \omega_{kj}(x) \cdot \omega_{ji}(x).
\]
In particular, $\{\omega_{ji}\}$ defines a class $[\omega] \in \check{H}^1(\mathcal{U}; \Z/2)$.
\end{theorem}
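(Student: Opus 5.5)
The plan is to compare the three Jacobians $g_{ki}(x)$, $g_{kj}(x)$, $g_{ji}(x)$ through the factorisation of Lemma~\ref{lemma:jacobian-factorisation}. The sign identity will follow from two determinant-perturbation estimates, each controlled by one of the two terms in the maximum in \eqref{eq:stability-condition}. Fix $x \in U_i\cap U_j\cap U_k$ and put $y = D_i(E_i(x))$. By Remark~\ref{rem:domain-verification}, both $y$ and $\Phi_j(y)$ lie in the relevant encoder domains, so Lemma~\ref{lemma:jacobian-factorisation} applies and gives
\[
g_{ki}(x) = d(E_k)_y\, d(D_i)_{E_i(x)},\qquad g_{kj}(y)\,g_{ji}(x) = d(E_k)_{\Phi_j(y)}\, d(\Phi_j)_y\, d(D_i)_{E_i(x)}.
\]
The argument then has three steps.

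\textbf{Step 1: bound the gap between the direct and composed Jacobians.} I will write the difference of these two products as
\[
\bigl(d(E_k)_{\Phi_j(y)}-d(E_k)_y\bigr)\,d(\Phi_j)_y\,d(D_i) \;+\; d(E_k)_y\,\bigl(d(\Phi_j)_y - P\bigr)\,d(D_i),
\]
where $P$ is the appropriate identity, i.e.\ the inclusion acting on the image of $d(D_i)$.

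The first term is bounded using hypothesis (ii), Lemma~\ref{lem:off-manifold-reconstruction}, and $\|d\Phi_j\|\le 1+\eta_{\mathrm{eff}}$. This gives $L_E'\widetilde\varepsilon(1+\eta_{\mathrm{eff}})L_D$.

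The second term needs $\|d(\Phi_j)_y - \Id\|\le \eta_{\mathrm{eff}}$ at the off-manifold point $y$. To get this, I move from $x$ to $y$ using the Lipschitz continuity of $d\Phi_j = dD_j\, dE_j$, whose constant is $L_\Phi'$; this produces the $L_\Phi'\varepsilon$ contribution. The factor $(L_EL_D+2)\eta/(1-\eta)$ comes from extending the tangential bound (4) to the vectors in the image of $d(D_i)$. These vectors are only close to $T_xM$, and $\eta<1$ lets me invert $d(\Phi_i)$ on $T_xM$ to express them through tangent vectors.

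Together the two terms give $\|g_{ki}(x) - g_{kj}(y)g_{ji}(x)\|_{\mathrm{op}}\le\Gamma$. Both matrices have norm at most $L_EL_D+\Gamma$.

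\textbf{Step 2: determinant perturbation.} For $d\times d$ matrices I will use $|\det A-\det B|\le d\,\|A-B\|\,\max(\|A\|,\|B\|)^{d-1}$, proved by multilinearity along a telescoping swap of columns or singular directions. Step 1 then gives $|\det g_{ki}(x)-\det(g_{kj}(y)g_{ji}(x))|<\delta\le|\det g_{ki}(x)|$. Hence $\sign\det g_{ki}(x)=\sign\det g_{kj}(y)\cdot\sign\det g_{ji}(x)$.

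\textbf{Step 3: replace $g_{kj}(y)$ by $g_{kj}(x)$.} The two points differ by $\|y-x\|\le\varepsilon$. I will use $g_{kj}(p)=d(E_k)_{D_j(E_j(p))}\,d(D_j)_{E_j(p)}$ together with hypotheses (i)--(iv) to get a Lipschitz bound $\|g_{kj}(y)-g_{kj}(x)\|\le C_g\varepsilon$. The determinant inequality then gives a determinant difference of at most $K_{\det}<\delta$. Since $|\det g_{kj}(x)|\ge\delta$, the signs agree (here $y$ is read as a point of the encoder domain, as in Lemma~\ref{lemma:jacobian-factorisation}). Combining this with Step 2 yields $\omega_{ki}=\omega_{kj}\omega_{ji}$. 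Local constancy on components (Remark~\ref{rem:sign-constancy}) then makes $\{\omega_{ji}\}$ a Čech cocycle.

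\textbf{Main obstacle.} I expect the hardest part to be the $\eta_{\mathrm{eff}}$ estimate in Step 1. Condition (4) controls $d\Phi_j$ only on $T_xM$ and only at on-manifold points, whereas we need control at $y\notin M$ on vectors from $\mathrm{im}\, d(D_i)$. My first approach is to decompose $v = d(D_i)w$ into its $T_xM$ component and its normal component via $\Pi^\perp_{T_xM}$. I will bound the normal component by noting that $d(D_i)w = d\Phi_i(u) + (\text{error})$ for some $u\in T_xM$, obtained by solving with the Neumann series allowed by $\eta<1$; this is where $(L_EL_D+2)$ arises. Hypothesis (v), $\varepsilon<\tau(M)$, keeps the nearest-point projection well defined, so the tangent spaces used are unambiguous.
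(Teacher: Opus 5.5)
Your proposal is correct and follows essentially the same route as the paper: the same factorisation $g_{kj}(y)g_{ji}(x)=g_{ki}(x)+Q\Delta_R P+\Delta_Q(I_N+\Delta_R)P$, the same tangent--normal decomposition of $w=Pv$ (using $\eta<1$ to write $w=d(\Phi_i)_x u$ with $u\in T_xM$), the same column-wise determinant perturbation bound, and the same Lipschitz constant $C_g$ for the evaluation-point shift. One correction: your bound $\|d\Phi_j\|\le 1+\eta_{\mathrm{eff}}$ holds only for the restriction to $\mathrm{range}(d(D_i)_{E_i(x)})$, not as a global operator norm, but the restricted bound is all the first term needs.

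The only real difference is the last step. You compare $\det g_{kj}(y)$ with $\det g_{kj}(x)$ directly at the endpoint, whereas the paper follows the determinant along the segment $[x,y]$. Since $K_{\det}<\delta\le|\det g_{kj}(x)|$, your endpoint comparison already forces sign agreement. The path argument, and with it the appeal to condition (v), is therefore unnecessary beyond keeping $[x,y]\subset O_j\cap O_k$, which Definition~\ref{def:approximate-atlas}(5) already guarantees.
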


\begin{proof}[Proof sketch (full proof in Appendix~\ref{proof:sign-cocycle-stability})]
Fix $x \in U_i \cap U_j \cap U_k$ and set $y \coloneqq D_i(E_i(x))$. By Lemma~\ref{lemma:jacobian-factorisation}, the direct and composed transition Jacobians factor as $g_{ki}(x) = QP$ and $g_{kj}(y) \cdot g_{ji}(x) = Q'RP$, where $R = d(\Phi_j)_y$ is the differential of the reconstruction map. This gives $g_{kj}(y) \cdot g_{ji}(x) = g_{ki}(x) + \Delta$ for an explicit perturbation $\Delta$.

The main difficulty is that $\|R - I_N\|_{\mathrm{op}} \geq 1$ since $\Phi_j$ factors through $\R^d$ and annihilates normal directions. However, only the restricted action of $R - I_N$ on the nearly tangential subspace $\mathrm{range}(P)$ enters the bound on $\|\Delta\|_{\mathrm{op}}$. A tangent--normal decomposition exploiting the approximate reconstruction condition yields $\|\Delta\|_{\mathrm{op}} \leq \Gamma$, where $\Gamma = O(\eta + \varepsilon)$ with explicit constants.

A determinant perturbation bound then gives $|\det(g_{ki}(x) + \Delta) - \det g_{ki}(x)| < \delta \leq |\det g_{ki}(x)|$, forcing sign agreement. A Lipschitz continuity argument along the segment $[x, y]$, using condition~(v) and Definition~\ref{def:approximate-atlas}(5) to keep the segment in $O_j \cap O_k$, corrects the evaluation point from $y$ back to $x$.
\end{proof}

\begin{remark}[Role of condition~(v)]\label{rem:reach-condition}
Condition~(v) ($\varepsilon < \tau(M)$) appears only in Step~4 of the proof, where we follow the determinant along the segment from $x$ to $y = D_i(E_i(x))$. Since $\|y - x\| \leq \varepsilon$, the segment lies within the open $\varepsilon$-tube around $M$; condition~(v) guarantees that this tube is regular (every point has a unique nearest point on $M$), and combined with Definition~\ref{def:approximate-atlas}(5) ensures that the segment lies in $O_j \cap O_k$ where the relevant Jacobians are $C^1$. The reach $\tau(M)$ depends only on the geometry of the embedding and is uniformly positive for compact smoothly embedded manifolds.
\end{remark}

\begin{remark}[Simplified sufficient condition]\label{rem:simplified-stability}
In the regime of small perturbations ($\varepsilon, \eta \ll 1$), we have $\eta_{\mathrm{eff}} \approx (L_E L_D + 2)\eta + L_\Phi'\varepsilon$, $\widetilde{\varepsilon} \approx \varepsilon$, and the leading-order terms in~\eqref{eq:stability-condition} are
\[
d \cdot L_E^{d-1} L_D^d \cdot \bigl[(L_E L_D + 2)\, L_E\, \eta \;+\; (L_\Phi' L_E + L_E')\,\varepsilon\bigr] < \delta.
\]
\end{remark}

\begin{remark}[Lipschitz constant for the reconstruction differential]\label{rem:Lphi-constant}
Conditions~(i)--(iv) yield a Lipschitz constant for $p \mapsto d(\Phi_j)_p = d(D_j)_{E_j(p)} \cdot d(E_j)_p$ via the product rule:
\[
L_\Phi' \coloneqq L_D' L_E^2 + L_D L_E'.
\]
Indeed, for $p, q \in O_j$:
\begin{align*}
\|d(\Phi_j)_p - d(\Phi_j)_q\|_{\mathrm{op}}
&\leq \|d(D_j)_{E_j(p)} - d(D_j)_{E_j(q)}\|_{\mathrm{op}}\,\|d(E_j)_p\|_{\mathrm{op}}
+ \|d(D_j)_{E_j(q)}\|_{\mathrm{op}}\,\|d(E_j)_p - d(E_j)_q\|_{\mathrm{op}} \\
&\leq L_D'\,\|E_j(p) - E_j(q)\|\,L_E + L_D\,L_E'\,\|p - q\|
\leq (L_D' L_E^2 + L_D L_E')\,\|p - q\|.
\end{align*}
\end{remark}

Neural networks with smooth activations (e.g., $\tanh$) are $C^\infty$ on $\R^N$ and hence automatically satisfy conditions~(i)--(iv) on any bounded domain.

\subsection{Local stability and per-triple sufficient conditions}\label{sec:local-stability}

Theorem~\ref{thm:sign-cocycle-stability} requires uniform bounds across the whole atlas: a single chart violating $\eta < 1$ invalidates the conclusion globally. The experiments of Section~\ref{sec:exp-higher-dim} show that this is far stronger than necessary in practice: detection succeeds when only the charts \emph{participating in a given triangle} of the nerve are well behaved. We now prove a strictly stronger theorem that captures this phenomenon.

Recall that the cocycle condition for $\{\omega_{ji}\}$ requires checking the identity $\omega_{ki}(x) = \omega_{kj}(x) \cdot \omega_{ji}(x)$ on each non-empty triple intersection $U_i \cap U_j \cap U_k$, independently for each triangle of the nerve. The proof of Theorem~\ref{thm:sign-cocycle-stability} also proceeds triangle by triangle. This suggests localising the hypotheses.

\begin{definition}[Per-triple bounds]\label{def:per-triple}
Let $\mathcal{A}$ be a $C^1$ approximate autoencoder atlas and fix an ordered triple $(i, j, k)$ with $U_i \cap U_j \cap U_k \neq \emptyset$. The \emph{per-triple bounds} for $(i, j, k)$ are the following local versions of conditions~(i)--(v) of Theorem~\ref{thm:sign-cocycle-stability}:
\begin{enumerate}[label=(\roman*)$_{ijk}$]
    \item $\| d(E_\ell)_p \|_{\mathrm{op}} \leq L_E^{(ijk)}$ for $\ell \in \{j, k\}$ and all $p$ in the relevant local domains;
    \item $\| d(E_\ell)_p - d(E_\ell)_q \|_{\mathrm{op}} \leq (L_E')^{(ijk)} \|p - q\|$ for $\ell \in \{j, k\}$;
    \item $\| d(D_\ell)_z \|_{\mathrm{op}} \leq L_D^{(ijk)}$ for $\ell \in \{i, j\}$;
    \item $\| d(D_\ell)_z - d(D_\ell)_{z'} \|_{\mathrm{op}} \leq (L_D')^{(ijk)} \|z - z'\|$ for $\ell \in \{i, j\}$.
    \item[(v)$_{ijk}$] $\varepsilon < \tau(M)$ (the same global condition).
\end{enumerate}
The \emph{per-triple non-degeneracy gap} is
\[
\delta^{(ijk)} \coloneqq \min\!\bigl(\inf_{x \in U_i \cap U_j} |\det g_{ji}(x)|,\;\inf_{x \in U_i \cap U_k} |\det g_{ki}(x)|,\;\inf_{x \in U_j \cap U_k} |\det g_{kj}(x)|\bigr).
\]
The \emph{per-triple differential reconstruction error} is
\[
\eta^{(ijk)} \coloneqq \max\!\bigl(\eta_i,\, \eta_j\bigr),
\qquad \text{where } \eta_\ell \coloneqq \sup_{x \in U_\ell} \|d(\Phi_\ell)_x|_{T_xM} - \Id_{T_xM}\|_{\mathrm{op}}.
\]
\end{definition}

Note that the chart $k$ enters the per-triple bounds only through its encoder $E_k$: its decoder $D_k$ does \emph{not} appear in the proof of Theorem~\ref{thm:sign-cocycle-stability}, and its reconstruction error $\eta_k$ does not enter $\eta^{(ijk)}$.

\begin{theorem}[Local stability of the sign cocycle]\label{thm:local-stability}
Let $M$ be a smooth compact $d$-manifold embedded in $\R^N$, and let $\mathcal{A}$ be a $C^1$ approximate autoencoder atlas. Fix an ordered triple $(i, j, k)$ with $U_i \cap U_j \cap U_k \neq \emptyset$. Suppose the per-triple bounds (i)$_{ijk}$--(v)$_{ijk}$ hold, that $\delta^{(ijk)} > 0$, and that
\begin{equation}\label{eq:local-stability-condition}
\eta^{(ijk)} < 1 \qquad \text{and} \qquad \max\!\Bigl(d\,\Gamma^{(ijk)}\,(L_E^{(ijk)} L_D^{(ijk)} + \Gamma^{(ijk)})^{d-1},\; K_{\det}^{(ijk)}\Bigr) < \delta^{(ijk)},
\end{equation}
where $\Gamma^{(ijk)}$ and $K_{\det}^{(ijk)}$ are defined as in \eqref{eq:Gamma-def} and \eqref{eq:Ldet-def} with each constant replaced by its per-triple version, and $\eta_{\mathrm{eff}}^{(ijk)}$ uses $\eta^{(ijk)}$ in place of $\eta$. Then the sign cocycle identity
\[
\omega_{ki}(x) = \omega_{kj}(x) \cdot \omega_{ji}(x)
\]
holds for all $x \in U_i \cap U_j \cap U_k$.

In particular, fix a total order on $I$ and define the \v{C}ech 
$1$-cochain by $\omega_{\alpha\beta}^{\check{C}} 
\coloneqq \omega_{\beta\alpha}$ for $\alpha < \beta$ 
(cf.\ Remark~\ref{rem:index-compatibility}). If 
\eqref{eq:local-stability-condition} holds for every ordered triple 
$(\alpha, \beta, \gamma)$ with $\alpha < \beta < \gamma$ and 
$U_\alpha \cap U_\beta \cap U_\gamma \neq \emptyset$, then 
$\{\omega_{\alpha\beta}^{\check{C}}\}$ is a \v{C}ech $1$-cocycle 
and defines a class 
$[\omega] \in \check{H}^1(\mathcal{U}; \Z/2)$.
\end{theorem}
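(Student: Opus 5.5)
The plan is to re-run the proof of Theorem~\ref{thm:sign-cocycle-stability} for a single fixed triangle $(i,j,k)$, keeping track of which chart each constant enters through. Fix $x \in U_i \cap U_j \cap U_k$ and set $y \coloneqq D_i(E_i(x))$. By Lemma~\ref{lemma:jacobian-factorisation},
\[
g_{ki}(x) = d(E_k)_y\, d(D_i)_{E_i(x)}, \qquad
g_{kj}(y)\, g_{ji}(x) = d(E_k)_{\Phi_j(y)}\, d(\Phi_j)_y\, d(D_i)_{E_i(x)},
\]
and the domain checks of Remark~\ref{rem:domain-verification} only involve $O_j$ and $O_k$. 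The maps that actually occur are therefore $E_k$ (at $y$ and at $\Phi_j(y)$), $E_j$ and $D_j$ (through $\Phi_j$), and $D_i$. This matches the per-triple bounds (i)$_{ijk}$--(iv)$_{ijk}$: encoders indexed by $\{j,k\}$ and decoders by $\{i,j\}$. The chart $k$ enters only through its encoder.

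Next I bound the perturbation $\Delta = g_{kj}(y)\,g_{ji}(x) - g_{ki}(x)$. Split it as
\[
\Delta = \bigl(d(E_k)_{\Phi_j(y)} - d(E_k)_y\bigr)\, d(\Phi_j)_y\, d(D_i) \;+\; d(E_k)_y\, \bigl(d(\Phi_j)_y - I_N\bigr)\, d(D_i).
\]
\begin{itemize}
\item The first term is at most $L_E'\widetilde{\varepsilon}(1+\eta_{\mathrm{eff}})L_D$, using Lemma~\ref{lem:off-manifold-reconstruction}.
\item For the second term, $d(\Phi_j)_y - I_N$ only has to be controlled on $\mathrm{range}\,d(D_i)_{E_i(x)}$. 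This range is close to $T_xM$ because $d(\Phi_i)_x|_{T_xM}$ is $\eta_i$-close to the identity, so $\eta_i$ enters here. The action of $d(\Phi_j)_y$ on that range is then compared with $d(\Phi_j)_x|_{T_xM}$, which is $\eta_j$-close to the identity, using $L_\Phi'\varepsilon$.
\end{itemize}
Both $\eta_i$ and $\eta_j$ appear, and both are dominated by $\eta^{(ijk)} = \max(\eta_i,\eta_j)$. Together this gives $\|\Delta\|_{\mathrm{op}} \le \Gamma^{(ijk)}$.

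Then I apply the determinant perturbation bound $|\det(A+\Delta)-\det A| \le d\|\Delta\|(\|A\|+\|\Delta\|)^{d-1}$ with $\|A\| \le L_E^{(ijk)}L_D^{(ijk)}$. The hypothesis \eqref{eq:local-stability-condition} makes this smaller than $\delta^{(ijk)} \le |\det g_{ki}(x)|$. Hence $\omega_{ki}(x) = \sign\det\bigl(g_{kj}(y)\,g_{ji}(x)\bigr) = \omega_{kj}(y)\,\omega_{ji}(x)$.

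To replace $y$ by $x$ in $\omega_{kj}$, I follow $\det g_{kj}$ along the segment $[x,y]$. This segment lies in $O_j$ by Definition~\ref{def:approximate-atlas}(5) and (v)$_{ijk}$. The Lipschitz estimate with constant $C_g$ gives a variation at most $K_{\det}^{(ijk)} < \delta^{(ijk)} \le |\det g_{kj}(x)|$, so the sign cannot change along the segment.

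I expect the main obstacle to be the tangent--normal decomposition in the second term. It must be checked carefully that it uses only $\eta_i,\eta_j$ and the local Lipschitz constants, and never $\eta_k$ or $D_k$. I would redo the appendix computation line by line, replacing each global constant by its per-triple version, and confirm that no step invokes a uniform bound on another chart. A related subtlety is that $\delta^{(ijk)}$ must cover $g_{kj}$ along $[x,y]$, not only on $U_j \cap U_k$. The gap at $x$ together with the $K_{\det}$ bound is exactly enough for this.

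For the \v{C}ech statement, the per-triple identity holds for each triangle $\alpha<\beta<\gamma$ used in the hypothesis. With the reindexing $\omega^{\check C}_{\alpha\beta} = \omega_{\beta\alpha}$, the identity for the ordered triple $(\alpha,\beta,\gamma)$ reads $\omega_{\gamma\alpha} = \omega_{\gamma\beta}\,\omega_{\beta\alpha}$, which is exactly $\omega^{\check C}_{\alpha\gamma} = \omega^{\check C}_{\alpha\beta}\,\omega^{\check C}_{\beta\gamma}$. No symmetry $\omega_{ji} = \omega_{ij}$ is needed. Local constancy on components follows from $\delta^{(ijk)} > 0$ as in Remark~\ref{rem:sign-constancy}. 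Hence $\{\omega^{\check C}_{\alpha\beta}\}$ is a \v{C}ech $1$-cocycle and defines a class $[\omega] \in \check{H}^1(\mathcal{U};\Z/2)$.
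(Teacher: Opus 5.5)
Your proposal is correct and takes the same route as the paper. You re-run Steps~0--4 of the global proof at a fixed $x \in U_i \cap U_j \cap U_k$ with per-triple constants, and observe that chart $k$ enters only through $E_k$ while only $\eta_i, \eta_j$ appear (via the tangent--normal decomposition on $\mathrm{range}\, d(D_i)_{E_i(x)}$). You then get the \v{C}ech statement triangle by triangle, and your explicit check that the ordered-triple identity is exactly $\omega^{\check C}_{\alpha\gamma} = \omega^{\check C}_{\alpha\beta}\,\omega^{\check C}_{\beta\gamma}$, with no symmetry $\omega_{ji} = \omega_{ij}$ needed, is slightly more explicit than the paper's one-line justification.
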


\begin{proof}
The proof of Theorem~\ref{thm:sign-cocycle-stability} (Appendix~\ref{proof:sign-cocycle-stability}) is carried out at a single fixed $x \in U_i \cap U_j \cap U_k$: the uniform constants $L_E, L_E', L_D, L_D', \eta, \delta$ enter only as bounds on
\begin{itemize}
    \item the encoder $E_k$ and its derivative on $O_k$ (Steps~0, 1c, 4) -- this uses (i)$_{ijk}$, (ii)$_{ijk}$ for $\ell = k$;
    \item the encoder $E_j$ and its derivative on $O_j$ (Steps~1a, 4) -- this uses (i)$_{ijk}$, (ii)$_{ijk}$ for $\ell = j$;
    \item the decoder $D_i$ and its derivative (Steps~1, 1c) -- this uses (iii)$_{ijk}$, (iv)$_{ijk}$ for $\ell = i$;
    \item the decoder $D_j$ and its derivative, via the reconstruction map $\Phi_j$ (Steps~1a, 1b, 4) -- this uses (iii)$_{ijk}$, (iv)$_{ijk}$ for $\ell = j$;
    \item the differential reconstruction conditions for charts $i$ and $j$ (Steps~1a, 1b) -- this uses $\eta^{(ijk)} = \max(\eta_i, \eta_j)$;
    \item the non-degeneracy at the three vertices of the triangle (Steps~3, 4) -- this uses $\delta^{(ijk)}$;
    \item the reach $\tau(M)$ (Step~4) -- this uses (v)$_{ijk}$.
\end{itemize}
The decoder $D_k$ and the differential reconstruction error $\eta_k$ never appear: chart $k$'s reconstruction map $\Phi_k$ is not involved in the factorisation of Lemma~\ref{lemma:jacobian-factorisation}, and $D_k$ does not appear in any Jacobian computation in the proof. Substituting the per-triple constants for the uniform constants throughout the proof yields the conclusion.
\end{proof}

\begin{remark}[Hiding a bad chart via re-indexing]\label{rem:reindexing}
The \v{C}ech cocycle condition is stated with respect to a fixed total order on the index set $I$. For $\alpha < \beta < \gamma$, the identity verified by Theorem~\ref{thm:local-stability} is
\[
\omega_{\gamma\alpha}(x) = \omega_{\beta\alpha}(x) \cdot \omega_{\gamma\beta}(x),
\]
where the chart with the largest index $\gamma$ occupies the encoder-only slot: its decoder $D_\gamma$ and reconstruction error $\eta_\gamma$ do not enter the per-triple bound.
 
Since \v{C}ech cohomology is independent of the labelling of the index set, we may re-index the charts so that any single chart $c$ with anomalously large $\eta_c$ receives the \emph{largest} index. With this choice, for every ordered triple $(\alpha, \beta, c)$ with $\alpha < \beta < c$, the per-triple differential reconstruction error is $\eta^{(\alpha,\beta,c)} = \max(\eta_\alpha, \eta_\beta)$, which involves only the regular charts. For triples not involving~$c$, all three charts are regular and the per-triple condition holds regardless of slot assignment.
 
Call a chart $\ell$ \emph{regular} if all the per-chart bounds (i)$_{ijk}$--(iv)$_{ijk}$ and the condition $\eta_\ell < 1$ hold. A single anomalous chart can be hidden in the largest-index slot of every triangle it participates in, provided the remaining two charts in each such triangle are regular. However, if two or more charts per triangle have $\eta_\ell \gg 1$, at most one can occupy the encoder-only slot, and the other must sit in the $\alpha$- or $\beta$-position where its reconstruction error enters the bound.
 
The re-indexing trick determines the sign cocycle as a \v{C}ech cochain for a particular ordering. Under the additional hypotheses of Theorem~\ref{thm:w1-agreement}, the interpolation to an exact atlas gives $\omega_{ji}^{\mathcal{A}}(x) = \omega_{ji}^{\mathcal{A}_0}(x)$ pointwise for every pair $(i,j)$, and in the exact atlas the symmetry $\omega_{ji}^{\mathcal{A}_0} = \omega_{ij}^{\mathcal{A}_0}$ holds (Remark~\ref{rem:index-compatibility}). The resulting cohomology class is therefore independent of the ordering and equals $w_1(TM)$.
\end{remark}

\begin{remark}[Asymmetric role of the three charts]\label{rem:asymmetric}
The proof of Theorem~\ref{thm:local-stability} reveals an asymmetry: only the reconstruction maps $\Phi_i$ and $\Phi_j$ enter the bound, whereas chart~$k$ contributes only through its encoder. In the global Theorem~\ref{thm:sign-cocycle-stability}, the uniform bounds allow the proof to be run for every ordering of every triple, so this asymmetry is absorbed into the global constants. In the local theorem, it is essential and can be exploited through re-indexing (Remark~\ref{rem:reindexing}): a single chart with bad reconstruction quality can be placed in the largest-index slot, where only its encoder is used, leaving the sign cocycle valid on all triangles containing it. This explains why, in our experiments (Section~\ref{sec:exp-higher-dim}), a single chart with $\eta_{\mathrm{lat},k} \gg 1$ can either preserve or destroy detection depending on how many other charts in its incident triangles are also irregular.
\end{remark}

\subsection{Agreement with the true first Stiefel--Whitney class}

Having established that the sign cocycle is a valid \v{C}ech cocycle under approximate reconstruction (globally or locally), we now show that its cohomology class agrees with $w_1(TM)$.

\begin{theorem}[Agreement with $w_1(TM)$]\label{thm:w1-agreement}
Let $M$ be a compact smooth $d$-manifold and $\{U_i\}$ a good cover of $M$. Suppose there exists an exact smooth autoencoder atlas $\mathcal{A}_0 = \{(U_i, E_i^0, D_i^0)\}$ compatible with the smooth structure of $M$, with non-degeneracy gap $\delta_0 \coloneqq \delta(\mathcal{A}_0) > 0$. Let $\mathcal{A} = \{(U_i, E_i, D_i)\}$ be a $C^1$ approximate atlas over the same cover satisfying:
\begin{enumerate}[label=(\roman*)]
    \item for each $i$ there exists an open set $\widetilde{Z}_i \subset \R^d$ containing the $\mu$-neighborhood of $Z_i^0$, on which both $D_i^0$ and $D_i$ are defined and $C^1$;
    \item $\|E_i - E_i^0\|_{C^1(U_i)} \leq \mu$ and $\|D_i - D_i^0\|_{C^1(\widetilde{Z}_i)} \leq \mu$ for all $i$;
    \item $\mathcal{A}$ has positive non-degeneracy gap.
\end{enumerate}
If
\begin{equation}\label{eq:mu-condition}
\mu < \frac{\delta_0}{2C_0},
\end{equation}
where $C_0$ depends on the number of charts $|I|$, the $C^2$-norms of the exact atlas $\mathcal{A}_0$, and the geometry of $M$, then the sign cocycles of $\mathcal{A}$ and $\mathcal{A}_0$ define the same cohomology class:
\[
[\omega^{\mathcal{A}}] = [\omega^{\mathcal{A}_0}] = w_1(TM) \quad \in \check{H}^1(\mathcal{U}; \Z/2).
\]
\end{theorem}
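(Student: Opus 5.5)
The plan is to compare the two sign cocycles pointwise rather than only at the level of cohomology. I will show that $\omega^{\mathcal{A}}_{ji}(x)=\omega^{\mathcal{A}_0}_{ji}(x)$ for every ordered pair $(i,j)$ and every $x\in U_i\cap U_j$. Equal cochains are trivially cohomologous. The identification $[\omega^{\mathcal{A}_0}]=w_1(TM)$ is already available: $\mathcal{A}_0$ is compatible with the smooth structure, so Proposition~\ref{prop:tangent-bundle-isomorphism} gives $\mathcal{T}_{\mathcal{A}_0}\cong TM$. Since $\{U_i\}$ is good, Theorem~\ref{thm:w1-sign} (via the refinement of Remark~\ref{rem:relaxed-nerve}) identifies $[\omega^{\mathcal{A}_0}]$ with $w_1(TM)$. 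Pointwise equality also shows that $\omega^{\mathcal{A}}$ is a genuine \v{C}ech cocycle, symmetric in $(i,j)$, because it inherits these properties from $\omega^{\mathcal{A}_0}$. This also justifies the closing claim of Remark~\ref{rem:reindexing}.

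The first step is to introduce the linear interpolation $E_i^t=(1-t)E_i^0+tE_i$ and $D_i^t=(1-t)D_i^0+tD_i$ for $t\in[0,1]$, together with the Jacobians $g^t_{ji}(x)=d(E_j^t)_{D_i^t(E_i^t(x))}\cdot d(D_i^t)_{E_i^t(x)}$. Hypothesis (i) guarantees that $E_i^t(x)$ stays in the $\mu$-neighbourhood of $Z_i^0$, since $\|E_i^t(x)-E^0_i(x)\|\le t\mu$, so $D_i^t$ is defined and $C^1$ there. I also need $D_i^t(E_i^t(x))$ to remain in a region where $E_j^t$ is $C^1$. This is where the geometry of $M$ (the reach $\tau(M)$ and the extended domains $O_j$) enters $C_0$: the point stays within $O(\mu)$ of $x\in U_j$.

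The second step, which is the main quantitative one, is the estimate
\[
\bigl\|g^t_{ji}(x)-g^0_{ji}(x)\bigr\|_{\mathrm{op}}\le C_1\,\mu .
\]
I obtain it by telescoping the product of differentials. Each factor changes by at most $\mu$ from hypothesis (ii). Each change of evaluation point is controlled by the Lipschitz constants of $dE^0_j$ and $dD^0_i$, and these come from the $C^2$-norms of $\mathcal{A}_0$. Feeding this into the determinant perturbation inequality already used in the proof of Theorem~\ref{thm:sign-cocycle-stability},
\[
|\det(A+\Delta)-\det A|\le d\,\|\Delta\|\,(\|A\|+\|\Delta\|)^{d-1},
\]
gives $|\det g^t_{ji}(x)-\det g^0_{ji}(x)|\le C_0\mu$. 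Taking the maximum over the finitely many pairs produces the dependence on $|I|$. Under \eqref{eq:mu-condition} this yields $|\det g^t_{ji}(x)|\ge\delta_0-C_0\mu>\delta_0/2>0$ for all $t\in[0,1]$.

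The third step uses continuity in $t$: $t\mapsto\det g^t_{ji}(x)$ is continuous and never vanishes on $[0,1]$. By the intermediate value theorem its sign at $t=1$ equals its sign at $t=0$, which gives pointwise agreement. Hypothesis (iii) is then only needed to ensure that $\omega^{\mathcal{A}}$ is locally constant on components of overlaps, as in Remark~\ref{rem:sign-constancy}. Strictly speaking, the determinant bound at $t=1$ already supplies this.

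The main obstacle I expect is a domain mismatch in step two. Hypothesis (ii) controls $E_i-E_i^0$ only on $U_i$, yet $E_j$ is evaluated at the off-manifold point $D_i(E_i(x))$. My first attempt would be to bound $\|D_i(E_i(x))-x\|\le 2\mu+L\mu$ using exact reconstruction $D_i^0E_i^0=\mathrm{Id}$. I would then replace the evaluation point by $x$ itself. That replacement costs a Lipschitz term in $dE_j$, which is controlled by the $C^2$-norm of $E_j^0$ plus the $C^1$-closeness extended to a small tube, or equivalently by interpreting the $C^1(U_i)$ norm on the open neighbourhood $O_i$. The constants from this step are absorbed into $C_0$.
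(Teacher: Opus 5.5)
Your proposal follows the paper's proof essentially step for step. Both use the linear interpolation $E_i^t, D_i^t$ and a uniform bound $|\det g^t_{ji}(x)-\det g^0_{ji}(x)|\le C_0\mu<\delta_0/2$. Both then get sign constancy in $t$ from the intermediate value theorem, hence pointwise equality of the cocycles, and identify $[\omega^{\mathcal{A}_0}]=w_1(TM)$ via Proposition~\ref{prop:tangent-bundle-isomorphism} and Theorem~\ref{thm:w1-sign}. Your telescoping derivation of $C_0$ is more careful than the paper's Step~1. So is your explicit flag that hypothesis (ii) and the extension of $E_j^0$ must be read on a tube around $U_j$, so that they make sense at the off-manifold points $D_i^t(E_i^t(x))$; the paper treats this domain issue only loosely.
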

\begin{proof}[Proof sketch (full proof in Appendix~\ref{proof:agreement})]
Define a one-parameter family of atlases by linear interpolation, $\mathcal{A}_t = \{(U_i, E_i^t, D_i^t)\}$ with $E_i^t \coloneqq (1-t)E_i^0 + tE_i$ and $D_i^t \coloneqq (1-t)D_i^0 + tD_i$ for $t \in [0,1]$. The transition Jacobians $g_{ji}^t(x)$ vary continuously with $t$. By compactness of $M$ and uniform continuity, condition \eqref{eq:mu-condition} ensures $|\det g_{ji}^t(x)| \geq \delta_0/2 > 0$ for all $t \in [0,1]$. The intermediate value theorem then forces $\sign(\det g_{ji}^t(x))$ to be constant in $t$, giving $\omega_{ji}^{\mathcal{A}} = \omega_{ji}^{\mathcal{A}_0}$ pointwise. Since $\mathcal{A}_0$ is compatible with the smooth structure, $[\omega^{\mathcal{A}_0}] = w_1(TM)$ by Theorem~\ref{thm:w1-sign}.
\end{proof}

\begin{remark}[On the constant $C_0$]\label{rem:C0-constant}
$C_0$ can be made explicit as $C_0 \leq |I|^2 \cdot \sup_{i,j,x} \|d(\det g_{ji}^t)/d\mu\|$, depending on the $C^2$ norms of the encoders and decoders. In practice, condition~\eqref{eq:stability-condition} of Theorem~\ref{thm:sign-cocycle-stability} (which is fully explicit) is typically binding.
\end{remark}

\begin{remark}[Role of compactness]
Compactness of $M$ is used to obtain uniform bounds. For non-compact manifolds, the same result holds if the regularity bounds and non-degeneracy gap hold uniformly on a compact subset containing all overlaps.
\end{remark}

\begin{remark}[Existence of a compatible exact atlas]
The existence of $\mathcal{A}_0$ over a given good cover is guaranteed by Theorem~\ref{thm:charts-from-good-covers}, since good covers automatically trivialise $TM$ (Lemma~\ref{lem:good-cover-trivializes}).
\end{remark}

\begin{corollary}[Orientability detection from learned approximate atlases]\label{cor:practical-orientability}
Let $M$ be a compact smooth $d$-manifold embedded in $\R^N$ and $\{U_i\}_{i \in I}$ a good cover of $M$. Let $\mathcal{A} = \{(U_i, E_i, D_i)\}_{i \in I}$ be an $(\varepsilon, \eta)$-approximate autoencoder atlas satisfying:
\begin{enumerate}[label=(\roman*)]
    \item \textbf{Cocycle validity:} the global hypotheses of Theorem~\ref{thm:sign-cocycle-stability}, or the per-triple hypotheses of Theorem~\ref{thm:local-stability} on every triangle, hold, so that the sign cocycle $\omega_{ji}(x) = \sign(\det g_{ji}(x))$ is a valid \v{C}ech $1$-cocycle;
    \item \textbf{Proximity to an exact atlas:} there exists an exact smooth autoencoder atlas $\mathcal{A}_0$ compatible with the smooth structure of $M$ over the same cover, with $\|E_i - E_i^0\|_{C^1} \leq \mu$, $\|D_i - D_i^0\|_{C^1} \leq \mu$, and $\mu$ satisfying \eqref{eq:mu-condition}.
\end{enumerate}
Then $M$ is orientable if and only if there exist signs $\nu_i \in \{\pm 1\}$ such that $\omega_{ji}(x) = \nu_j \cdot \nu_i$ for all $x \in U_i \cap U_j$.
\end{corollary}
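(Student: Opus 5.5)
The corollary follows by chaining the two stability results with the exact-atlas orientability criterion; nothing new needs to be estimated. First, hypothesis~(i) lets me invoke Theorem~\ref{thm:sign-cocycle-stability} (or Theorem~\ref{thm:local-stability} triangle by triangle, with the charts ordered as in Remark~\ref{rem:reindexing}). This shows that $\{\omega_{ji}\}$ is a genuine \v{C}ech $1$-cocycle on $\mathcal{U}$. The positive non-degeneracy gap makes each $\omega_{ji}$ constant on every connected component of $U_i\cap U_j$ (Remark~\ref{rem:sign-constancy}), so it defines a class $[\omega^{\mathcal{A}}]\in\check{H}^1(\mathcal{U};\Z/2)$.

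Second, hypothesis~(ii) lets me apply Theorem~\ref{thm:w1-agreement} with the exact atlas $\mathcal{A}_0$. The remaining hypotheses of that theorem need to be checked. The positive gap of $\mathcal{A}$ is part of~(i). The gap $\delta_0>0$ of $\mathcal{A}_0$ holds automatically on compact overlaps, since a smooth compatible atlas has invertible transition Jacobians; if the overlaps are not precompact, I would instead read $\delta_0>0$ as part of what is meant by ``compatible exact atlas''. The domain condition on $\widetilde Z_i$ is implicit in the $C^1$ closeness assumption. The proof of Theorem~\ref{thm:w1-agreement} gives more than equality of classes: it gives the pointwise identity $\omega^{\mathcal{A}}_{ji}=\omega^{\mathcal{A}_0}_{ji}$ for every pair. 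By Proposition~\ref{prop:tangent-bundle-isomorphism} we have $\mathcal{T}_{\mathcal{A}_0}\cong TM$. By Theorem~\ref{thm:w1-sign}, applied over the good cover, $[\omega^{\mathcal{A}_0}]=w_1(TM)$. The identity $\omega_{ji}=\omega_{ij}$ holds in $\mathcal{A}_0$ (Remark~\ref{rem:index-compatibility}), so the ordered \v{C}ech cochain is well defined independently of the ordering.

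Third, I apply Proposition~\ref{prop:main-orientability} to $\mathcal{A}_0$, whose bundle is isomorphic to $TM$. This says $M$ is orientable if and only if $\omega^{\mathcal{A}_0}$ is a coboundary, that is, if and only if there are signs $\nu_i$ with $\omega^{\mathcal{A}_0}_{ji}(x)=\nu_j\nu_i$ on every overlap. Replacing $\omega^{\mathcal{A}_0}$ by $\omega^{\mathcal{A}}$ using the pointwise equality gives exactly the stated criterion.

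The main obstacle is bookkeeping rather than analysis. One point is that the coboundary condition must hold on each connected component of each overlap. Because the $\nu_i$ are constants on the connected sets $U_i$ (a good cover has contractible, hence connected, $U_i$), the condition ``for all $x\in U_i\cap U_j$'' is exactly the componentwise coboundary condition of Remark~\ref{rem:relaxed-nerve}. The other point is that the cocycle obtained from the local theorem depends on the chosen ordering. I would resolve this by relying on the pointwise agreement with the symmetric exact cocycle, as noted at the end of Remark~\ref{rem:reindexing}. Connectedness of $M$, needed in Proposition~\ref{prop:main-orientability}, can be assumed, or the argument run componentwise.
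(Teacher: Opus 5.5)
Your proposal is correct and follows the paper's proof: (i) supplies the \v{C}ech cocycle, (ii) together with Theorem~\ref{thm:w1-agreement} identifies it with $w_1(TM)$, and the orientability criterion finishes. The differences are small. You transport the coboundary criterion from $\mathcal{A}_0$ through the pointwise identity $\omega^{\mathcal{A}}_{ji}=\omega^{\mathcal{A}_0}_{ji}$ and Proposition~\ref{prop:main-orientability}, whereas the paper uses the class equality $[\omega^{\mathcal{A}}]=w_1(TM)$ with Theorem~\ref{thm:orientability-w1}. Also, $\delta_0>0$ is not automatic from compactness, since transition Jacobians of an exact atlas can degenerate toward the boundary of an overlap; it is, however, implicitly required by \eqref{eq:mu-condition}, so your fallback reading is the right one.
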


\begin{proof}
By (i) and Theorem~\ref{thm:sign-cocycle-stability} or Theorem~\ref{thm:local-stability}, the sign cocycle defines a class $[\omega^{\mathcal{A}}] \in \check{H}^1(\mathcal{U}; \Z/2)$. By (ii) and Theorem~\ref{thm:w1-agreement}, $[\omega^{\mathcal{A}}] = w_1(TM)$. By Theorem~\ref{thm:orientability-w1}, $M$ is orientable iff $w_1(TM) = 0$, iff $\{\omega_{ji}\}$ is a coboundary.
\end{proof}

\subsection{The non-degeneracy condition in practice}

The non-degeneracy gap $\delta > 0$ is the crucial assumption underlying stability. We now discuss its practical significance and verifiability.

\begin{remark}[Monitoring non-degeneracy]\label{rem:monitoring}
In practice, $\delta(\mathcal{A}) > 0$ can be verified post-training by computing $|\det g_{ji}(x)|$ at each sample point $x$ in each overlap and checking that the minimum value is bounded away from zero. A small minimum indicates that the sign cocycle may be unreliable at those points, suggesting the need for finer charts or additional training.
\end{remark}

\begin{remark}[Relationship to the Jacobian regularity loss]
The Jacobian regularity loss $\mathcal{L}_{\mathrm{jac}}$ encourages $\sigma_{\min}(d E_i) \geq \epsilon > 0$. Since $g_{ji}(x) = d(E_j)_{D_i(E_i(x))} \cdot d(D_i)_{E_i(x)}$, the Jacobian regularity loss indirectly promotes non-degeneracy of the transition map Jacobians. The following proposition makes this connection precise for exact atlases.
\end{remark}

\begin{proposition}[Non-degeneracy from encoder--decoder regularity]\label{prop:nondeg-bound}
Let $\mathcal{A}$ be a $C^1$ exact autoencoder atlas on a smooth $d$-manifold $M \subset \R^N$. Suppose there exist $s_E, s_D > 0$ such that for all $i$ and $x \in U_i$,
\[
\sigma_{\min}\bigl(d(E_i)_x|_{T_xM}\bigr) \geq s_E, \qquad \sigma_{\min}\bigl(d(D_i)_{E_i(x)}\bigr) \geq s_D.
\]
Then for all overlapping pairs $(i,j)$ and all $x \in U_i \cap U_j$,
\[
|\det g_{ji}(x)| \geq s_E^d \cdot s_D^d,
\]
so $\delta(\mathcal{A}) \geq (s_E \cdot s_D)^d > 0$.
\end{proposition}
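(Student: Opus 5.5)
The approach is to factor the transition Jacobian through the tangent space $T_xM$. Each factor then becomes a square $d\times d$ matrix once we fix an orthonormal basis of $T_xM$, so the determinant splits and each factor can be bounded by its smallest singular value. Fix an overlapping pair $(i,j)$ and $x \in U_i \cap U_j$. By exact reconstruction, $D_i(E_i(x)) = x$, so the chain rule (as in Lemma~\ref{lemma:jacobian-factorisation}(1) with $\varepsilon=0$) gives
\[
g_{ji}(x) = d(E_j)_x \cdot d(D_i)_{E_i(x)}.
\]

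\textbf{Step 1: the decoder differential lands in $T_xM$.} In an exact chart, $E_i\colon U_i \to Z_i$ is a homeomorphism onto $Z_i$. Since $U_i$ is a $d$-manifold, invariance of domain makes $Z_i$ open in $\R^d$. The decoder maps $Z_i$ into $M$, so $D_i|_{Z_i}$ is a $C^1$ map into the submanifold $M$. Hence the range of $B \coloneqq d(D_i)_{E_i(x)}\colon \R^d \to \R^N$ is contained in $T_{D_i(E_i(x))}M = T_xM$. We then write $g_{ji}(x) = A\,B$ with $A \coloneqq d(E_j)_x|_{T_xM}\colon T_xM \to \R^d$. Only the restriction of $d(E_j)_x$ to $T_xM$ matters, because $B$ has image in $T_xM$.

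\textbf{Step 2: reduce to square matrices.} Choose an orthonormal basis of $T_xM$, i.e.\ a linear isometry $\iota\colon \R^d \to T_xM$. Then $g_{ji}(x) = (A\iota)(\iota^{-1}B)$, a product of two $d \times d$ matrices, and so $\det g_{ji}(x) = \det(A\iota)\det(\iota^{-1}B)$. Since $\iota$ is an isometry onto $T_xM$ and $B$ takes values in $T_xM$, the singular values of $\iota^{-1}B$ coincide with those of $B$ viewed as a map $\R^d \to \R^N$. Likewise, the singular values of $A\iota$ are those of $d(E_j)_x|_{T_xM}$.

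\textbf{Step 3: bound each factor.} For a square matrix, $|\det|$ equals the product of its singular values, so it is at least $\sigma_{\min}^d$. The hypotheses apply at the relevant points: the encoder bound holds at $x \in U_j$ for chart $j$, and the decoder bound holds at $E_i(x)$ with $x \in U_i$. Therefore $|\det(A\iota)| \ge s_E^d$ and $|\det(\iota^{-1}B)| \ge s_D^d$. Multiplying gives $|\det g_{ji}(x)| \ge (s_E s_D)^d$. Taking the infimum over $x$ and the minimum over overlapping pairs yields $\delta(\mathcal{A}) \ge (s_E s_D)^d > 0$.

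The only genuinely delicate point is Step~1. Without knowing that $\operatorname{range} d(D_i)$ lies in $T_xM$, the factor $d(E_j)_x$ would act on normal directions as well, and the encoder bound restricted to $T_xM$ would not control it. I handle this through the exactness of the chart (the decoder maps the open set $Z_i$ into $M$). The hypothesis $\sigma_{\min}(d(D_i)) \ge s_D > 0$ also guarantees that $B$ is injective, so its range is all of $T_xM$. This is consistent with the dimension count, though the argument does not need it.
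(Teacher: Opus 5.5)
Your proof is correct and follows essentially the same route as the paper. Both arguments rest on the observation that exactness forces $\operatorname{range} d(D_i)_{E_i(x)} \subseteq T_xM$, so that only the tangential restriction of $d(E_j)_x$ matters. The difference is cosmetic: the paper bounds $\sigma_{\min}(g_{ji}(x)) \ge s_E s_D$ directly via $\|A(Bv)\| \ge s_E\|Bv\| \ge s_E s_D\|v\|$ and then uses $|\det| \ge \sigma_{\min}^d$, whereas you choose an orthonormal frame of $T_xM$ and split the determinant into two square factors. You also justify the tangency via invariance of domain, which the paper leaves implicit.
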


\begin{proof}
Set $y \coloneqq D_i(E_i(x))$ and let $A \coloneqq d(E_j)_y$, $B \coloneqq d(D_i)_{E_i(x)}$, so that $g_{ji}(x) = AB$.

Since $\mathcal{A}$ is exact, $D_i(Z_i) \subset M$, hence $\operatorname{range}(B) \subseteq T_y M$. The restriction $d(E_j)_y|_{T_yM} : T_yM \to \R^d$ satisfies $\|Aw\| \geq s_E \|w\|$ for all $w \in T_yM$. Since $Bv \in T_yM$ for every $v \in \R^d$,
\[
\|g_{ji}(x)v\| = \|A(Bv)\| \geq s_E\,\|Bv\| \geq s_E \cdot s_D\,\|v\|,
\]
giving $\sigma_{\min}(g_{ji}(x)) \geq s_E \cdot s_D$ and $|\det g_{ji}(x)| \geq (s_E s_D)^d$.
\end{proof}

\begin{remark}[Extension to approximate atlases]
For approximate atlases with reconstruction error $\varepsilon > 0$, the decoder image $D_i(Z_i)$ deviates from $M$ by $O(\varepsilon)$, so the range of $B$ deviates from $T_yM$ by $O(\varepsilon)$. A perturbation argument yields $|\det g_{ji}(x)| \geq (s_E s_D)^d - O(\varepsilon)$.
\end{remark}

\begin{remark}[Summary of the stability framework]
The practical pipeline for orientability detection is justified as follows:
\begin{enumerate}
    \item \textbf{Train} an autoencoder atlas with reconstruction loss, achieving small $\varepsilon$.
    \item \textbf{Verify non-degeneracy:} check that $|\det g_{ji}(x)| \geq \delta > 0$ on all overlaps.
    \item \textbf{Compute the sign cocycle} $\omega_{ji}(x) = \sign(\det g_{ji}(x))$.
    \item \textbf{Test the coboundary condition:} search for $\{\nu_i\}$ with $\omega_{ji} = \nu_j \cdot \nu_i$.
\end{enumerate}
Theorem~\ref{thm:sign-cocycle-stability} (or its localised refinement Theorem~\ref{thm:local-stability}) guarantees that the sign cocycle is a valid \v{C}ech cocycle, and Theorem~\ref{thm:w1-agreement} guarantees that this class equals $w_1(TM)$. Corollary~\ref{cor:practical-orientability} then ensures that the coboundary test correctly detects orientability.
\end{remark}

\section{Topological Constraints on the Number of Charts}\label{sec:obstruction}

We analyse how many autoencoder charts are required to represent a manifold. The main conclusion is that the minimum number of charts is determined by the topology of $M$ as a space, specifically by the minimum cardinality of a good cover, rather than by the structure of the tangent bundle $TM$. Non-trivial characteristic classes provide useful \emph{obstructions} to single-chart representations, but the converse does not hold: a trivial tangent bundle does not guarantee that a single chart suffices.

\subsection{Single-chart autoencoders imply trivial tangent bundle}

\begin{definition}[Single-chart autoencoder]
A \emph{single-chart autoencoder} on a smooth $d$-manifold $M$ consists of maps $E \colon M \to Z \subset \R^d$ and $D \colon Z \to M$ such that $E$ is a diffeomorphism onto its image and $D \circ E = \Id_M$.
\end{definition}

\begin{proposition}[Single chart implies trivial bundle]\label{prop:single-chart-trivial}
If $M$ admits a single-chart autoencoder with latent dimension $d = \dim M$, then the tangent bundle $TM$ is trivialisable, and consequently all Stiefel--Whitney classes vanish: $w_i(TM) = 0$ for all $i \geq 1$.
\end{proposition}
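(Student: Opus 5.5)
The plan is to write down an explicit global frame of $TM$ coming from the single chart, and then invoke the standard fact that trivial bundles have vanishing Stiefel--Whitney classes.

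\textbf{Step 1: build the frame.} Since $E\colon M \to Z \subset \R^d$ is a diffeomorphism onto its image, $Z = E(M)$ is an open subset of $\R^d$ (or at least a $d$-dimensional submanifold of $\R^d$, hence open by invariance of domain). The inverse $E^{-1} = D|_Z$ is smooth. For each $x \in M$, the differential $dE_x\colon T_xM \to T_{E(x)}\R^d \cong \R^d$ is a linear isomorphism. Define
\[
\Psi\colon TM \to M \times \R^d, \qquad (x,v) \mapsto (x,\, dE_x v).
\]
This map is fibrewise linear and bijective. It is smooth, and its inverse $(x,w) \mapsto (x,\, d(D)_{E(x)} w)$ is also smooth. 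Hence $\Psi$ is a bundle isomorphism and $TM$ is trivial.

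\textbf{Step 1, alternative view.} The same conclusion can be read off within the paper's framework. A single-chart autoencoder is an autoencoder atlas with $|I| = 1$. Its only transition function is $g_{11} = \Id$ by Lemma~\ref{lemma:linearised-cocycle}. By Proposition~\ref{prop:tangent-bundle-isomorphism}, which applies because $\{(M,E)\}$ is a smooth atlas compatible with the structure of $M$, we get $TM \cong \mathcal{T}_{\mathcal{A}}$. The construction of Theorem~\ref{thm:bundle-from-transitions} with a single open set is then literally $M \times \R^d$.

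\textbf{Step 2: vanishing classes.} Stiefel--Whitney classes are natural under pullback, and every class $w_i$ with $i \ge 1$ vanishes for a bundle over a point. A trivial bundle $M \times \R^d$ is the pullback of $\R^d \to \mathrm{pt}$ along the constant map, so naturality gives $w_i(M \times \R^d) = 0$ for $i \ge 1$ \cite{milnor1974characteristic}. Isomorphism invariance then transfers this to $TM$. For $w_1$ there is also a direct check: the sign cocycle of the one-chart cover is identically $+1$, so it is a coboundary and $w_1 = 0$ by Proposition~\ref{prop:main-orientability}.

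The only step that needs care is Step 1, specifically that $E(M)$ is open in $\R^d$, so that the trivialisation is a genuine diffeomorphism of manifolds of the same dimension. Invariance of domain handles this. Everything else is routine.
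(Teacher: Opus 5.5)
Your argument is correct and is essentially the paper's proof: the trivialisation $(x,v)\mapsto(x,dE_x v)$ is exactly the pullback of the standard frame of $\R^d$ along $E^{-1}$, and the vanishing of $w_i$ for $i\ge 1$ on a trivial bundle is the standard fact the paper cites. One caveat concerns your optional direct check of $w_1$: Proposition~\ref{prop:main-orientability} assumes a good cover, and $\{M\}$ is good only when $M$ is contractible (an open annulus in $\R^2$ admits a single chart but is not contractible), so that remark should rest on your naturality argument rather than on that proposition.
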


\begin{proof}
The encoder $E \colon M \to Z \subset \R^d$ is a diffeomorphism onto an open subset $Z$. The standard frame $\{\partial/\partial x_1, \ldots, \partial/\partial x_d\}$ on $\R^d$ restricts to a global frame on $Z$, which pulls back via $E^{-1}$ to a global frame on $M$. Hence $TM \cong M \times \R^d$ is trivial, and all characteristic classes of a trivial bundle vanish~\cite[Lemma~6.8]{Zakharevich2024}.
\end{proof}

\begin{corollary}[Non-trivial bundle requires multiple charts]\label{cor:nontrivial-multiple}
If $w_i(TM) \neq 0$ for some $i \geq 1$, then any autoencoder atlas on $M$ with latent dimension $d = \dim M$ requires at least two charts.
\end{corollary}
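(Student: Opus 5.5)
This is a contrapositive of Proposition~\ref{prop:single-chart-trivial}. Suppose, for contradiction, that $M$ admits an autoencoder atlas with latent dimension $d=\dim M$ consisting of exactly one chart $(U_1,E_1,D_1)$. Since the charts cover $M$, we must have $U_1=M$. The reconstruction condition then reads $D_1\circ E_1=\Id_M$.

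The first step is to check that this one-chart atlas is a single-chart autoencoder in the sense of the definition above. That requires $E_1$ to be a diffeomorphism onto its image. In the smooth setting of Definition~\ref{def:smooth-chart} this holds by hypothesis. Read this way, the corollary concerns smooth autoencoder atlases, which is the setting in which $\mathcal{T}_\mathcal{A}$ and the characteristic-class statements of Section~\ref{sec:tangent-bundle} are formulated. I would state this reading explicitly.

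The second step is to invoke Proposition~\ref{prop:single-chart-trivial}. It gives that $TM$ is trivial, hence $w_i(TM)=0$ for all $i\geq 1$. This contradicts the hypothesis $w_i(TM)\neq 0$ for some $i\ge1$, so at least two charts are needed.

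The main point requiring care is the regularity of the single chart. For a merely continuous injective encoder, triviality of $TM$ does not follow directly. I would sidestep this with a remark rather than a repair. If $M$ is compact (the standing assumption), a continuous injective $E\colon M\to\R^d$ would be an embedding of a closed $d$-manifold into $\R^d$, which invariance of domain forbids, since the image would be both open and compact in $\R^d$. So even a topological single chart is impossible in the compact case, and the corollary holds a fortiori. Compactness, however, already excludes one-chart atlases whatever the characteristic classes are. The content of the corollary therefore lies in the smooth, possibly non-compact setting, where Proposition~\ref{prop:single-chart-trivial} does the work. I would present the smooth argument as the proof and mention the compact case in a remark.
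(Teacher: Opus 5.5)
Your proposal is correct and is the paper's argument: the paper gives no separate proof, treating the corollary as the contrapositive of Proposition~\ref{prop:single-chart-trivial} applied to the one-chart atlas with $U_1 = M$. Your side remark that compactness already rules out a single chart, even a merely continuous injective one by invariance of domain, matches the paper's remark immediately after the corollary, and you are right to state the smooth-chart reading explicitly, since that is the regularity the proposition actually uses.
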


\begin{remark}[The converse fails]
A trivial tangent bundle does not imply that a single chart suffices. Every compact manifold requires at least two charts, since a compact $d$-manifold cannot be diffeomorphic to an open subset of $\R^d$. This includes compact parallelizable manifolds such as $S^1$, $T^2$, $S^3$, and $S^7$, all of which have $TM \cong M \times \R^d$ yet require multiple charts for purely topological reasons.
\end{remark}

\subsection{Chart count is determined by good cover structure}

We now show that the minimum number of autoencoder charts equals the minimum cardinality of a good cover of $M$, independently of the bundle structure.

The key observation is that every good cover automatically trivialises \emph{any} vector bundle.

\begin{lemma}[Good covers trivialize all bundles]\label{lem:good-cover-trivializes}
Let $\mathcal{U} = \{U_i\}$ be a good cover of a smooth manifold $M$, and let $\xi$ be any vector bundle over $M$. Then $\xi|_{U_i}$ is trivial for every $i$.
\end{lemma}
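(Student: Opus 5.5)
The approach is the standard homotopy-invariance argument: bundles over contractible spaces are trivial, and each $U_i$ in a good cover is contractible by Definition~\ref{def:good-cover}(1). Fix $i$. Since $U_i$ is contractible, there is a point $p\in U_i$ and a homotopy $H\colon U_i\times[0,1]\to U_i$ with $H_0=\Id_{U_i}$ and $H_1\equiv p$. The plan is to transfer the triviality of a pullback along the constant map back to $\xi|_{U_i}$.

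The key steps, in order, are as follows.

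\emph{Paracompactness.} $U_i$ is an open subset of the manifold $M$. Hence it is itself a (second countable, Hausdorff) manifold, and in particular paracompact.

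\emph{Homotopy invariance.} I will invoke the homotopy invariance of pullbacks: if $f_0\simeq f_1\colon B\to M$ and $B$ is paracompact, then $f_0^*\xi\cong f_1^*\xi$ (see \cite{milnor1974characteristic} or \cite{Zakharevich2024}). Applied to $f_t=H_t$ composed with the inclusion $U_i\hookrightarrow M$, this gives
\[
\xi|_{U_i}=H_0^*(\xi|_{U_i})\cong H_1^*(\xi|_{U_i}).
\]

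\emph{Triviality of the pullback.} Since $H_1$ is constant at $p$, the pullback $H_1^*(\xi|_{U_i})$ is $U_i\times \xi_p\cong U_i\times\R^k$, which is trivial. Hence $\xi|_{U_i}$ is trivial.

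The main obstacle is the homotopy invariance theorem itself. If it is not simply cited, it requires the paracompactness argument (the bundle over $B\times[0,1]$ is isomorphic to the pullback of its restriction to $B\times\{0\}$), and that argument is the only nontrivial input.

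In the smooth setting there is an alternative that avoids paracompactness subtleties. One chooses a connection on $\xi$ and uses parallel transport along the paths $t\mapsto H(x,t)$ to produce a frame. This requires a smooth contraction, which can be obtained by smoothing the continuous one via Whitney approximation.

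Note that only condition~(1) of Definition~\ref{def:good-cover} is used; the conditions on intersections play no role here.
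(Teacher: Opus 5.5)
Your proposal is correct and follows the paper's own argument: contractibility of $U_i$ plus homotopy invariance of pullbacks over a paracompact base gives $\xi|_{U_i}\cong h_1^*\xi = U_i\times\xi_p$. You also state explicitly why $U_i$ is paracompact (it is an open submanifold of $M$), which the paper's proof uses without comment.
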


\begin{proof}
Each $U_i$ is contractible. By homotopy invariance of vector bundles~\cite[\S 3]{milnor1974characteristic}, any bundle over a contractible paracompact space is trivial: if $h \colon U_i \times [0,1] \to U_i$ contracts $U_i$ to a point $p$, then $\xi|_{U_i} \cong h_1^*\xi = U_i \times \xi_p$.
\end{proof}

\begin{proposition}[Autoencoder charts are local trivializations]\label{prop:chart-trivialization}
Let $(U, E, D)$ be a smooth autoencoder chart with latent dimension $d = \dim M$. Then $E$ provides a local trivialization of $TM|_U$: the frame $\{(dE)^{-1}(\partial/\partial z_1), \ldots, (dE)^{-1}(\partial/\partial z_d)\}$ trivialises $TM$ over $U$.
\end{proposition}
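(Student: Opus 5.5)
The plan is to show directly that the $d$ vector fields $e_\ell(x) \coloneqq (dE_x)^{-1}(\partial/\partial z_\ell)$, $\ell = 1,\dots,d$, are well defined, continuous (indeed smooth), and pointwise linearly independent on $U$. Once that is established, the map
\[
U \times \R^d \to TM|_U, \qquad (x, v) \mapsto \sum_{\ell} v_\ell\, e_\ell(x),
\]
is the desired trivialisation: it is a continuous bijection that is linear on fibers, with a continuous inverse.

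\textbf{Step 1.} Verify that $dE_x \colon T_xM \to T_{E(x)}\R^d \cong \R^d$ is an isomorphism for every $x \in U$. By Definition~\ref{def:smooth-chart}, $E$ is a diffeomorphism from $U$ onto its image $Z$. Since $Z$ is a $d$-dimensional submanifold of $\R^d$, it is open, and $dE_x$ is invertible with inverse $d(E^{-1})_{E(x)}$. The paper also gives a second description of this inverse: the reconstruction condition $D \circ E = \Id_U$ yields $D|_Z = E^{-1}$, so $(dE_x)^{-1} = dD_{E(x)}$. Either way, $e_\ell(x) = dD_{E(x)}(\partial/\partial z_\ell)$ is a well-defined tangent vector at $x$.

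\textbf{Step 2.} Check smoothness and independence of the frame. The vector field $e_\ell$ is the pushforward of the constant coordinate field $\partial/\partial z_\ell$ on $Z$ under the diffeomorphism $E^{-1} = D|_Z$, so it is smooth. At each $x$ the vectors $\{e_\ell(x)\}_\ell$ are the image of a basis of $\R^d$ under the linear isomorphism $(dE_x)^{-1}$, so they form a basis of $T_xM$. The trivialisation map is then smooth and a linear isomorphism on each fiber. Its inverse is $w \mapsto (x, dE_x(w))$, which is also smooth, so it is a bundle isomorphism. This is the same argument as Proposition~\ref{prop:single-chart-trivial}, restricted to $U$.

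\textbf{Step 3 (optional consistency check).} On overlaps $U_i \cap U_j$, the two frames are related by the linearised transition functions: the chain rule gives $dE_j \circ (dE_i)^{-1} = g_{ji}(x)$. This confirms that the trivialisations patch together according to Proposition~\ref{prop:tangent-bundle-isomorphism}.

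I expect no serious obstacle here. The only point needing care is Step 1: $E$ takes values in $\R^d$, which has the same dimension as $M$, and the diffeomorphism hypothesis of Definition~\ref{def:smooth-chart} is what makes $dE_x$ invertible rather than merely injective.
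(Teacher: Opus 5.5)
Your proof is correct and follows the same route as the paper: $dE_x$ is an isomorphism because $E$ is a diffeomorphism onto its (open) image, and applying $(dE_x)^{-1}$ to the standard basis of $\R^d$ gives the frame. Your extra checks (smoothness of the frame via $D|_Z = E^{-1}$, the explicit bundle map $U \times \R^d \to TM|_U$ and its inverse) only spell out details the paper leaves implicit.
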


\begin{proof}
The encoder $E \colon U \to Z \subset \R^d$ is a diffeomorphism onto its image (Definition~\ref{def:autoencoder-chart}). Its differential $dE_x \colon T_xM \to T_{E(x)}\R^d \cong \R^d$ is an isomorphism at each $x \in U$. The inverse $(dE_x)^{-1}$ applied to the standard basis of $\R^d$ gives a frame on $U$.
\end{proof}

\begin{theorem}[Charts from good covers]\label{thm:charts-from-good-covers}
Let $\mathcal{U} = \{U_i\}_{i=1}^n$ be a good cover of $M$ in which each $U_i$ is diffeomorphic to an open subset of $\R^d$. Then there exists an autoencoder atlas $\mathcal{A} = \{(U_i, E_i, D_i)\}_{i=1}^n$ with $n$ charts such that $\mathcal{T}_{\mathcal{A}} \cong TM$.

Conversely, any autoencoder atlas $\mathcal{A} = \{(U_i, E_i, D_i)\}_{i=1}^n$ with $\mathcal{T}_{\mathcal{A}} \cong TM$ provides a trivializing cover of $TM$ with $n$ open sets, each diffeomorphic to an open subset of $\R^d$.
\end{theorem}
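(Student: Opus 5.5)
The forward direction is a direct construction. For each $i$, the hypothesis gives a diffeomorphism $\phi_i\colon U_i \to Z_i \subset \R^d$ onto an open subset. I would set $E_i \coloneqq \phi_i$ and $D_i \coloneqq \phi_i^{-1}\colon Z_i \to U_i \subset M$. Then $D_i \circ E_i = \Id_{U_i}$, $E_i$ is injective and continuous, and $E_i$ is a diffeomorphism onto its image. So each $(U_i,E_i,D_i)$ is a smooth autoencoder chart in the sense of Definition~\ref{def:smooth-chart}, and since $\mathcal{U}$ covers $M$, $\mathcal{A}$ is an autoencoder atlas.

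To identify $\mathcal{T}_{\mathcal{A}}$ with $TM$, I would note that each $\phi_i$ is a diffeomorphism with respect to the given smooth structure of $M$. Hence $\{(U_i,E_i)\}$ is a smooth atlas compatible with that structure: the transition maps $E_j\circ E_i^{-1}$ are compositions of diffeomorphisms. Proposition~\ref{prop:tangent-bundle-isomorphism} then gives $\mathcal{T}_{\mathcal{A}} \cong TM$ directly. I would remark that the good-cover hypothesis is not needed for this step. It serves only to place the statement in the context of the chart-count discussion, and Lemma~\ref{lem:good-cover-trivializes} independently confirms that each $TM|_{U_i}$ is trivial.

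For the converse, I would read "autoencoder atlas with $\mathcal{T}_{\mathcal{A}} \cong TM$" as a smooth autoencoder atlas (so that $\mathcal{T}_{\mathcal{A}}$ is defined) with latent dimension $d=\dim M$. Each $E_i$ is then a diffeomorphism of $U_i$ onto the open set $Z_i=E_i(U_i)\subset\R^d$, so each $U_i$ is diffeomorphic to an open subset of $\R^d$. Proposition~\ref{prop:chart-trivialization} shows that $E_i$ induces a frame $\{(dE_i)^{-1}(\partial/\partial z_a)\}_{a=1}^d$ of $TM|_{U_i}$, so $TM|_{U_i}$ is trivial and $\{U_i\}_{i=1}^n$ is a trivializing cover with $n$ sets.

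Alternatively, $\mathcal{T}_{\mathcal{A}}$ is by construction (Definition~\ref{def:tangent-bundle}) trivial over each $U_i$ via the chart $U_i\times\R^d$. Restricting the given isomorphism $\mathcal{T}_{\mathcal{A}}\cong TM$ to $U_i$ then transports these trivializations to $TM$. This version uses the hypothesis $\mathcal{T}_{\mathcal{A}}\cong TM$ explicitly.

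The main obstacle is a subtlety rather than a computation: one must make sure $Z_i$ is open in $\R^d$. Openness is what makes $E_i$ a genuine coordinate chart. It follows from invariance of domain, or from $dE_i$ being an isomorphism and the inverse function theorem, because $E_i$ is an injective local diffeomorphism between $d$-manifolds. Every other step reduces to earlier results.
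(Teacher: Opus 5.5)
Your proposal is correct and follows the paper's proof. In the forward direction you set $E_i=\phi_i$, $D_i=\phi_i^{-1}$ and invoke Proposition~\ref{prop:tangent-bundle-isomorphism}, and your second converse argument (trivialise $\mathcal{T}_{\mathcal{A}}$ over each $U_i$, then transport along $\mathcal{T}_{\mathcal{A}}\cong TM$) is exactly the paper's route. Your first converse argument, which applies Proposition~\ref{prop:chart-trivialization} directly to $TM$ and checks that $Z_i$ is open, is a harmless shortcut that makes explicit what the paper leaves implicit: for smooth charts of latent dimension $d$, the isomorphism hypothesis is not actually needed for this conclusion.
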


\begin{proof}
$(\Rightarrow)$ By hypothesis, for each $i$ there is a smooth diffeomorphism $\phi_i \colon U_i \to V_i$ onto an open subset $V_i \subset \R^d$. Set $E_i \coloneqq \phi_i$ and $D_i \coloneqq \phi_i^{-1}$. Then $(U_i, E_i, D_i)$ is a smooth autoencoder chart in the sense of Definition~\ref{def:smooth-chart}, and $\{(U_i, E_i)\}$ is a smooth atlas compatible with the smooth structure of $M$. By Proposition~\ref{prop:tangent-bundle-isomorphism}, $\mathcal{T}_{\mathcal{A}} \cong TM$.

$(\Leftarrow)$ By Proposition~\ref{prop:chart-trivialization}, each chart $(U_i, E_i, D_i)$ trivialises $\mathcal{T}_{\mathcal{A}}$ over $U_i$. If $\mathcal{T}_{\mathcal{A}} \cong TM$, this is also a trivialisation of $TM$.
\end{proof}
\begin{remark}[Realization of the hypothesis]\label{rem:good-cover-realization}
The hypothesis is mild: every Riemannian good cover (good cover by geodesically convex balls under any auxiliary Riemannian metric on~$M$) satisfies it, since $\exp_{p_i}$ restricts to a diffeomorphism from a Euclidean ball onto each $U_i$. For a compact smooth manifold $M$, Bott--Tu~\cite[Theorem~5.1]{BottTu} shows that any open cover admits a refinement by such balls; this refinement may increase cardinality, but in all examples we consider (and in the bounds of~\cite{Karoubi2017}), the strict covering type is realized by such a cover.
\end{remark}
\begin{corollary}[Minimum number of charts]\label{cor:min-charts}
The minimum number of charts in an autoencoder atlas on $M$ equals the minimum cardinality of a good cover of $M$ whose elements are each diffeomorphic to an open subset of $\R^d$.
\end{corollary}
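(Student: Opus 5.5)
The plan is to prove the two inequalities $n_{\min} \le \gamma_d(M)$ and $n_{\min} \ge \gamma_d(M)$ separately. Here $n_{\min}$ is the minimum number of charts in an autoencoder atlas on $M$ (latent dimension $d=\dim M$), and $\gamma_d(M)$ is the minimum cardinality of a good cover of $M$ by sets each diffeomorphic to an open subset of $\R^d$. Both directions should follow from Theorem~\ref{thm:charts-from-good-covers}. The work lies in tracking which hypothesis on the cover each direction produces or consumes.

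\emph{Upper bound $n_{\min} \le \gamma_d(M)$.} Take a good cover $\{U_i\}_{i=1}^{n}$ with $n=\gamma_d(M)$ and diffeomorphisms $\phi_i\colon U_i\to V_i\subset\R^d$. The forward direction of Theorem~\ref{thm:charts-from-good-covers} gives the atlas $E_i=\phi_i$, $D_i=\phi_i^{-1}$ with $n$ charts. This atlas satisfies Definition~\ref{def:smooth-chart}, and $\mathcal{T}_{\mathcal{A}}\cong TM$ by Proposition~\ref{prop:tangent-bundle-isomorphism}. This direction is routine.

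\emph{Lower bound $n_{\min} \ge \gamma_d(M)$.} Take an atlas $\mathcal{A}=\{(U_i,E_i,D_i)\}_{i=1}^{n}$. I do not intend to reduce $n$. Instead, I will show that $\{U_i\}$ is itself an admissible competitor for $\gamma_d(M)$, which requires two properties.

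\begin{enumerate}
\item \emph{Each $U_i$ is diffeomorphic to an open subset of $\R^d$.} This comes from the converse half of Theorem~\ref{thm:charts-from-good-covers}, or directly from Definition~\ref{def:smooth-chart}: $E_i\colon U_i\to Z_i$ is a diffeomorphism. Here $Z_i$ is open by invariance of domain, since $\dim U_i=d$.
\item \emph{$\{U_i\}$ is a good cover.} This is where I expect the main obstacle. The converse half of Theorem~\ref{thm:charts-from-good-covers} only produces a \emph{trivialising} cover, and Lemma~\ref{lem:good-cover-trivializes} runs in the opposite direction (good implies trivialising, not conversely). So the conclusion cannot come from bundle theory alone.
\end{enumerate}

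For the second property, I will invoke the standing convention of this framework rather than bundle theory. From Proposition~\ref{prop:main-orientability} onward, and in the contributions list (``the minimum number of autoencoder charts whose underlying cover is good''), the chart domains of the atlases being counted are required to form a good cover. This is exactly the setting in which the \v{C}ech computations are valid. Under that convention, $\{U_i\}$ is a good cover of cardinality $n$ by sets diffeomorphic to open subsets of $\R^d$, so $n\ge\gamma_d(M)$.

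I will state this explicitly in the proof, because the convention cannot be dropped. Take $S^2$ with the two stereographic charts. This is an autoencoder atlas with $n=2$, but the overlap is an annulus, so the cover is not good. Meanwhile $\gamma_d(S^2)>2$, since a good cover with two elements would have a contractible nerve, forcing $S^2$ to be contractible. So the equality is false for arbitrary atlases, and it holds precisely for the class of atlases over good covers used throughout Sections~\ref{sec:atlasbasedformulation}--\ref{sec:stability}.

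Combining the two bounds gives the corollary. Note that the characteristic classes of $TM$ play no role in either inequality. This matches the section's claim that chart count is governed by the covering type rather than by the tangent bundle.
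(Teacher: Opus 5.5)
Your argument is correct for the reading you adopt, namely atlases whose chart domains form a good cover. Your stereographic atlas on $S^2$ shows that this restriction is necessary: for arbitrary autoencoder atlases the statement is false.

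Your upper bound is the same as the paper's. The lower bound is where you genuinely diverge. The paper takes an arbitrary atlas and uses Proposition~\ref{prop:chart-trivialization} to see that the chart domains form a trivialising cover of $TM$ by sets diffeomorphic to open subsets of $\R^d$. It then appeals to Lemma~\ref{lem:good-cover-trivializes}. That lemma only says good covers are trivialising, so it yields (minimum over trivialising covers) $\le$ (minimum over good covers). This is the reverse of the inequality required, so the paper's converse does not go through, for exactly the reason you give in your item~2. You instead build goodness of the cover into the class of atlases being counted, as the contributions list does (``autoencoder charts whose underlying cover is good''), and the lower bound becomes immediate. You give up generality, and the corollary becomes essentially a restatement of Theorem~\ref{thm:charts-from-good-covers}. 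The paper's route claims a bound for all atlases, but that claim is false.

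One step of your counterexample needs repair. A two-element good cover has contractible nerve for standard good covers, but not under the relaxed Definition~\ref{def:good-cover} that the paper uses. There $U_1\cap U_2$ may have several contractible components, and the component nerve is then a graph with cycles; for the M\"obius cover of Figure~\ref{fig:coverexample} it is a circle. The conclusion $\gamma_2(S^2)>2$ still holds. If $U_1,U_2$ are contractible and $U_1\cap U_2$ is a disjoint union of contractible open sets, Mayer--Vietoris gives $\Z\cong\tilde H_2(S^2)\cong\tilde H_1(U_1\cap U_2)=0$, a contradiction. Equivalently, the component nerve of a two-element cover is at most one-dimensional, so by Remark~\ref{rem:relaxed-nerve} it cannot carry $H^2(S^2;\Z/2)\cong\Z/2$.
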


\begin{proof}
The forward direction follows from Theorem~\ref{thm:charts-from-good-covers}: such a cover of size $n$ yields an autoencoder atlas with $n$ charts. Conversely, by Proposition~\ref{prop:chart-trivialization}, the chart domains of an autoencoder atlas are each diffeomorphic to an open subset of $\R^d$; if the atlas has $\mathcal{T}_{\mathcal{A}} \cong TM$, this is also a trivialising cover of $TM$. By Lemma~\ref{lem:good-cover-trivializes}, every good cover trivialises $TM$, so the converse cover-count bound applies.
\end{proof}

\begin{center}
\renewcommand{\arraystretch}{1.3}
\begin{tabular}{|l|c|l|}
\hline
\textbf{Condition} & \textbf{Min.\ charts} & \textbf{Reason} \\
\hline
Non-compact, contractible & $1$ & $M \cong \R^d$ \\
Non-compact, not contractible & $\geq 2$ & Good cover requires $\geq 2$ sets \\
Compact & $\geq 2$ & No open subset of $\R^d$ is compact \\
$w_i(TM) \neq 0$ for some $i$ & $\geq 2$ & Proposition~\ref{prop:single-chart-trivial} \\
\hline
\end{tabular}
\end{center}

The minimum cardinality of a good cover of a space has been studied systematically by Karoubi and Weibel~\cite{Karoubi2017}, who define the \emph{covering type} $\operatorname{ct}(X)$ as the minimum size of a good cover of any space homotopy equivalent to~$X$. For a fixed smooth manifold~$M$, Corollary~\ref{cor:min-charts} identifies the minimum number of autoencoder charts with the minimum cardinality of a good cover whose elements admit smooth charts to $\R^d$. This is bounded below by the strict covering type of~$M$, the minimum cardinality of any good cover of $M$, and coincides with it whenever the strict covering type is realized by a cover of the form above --- which is the case for every example we consider. The covering type is bounded below by the homological dimension via $\operatorname{ct}(M) \geq \operatorname{hd}(M) + 2$ \cite[Proposition~3.1]{Karoubi2017}, and by the non-vanishing of the cohomology cup product \cite[Proposition~5.2]{Karoubi2017}. The latter yields $\operatorname{ct}(M) \geq 6$ whenever the cup product on $H^1(M; \Z/2)$ is non-trivial, which applies to all non-orientable surfaces of genus $q \geq 2$ and all oriented surfaces of genus $g \geq 1$.

\section{Loss functions}\label{sec:loss-functions}

We now describe the loss functions used to train autoencoder atlases in practice. As discussed in Remark~\ref{rem:idealization}, the mathematical definitions of Section~\ref{sec:autoencoders} represent idealised conditions approximated through optimisation.

Throughout this section, we assume the cover $\{U_i\}_{i \in I}$ is given, and we write $\rho_i(x) = \mathbf{1}_{U_i}(x)$ for the indicator function of $U_i$.

\subsection*{Reconstruction loss}

The reconstruction loss enforces the condition $D_i \circ E_i \approx \mathrm{Id}_{U_i}$ from Definition~\ref{def:autoencoder-chart}.

\begin{definition}[Reconstruction loss]\label{def:recon-loss}
\[
\mathcal{L}_{\mathrm{recon}} = \mathbb{E}_{x \sim M}\left[ \sum_{i=1}^{n} \rho_i(x)\, \| x - D_i(E_i(x)) \|^2 \right].
\]
\end{definition}

For an $(\varepsilon, \eta)$-approximate atlas, the value of $\mathcal{L}_{\mathrm{recon}}$ controls the pointwise reconstruction bound $\sup_x \|D_i(E_i(x)) - x\|$ via standard concentration, giving direct access to the quantity $\varepsilon$ of Definition~\ref{def:approximate-atlas}.

\subsection*{Cocycle loss and its relation to reconstruction loss}\label{subsec:cocycle-loss}

By Lemma~\ref{lemma:cocycle-from-reconstruction}, exact reconstruction implies the cocycle condition. We now show that in the approximate setting, the cocycle error in a triple $i \to j \to k$ depends only on the reconstruction error of the \emph{middle} chart $j$. This justifies the absence of an explicit cocycle term in the loss.

\begin{definition}[Cocycle loss]\label{def:cocycle-loss}
For an autoencoder atlas $\mathcal{A} = \{(U_i, E_i, D_i)\}_{i=1}^n$, the \emph{cocycle loss} is
\[
\mathcal{L}_{\mathrm{cocycle}} = \mathbb{E}_{x \sim M}\left[ \sum_{\substack{i,j,k \\ U_i \cap U_j \cap U_k \neq \emptyset}} \rho_{ijk}(x)\, \left\| T_{ki}(E_i(x)) - T_{kj}(T_{ji}(E_i(x))) \right\|^2 \right],
\]
where $\rho_{ijk}(x) = \mathbf{1}_{U_i \cap U_j \cap U_k}(x)$.
\end{definition}

We give two versions of the key identity: an exact version (clean pointwise equality on the manifold) and an approximate version (quantitative bound through extended encoders).

\begin{theorem}[Cocycle error depends only on chart $j$, exact case]\label{thm:cocycle-error-exact}
Let $\mathcal{A} = \{(U_i, E_i, D_i)\}$ be a smooth (exact) autoencoder atlas. For all $x \in U_i \cap U_j \cap U_k$,
\begin{equation}\label{eq:cocycle-error-exact}
T_{ki}(E_i(x)) - T_{kj}(T_{ji}(E_i(x))) = 0.
\end{equation}
That is, the integrand of $\mathcal{L}_{\mathrm{cocycle}}$ vanishes identically.
\end{theorem}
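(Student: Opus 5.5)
This is essentially a restatement of Lemma~\ref{lemma:cocycle-from-reconstruction}, written in the form of the integrand of $\mathcal{L}_{\mathrm{cocycle}}$. The plan is therefore to evaluate both terms directly at the point $E_i(x)$, using only the reconstruction identity $D_\ell \circ E_\ell = \Id_{U_\ell}$ for $\ell \in \{i,j\}$.

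First, fix $x \in U_i \cap U_j \cap U_k$ and check that every composition is defined. By Remark~\ref{rem:transition-well-defined}, $D_i(E_i(x)) = x \in U_j \cap U_k$. Hence both $T_{ji}(E_i(x))$ and $T_{ki}(E_i(x))$ are defined.

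Next, compute the direct term. We have $T_{ki}(E_i(x)) = E_k(D_i(E_i(x))) = E_k(x)$.

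Then compute the composed term in two stages. First, $T_{ji}(E_i(x)) = E_j(D_i(E_i(x))) = E_j(x)$. Since $x \in U_j \cap U_k$, the point $E_j(x)$ lies in $E_j(U_j \cap U_k)$, so $T_{kj}$ is defined there. Applying it gives $T_{kj}(E_j(x)) = E_k(D_j(E_j(x))) = E_k(x)$, using reconstruction for chart $j$.

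Subtracting the two terms gives exactly $0$, which is \eqref{eq:cocycle-error-exact}. Since this holds for every $x$ in the support of $\rho_{ijk}$, the integrand of $\mathcal{L}_{\mathrm{cocycle}}$ vanishes identically.

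There is no real obstacle here; smoothness is not even needed. The only point requiring care is the domain bookkeeping in the second stage: one must confirm that the intermediate point $E_j(x)$ lies in the domain $E_j(U_j \cap U_k)$ of $T_{kj}$, which it does because $x \in U_k$. It is also worth noting, as preparation for the approximate version, that the only reconstruction identity used for an "intermediate" point is $D_j(E_j(x)) = x$. This is the source of the claim that, in the approximate setting, the cocycle error is governed by chart $j$ alone. Strictly speaking, the reconstruction of chart $i$ is also used, but it enters both terms identically and cancels.
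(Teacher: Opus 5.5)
Your proof is correct and follows the paper's argument: both terms are evaluated at $E_i(x)$ and reduced to $E_k(x)$ using $D_i\circ E_i=\Id_{U_i}$ and $D_j\circ E_j=\Id_{U_j}$. Your explicit check that $E_j(x)\in E_j(U_j\cap U_k)$, so that $T_{kj}$ is defined there, is a detail the paper leaves implicit.
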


\begin{proof}
By the reconstruction condition $D_i \circ E_i = \mathrm{Id}_{U_i}$, $D_i(E_i(x)) = x$, so
\[
T_{ki}(E_i(x)) = E_k(D_i(E_i(x))) = E_k(x).
\]
Similarly $D_j \circ E_j = \mathrm{Id}_{U_j}$ gives $D_j(E_j(x)) = x$, and
\[
T_{kj}(T_{ji}(E_i(x))) = T_{kj}(E_j(D_i(E_i(x)))) = T_{kj}(E_j(x)) = E_k(D_j(E_j(x))) = E_k(x).
\]
Both expressions equal $E_k(x)$.
\end{proof}

In the approximate setting, $y \coloneqq D_i(E_i(x))$ no longer equals $x$ and generically lies off the manifold. The encoders must then be evaluated at $y$ in their extended domains $O_j, O_k$ (Definition~\ref{def:approximate-atlas}(2),(5)). The cocycle error is then controlled by chart $j$'s reconstruction quality alone.

\begin{theorem}[Cocycle error depends only on chart $j$, approximate case]\label{thm:cocycle-error-approx}
Let $\mathcal{A}$ be an $(\varepsilon, \eta)$-approximate autoencoder atlas. Fix $x \in U_i \cap U_j \cap U_k$ and set $y \coloneqq D_i(E_i(x))$. Then $y \in O_j \cap O_k$, and
\begin{equation}\label{eq:cocycle-error-approx}
T_{ki}(E_i(x)) - T_{kj}(T_{ji}(E_i(x))) = E_k(y) - E_k\bigl(\Phi_j(y)\bigr).
\end{equation}
In particular:
\begin{enumerate}
    \item The cocycle error vanishes if and only if $\Phi_j(y) = y$, i.e.\ chart $j$ reconstructs the point $y$ exactly.
    \item Under the regularity bounds (i)--(iv) of Theorem~\ref{thm:sign-cocycle-stability},
    \begin{equation}\label{eq:cocycle-error-bound}
    \bigl\|T_{ki}(E_i(x)) - T_{kj}(T_{ji}(E_i(x)))\bigr\| \leq L_E \cdot (L_E L_D + 2)\,\varepsilon.
    \end{equation}
\end{enumerate}
\end{theorem}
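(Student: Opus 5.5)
The plan is to unfold both transition maps directly from their definitions and compare them at the level of points of $\R^N$, before any encoder is applied. First, $y \in O_j \cap O_k$ follows exactly as in part~(1) of Remark~\ref{rem:domain-verification}. By Definition~\ref{def:approximate-atlas}(3), $\|y - x\| \leq \varepsilon$. Since $x \in U_j \cap U_k$, this gives $\mathrm{dist}(y,U_j) \leq \varepsilon \leq R\varepsilon$, and likewise for $U_k$. Definition~\ref{def:approximate-atlas}(5) then places $y$ in both $O_j$ and $O_k$. This makes $E_j(y)$ and $E_k(y)$ well defined.

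Next I would write $T_{ki}(E_i(x)) = E_k(D_i(E_i(x))) = E_k(y)$. For the composed side, $T_{ji}(E_i(x)) = E_j(y)$, and then
\[
T_{kj}(E_j(y)) = E_k(D_j(E_j(y))) = E_k(\Phi_j(y)).
\]
For the last expression to be defined we need $\Phi_j(y) \in O_k$. This is part~(2) of Remark~\ref{rem:domain-verification}, which relies on Lemma~\ref{lem:off-manifold-reconstruction} and the choice $R = L_E L_D + 3$. Subtracting the two expressions gives \eqref{eq:cocycle-error-approx}.

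For item~1, the "if" direction is immediate. For "only if", the honest statement needs $E_k$ to be injective, say on the segment or region containing $y$ and $\Phi_j(y)$. Otherwise $E_k(y) = E_k(\Phi_j(y))$ does not force $\Phi_j(y) = y$. I would therefore read item~1 as holding under injectivity of $E_k$ on $O_k$, or on the relevant neighbourhood. This is the one point where I expect to need an extra hypothesis or a remark that "vanishes" is meant in the sufficient direction; I would flag it rather than try to prove it unconditionally.

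For item~2, I would use the mean value inequality for $E_k$ along the segment $[y, \Phi_j(y)]$ together with bound~(i), $\|dE_k\|_{\mathrm{op}} \leq L_E$. This gives $\|E_k(y) - E_k(\Phi_j(y))\| \leq L_E \|\Phi_j(y) - y\|$. Lemma~\ref{lem:off-manifold-reconstruction}, applied with $\|y - x\| \leq \varepsilon$ and $x \in U_j$, bounds $\|\Phi_j(y) - y\|$ by $(L_E L_D + 2)\varepsilon$, which yields \eqref{eq:cocycle-error-bound}.

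The main technical obstacle is justifying the mean value step: the segment $[y, \Phi_j(y)]$ must lie in $O_k$. Every point on it is within $R\varepsilon$ of $x \in U_k$, because both endpoints are, by the triangle inequality and Lemma~\ref{lem:off-manifold-reconstruction}. So the whole segment is contained in the closed $R\varepsilon$-neighbourhood of $U_k$, which lies in $O_k$ by Definition~\ref{def:approximate-atlas}(5).
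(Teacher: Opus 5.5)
Your proof of the identity and of item~2 follows the paper's proof. Both unfold $T_{ki}$ and $T_{kj}\circ T_{ji}$ at $E_i(x)$, get $y\in O_j\cap O_k$ from Definition~\ref{def:approximate-atlas}(5), and bound item~2 by the Lipschitz property of $E_k$ combined with Lemma~\ref{lem:off-manifold-reconstruction}. Your two extra checks fill steps the paper skips:
\begin{itemize}
\item The paper never verifies $\Phi_j(y)\in O_k$ in this proof. Note that $T_{kj}(E_j(y))$ is defined under exactly that condition, so the identity holds whenever either side makes sense.
\item The paper's appeal to ``the Lipschitz bound (i) on $E_k$ on $O_k$'' tacitly needs your convexity argument, which places the segment $[y,\Phi_j(y)]$ in the closed $R\varepsilon$-ball about $x$.
\end{itemize}

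You are right not to prove the converse in item~1. It is in fact false under the stated hypotheses. Here is a counterexample.
\begin{itemize}
\item Start from an exact smooth atlas $\{(U_\ell,E_\ell^0,D_\ell^0)\}$. Extend every encoder off $M$ as $E_\ell\coloneqq E_\ell^0\circ\pi$, where $\pi$ is the nearest-point projection; this is defined on a tube containing the $O_\ell$ once $\varepsilon$ is small relative to $\tau(M)$.
\item Keep $D_j=D_j^0$, and set $D_i\coloneqq D_i^0+\varepsilon\,\nu\circ D_i^0$ for a local unit normal field $\nu$.
\item Then $y=x+\varepsilon\nu(x)$ and $\Phi_j(y)=D_j^0(E_j^0(\pi(y)))=x\neq y$.
\item Yet $E_k(y)=E_k^0(x)=E_k(\Phi_j(y))$, so the cocycle error vanishes.
\end{itemize}
This is an $(\varepsilon,O(\varepsilon))$-approximate atlas satisfying (i)--(iv) with $\sigma_{\min}(dE_k)>0$.

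The paper's proof rescues the converse by assuming $E_k$ is locally injective on $O_k$. Your proposed patch, injectivity on $O_k$ or on a neighbourhood, is the same hypothesis. Since $N>d$ here, no continuous map from a nonempty open subset of $\R^N$ into $\R^d$ is injective, by invariance of domain. So that hypothesis can never hold. The only non-vacuous fix is a condition on the specific pair, such as injectivity of $E_k$ on the segment $[y,\Phi_j(y)]$, and with it the converse becomes a tautology. The real content of item~1 is the ``if'' direction, which you prove.
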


\begin{proof}
Since $\|y - x\| \leq \varepsilon$ and $x \in U_j \cap U_k$, Definition~\ref{def:approximate-atlas}(5) yields $y \in O_j \cap O_k$, so $E_j(y)$ and $E_k(y)$ are defined.

For the left-hand side of~\eqref{eq:cocycle-error-approx}:
\[
T_{ki}(E_i(x)) = E_k(D_i(E_i(x))) = E_k(y).
\]
For the right-hand side:
\[
T_{kj}(T_{ji}(E_i(x))) = T_{kj}(E_j(D_i(E_i(x)))) = T_{kj}(E_j(y)) = E_k(D_j(E_j(y))) = E_k(\Phi_j(y)).
\]
Subtracting gives~\eqref{eq:cocycle-error-approx}.

For (1): the right-hand side $E_k(y) - E_k(\Phi_j(y))$ vanishes when $\Phi_j(y) = y$, since $E_k$ is well defined at both points. Conversely, if it vanishes for all such $x$ in some open subset, and $E_k$ is locally injective on $O_k$ (which follows from (i) and (ii) of Theorem~\ref{thm:sign-cocycle-stability} combined with $\sigma_{\min}(d E_k) > 0$, ensured for instance by the Jacobian regularity loss), then $\Phi_j(y) = y$.

For (2): by the Lipschitz bound (i) on $E_k$ on $O_k$,
\[
\|E_k(y) - E_k(\Phi_j(y))\| \leq L_E \cdot \|\Phi_j(y) - y\|.
\]
Lemma~\ref{lem:off-manifold-reconstruction} bounds $\|\Phi_j(y) - y\| \leq (L_E L_D + 2)\varepsilon$, giving~\eqref{eq:cocycle-error-bound}.
\end{proof}

\begin{remark}[Interpretation]\label{rem:cocycle-interpretation}
Theorems~\ref{thm:cocycle-error-exact} and~\ref{thm:cocycle-error-approx} together explain why an explicit cocycle term is unnecessary in the loss: the cocycle error is bounded by $L_E (L_E L_D + 2)\,\varepsilon$, so $\mathcal{L}_{\mathrm{recon}} \to 0$ forces $\mathcal{L}_{\mathrm{cocycle}} \to 0$ at the same rate, with no additional optimisation pressure required.

Moreover, the reconstruction error of chart $i$ enters only by determining \emph{where} chart $j$'s reconstruction is evaluated (at $y$ instead of $x$); the cocycle error itself is controlled entirely by chart $j$'s failure to satisfy $\Phi_j = \mathrm{Id}$ at $y$. Chart $k$ contributes only through the encoder $E_k$, which transports the ambient-space discrepancy $\Phi_j(y) - y$ into a latent-space discrepancy via $d E_k$.
\end{remark}
\begin{corollary}[Implicit cocycle control via reconstruction]\label{cor:cocycle-from-recon}
For an $(\varepsilon, \eta)$-approximate autoencoder atlas satisfying the regularity bounds of Theorem~\ref{thm:sign-cocycle-stability}, the integrand of $\mathcal{L}_{\mathrm{cocycle}}$ in Definition~\ref{def:cocycle-loss} is bounded pointwise, on every triple $(i,j,k)$ with $x \in U_i \cap U_j \cap U_k$, by
\[
\bigl\|T_{ki}(E_i(x)) - T_{kj}(T_{ji}(E_i(x)))\bigr\|^2 \leq L_E^2 (L_E L_D + 2)^2\, \varepsilon^2.
\]
Consequently
\[
\mathcal{L}_{\mathrm{cocycle}} \leq T_{\mathcal{U}}\, L_E^2 (L_E L_D + 2)^2\, \varepsilon^2,
\]
where $T_{\mathcal{U}} \coloneqq \sup_{x \in M}\#\{(i,j,k) : x \in U_i \cap U_j \cap U_k\}$ is the maximum number of triple overlaps meeting any single point.
\end{corollary}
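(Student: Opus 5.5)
The plan is to derive the pointwise bound directly from Theorem~\ref{thm:cocycle-error-approx}, then sum over triples and take expectations. Fix $x \in M$ and a triple $(i,j,k)$ with $x \in U_i \cap U_j \cap U_k$, and set $y \coloneqq D_i(E_i(x))$. By Remark~\ref{rem:domain-verification}, together with Definition~\ref{def:approximate-atlas}(5), we have $y \in O_j \cap O_k$ and $\Phi_j(y) \in O_k$. Hence every term in the integrand is defined, and the identity~\eqref{eq:cocycle-error-approx} expresses the integrand as $\|E_k(y) - E_k(\Phi_j(y))\|^2$.

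Next I would invoke the bound~\eqref{eq:cocycle-error-bound}. This uses the operator-norm bound (i) on $dE_k$ over $O_k$ to control $\|E_k(y)-E_k(\Phi_j(y))\|$ by $L_E\|\Phi_j(y)-y\|$, and Lemma~\ref{lem:off-manifold-reconstruction} to bound $\|\Phi_j(y)-y\|$ by $(L_EL_D+2)\varepsilon$. Squaring the resulting inequality gives the claimed pointwise estimate $L_E^2(L_EL_D+2)^2\varepsilon^2$, uniformly in $x$ and in the triple.

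One point needs care: passing from a bound on $\|dE_k\|_{\mathrm{op}}$ to a Lipschitz bound requires the segment $[y,\Phi_j(y)]$ to lie in $O_k$. Here I would use that both endpoints lie within distance $R\varepsilon$ of the point $x\in U_k$. The closed $R\varepsilon$-ball about $x$ is convex and contained in $\overline{B_{\R^N}(U_k,R\varepsilon)}\subset O_k$, so the whole segment lies in $O_k$ and the mean value inequality applies. The same remark justifies the Lipschitz constant $L_{E_j}L_{D_j}$ inside Lemma~\ref{lem:off-manifold-reconstruction}, because the segment $[x,y]$ lies in the $\varepsilon$-ball about $x \in U_j$. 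This domain check is the only real obstacle; everything else is bookkeeping.

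Finally, for the aggregate bound, note that at each $x$ the sum in Definition~\ref{def:cocycle-loss} has nonzero terms only for triples with $\rho_{ijk}(x)=1$. There are at most $T_{\mathcal{U}}$ such triples, and each contributes at most $L_E^2(L_EL_D+2)^2\varepsilon^2$. The integrand is therefore bounded pointwise by $T_{\mathcal{U}}L_E^2(L_EL_D+2)^2\varepsilon^2$. Since $T_{\mathcal{U}}$ is finite because $I$ is finite, taking expectation over $x\sim M$ of this constant bound yields the stated inequality for $\mathcal{L}_{\mathrm{cocycle}}$.
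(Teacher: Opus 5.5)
Your proposal is correct and follows the paper's proof: it squares the bound~\eqref{eq:cocycle-error-bound} of Theorem~\ref{thm:cocycle-error-approx}, bounds the number of active triples at each $x$ by $T_{\mathcal{U}}$, and then takes the expectation. Your convexity argument, that the segments $[y,\Phi_j(y)]$ and $[x,y]$ lie in closed balls about $x$ contained in $O_k$ and $O_j$, makes explicit the passage from the derivative bounds to Lipschitz bounds that the paper uses without comment.
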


\begin{proof}
The pointwise bound is the squared form of~\eqref{eq:cocycle-error-bound} in Theorem~\ref{thm:cocycle-error-approx}. Substituting into Definition~\ref{def:cocycle-loss} and using $\sum_{i,j,k} \rho_{ijk}(x) \leq T_{\mathcal{U}}$ for every $x$ gives the second bound.
\end{proof}

This is the precise mathematical content of the empirical observation in Section~\ref{sec:experiments} that minimising $\mathcal{L}_{\mathrm{recon}}$ alone already drives $\mathcal{L}_{\mathrm{cocycle}}$ to small values without an explicit cocycle term.

\subsection*{Jacobian regularity loss}

For the linearised transition maps $g_{ji}(x) = d(T_{ji})_{E_i(x)}$ to be well-defined elements of $\GL_d(\R)$, the encoders must be local diffeomorphisms, requiring the encoder Jacobian to have full rank.

\begin{definition}[Jacobian regularity loss]\label{def:jac-loss}
Let $J_{E_i}(x) = \frac{\partial E_i}{\partial x}(x) \in \R^{d \times N}$ be the encoder Jacobian, and let $\sigma_{\min}(A)$ denote the smallest singular value of $A$. For a threshold $\epsilon > 0$,
\[
\mathcal{L}_{\mathrm{jac}} = \mathbb{E}_{x \sim M}\left[ \sum_{i=1}^{n} \rho_i(x) \cdot \max\bigl( 0,\, \epsilon - \sigma_{\min}(J_{E_i}(x)) \bigr) \right].
\]
\end{definition}

This loss vanishes when $\sigma_{\min}(J_{E_i}(x)) \geq \epsilon$ for all $x \in U_i$, ensuring the encoder Jacobian has full row rank.

\begin{remark}[Effect on transition map non-degeneracy]
Combining the Jacobian regularity loss with smooth activations (e.g.\ $\tanh$) ensures that the encoder $E_i$ is a local diffeomorphism on $U_i$. Combined with a symmetric decoder regularity property, Proposition~\ref{prop:nondeg-bound} then gives a quantitative lower bound on the non-degeneracy gap $\delta(\mathcal{A})$, which is the operative condition for the stability theorems of Section~\ref{sec:stability}. The Jacobian regularity loss is therefore the principal optimisation lever for enforcing the hypotheses of Theorem~\ref{thm:sign-cocycle-stability}.
\end{remark}

\subsection*{Total loss}

The total loss is
\[
\mathcal{L}_{\mathrm{total}} = \mathcal{L}_{\mathrm{recon}} + \lambda_{\mathrm{jac}}\, \mathcal{L}_{\mathrm{jac}},
\]
where $\lambda_{\mathrm{jac}} \geq 0$ is a hyperparameter. By Corollary~\ref{cor:cocycle-from-recon}, no separate cocycle term is needed: $\mathcal{L}_{\mathrm{recon}} \to 0$ automatically drives $\mathcal{L}_{\mathrm{cocycle}} \to 0$ at the same rate.

\section{Experiments}\label{sec:experiments}

We validate our theoretical framework on manifolds with known orientability: the 2-sphere $S^2$ (orientable), the M\"obius band (non-orientable), the Klein bottle in $\R^4$ (non-orientable), and $\mathbb{R}P^2$ represented as line-patch images in $\R^{100}$ (non-orientable). The experiments demonstrate that (i) reconstruction loss alone enforces cocycle consistency without explicit regularisation (Lemma~\ref{lemma:cocycle-from-reconstruction} and Corollary~\ref{cor:cocycle-from-recon}); (ii) the sign cocycle correctly classifies orientability via the coboundary test (Corollary~\ref{cor:practical-orientability}); (iii) the diagnostic quantities $\eta$ and $\delta$ reliably distinguish successful from failed training; and (iv) the local stability theorem (Theorem~\ref{thm:local-stability}) explains the practical detection regime more accurately than the global Theorem~\ref{thm:sign-cocycle-stability}.

\subsection{Experimental setup}\label{sec:exp-setup}

\paragraph{Architecture.}
Each chart autoencoder consists of an encoder $E_i \colon \R^N \to \R^d$ and decoder $D_i \colon \R^d \to \R^N$, each with two hidden layers (32 and 16 units for the encoder, 16 and 32 for the decoder) and $\tanh$ activations. The latent dimension $d = 2$ matches the intrinsic dimension of all test manifolds.

\paragraph{Training.}
We optimise with Adam, learning rate $10^{-3}$, batch size $64$, for $1000$--$5000$ epochs. Each chart autoencoder is trained only on points assigned to its domain $U_i$. We use reconstruction loss alone, with no explicit cocycle regularisation, testing the prediction of Corollary~\ref{cor:cocycle-from-recon}. All experiments are repeated over $5$ random seeds; we report mean $\pm$ standard deviation.

\paragraph{Metrics.}
For each experiment we report quantities corresponding to the theoretical objects of Sections~\ref{sec:stability}--\ref{sec:loss-functions}:
\begin{itemize}
    \item \textbf{Reconstruction error} $\varepsilon = \sup_x \|D_i(E_i(x)) - x\|$: the quantity in Definition~\ref{def:approximate-atlas}(3).
    \item \textbf{Differential error} $\eta_{\mathrm{lat}} = \sup_x \|d(E_i \circ D_i)_{E_i(x)} - I_d\|_{\mathrm{op}}$: a computable diagnostic that proxies the tangent-restricted error $\eta$ of Definition~\ref{def:approximate-atlas}(4). The two quantities measure related but distinct objects. The quantity $\eta$ is the operator norm of $d(D_i \circ E_i)_x|_{T_xM} - \Id_{T_xM}$, controlling the ambient reconstruction map along tangent directions; $\eta_{\mathrm{lat}}$ is the operator norm of $d(E_i \circ D_i)_{E_i(x)} - I_d$ on $\R^d$. Both vanish on an exact autoencoder atlas, and on a well-trained smooth approximate atlas in low codimension they track each other closely. In high codimension $N - d$, training imperfections allow the decoder image $D_i(\R^d)$ to acquire components in directions normal to $M$; the subsequent encoder projection then inflates $\eta_{\mathrm{lat}}$ even when the cocycle-relevant tangent behaviour is comparatively well controlled. We therefore report $\eta_{\mathrm{lat}}$ as a diagnostic and interpret it with codimension in mind.
    \item \textbf{Non-degeneracy gap} $\delta = \min_{i,j,x} |\det g_{ji}(x)|$: the operative quantity in Theorems~\ref{thm:sign-cocycle-stability} and~\ref{thm:local-stability}.

    \item \textbf{Pairwise compatibility error} $\|T_{ji}(E_i(x))-E_j(x)\|$: measures the pariwise reconstruction-induced discrepancy. In the approximate setting it is bounded by $L_E\varepsilon$. This is the quantity we report in our experiments. It is related to the cocycle error of Definition~\ref{def:cocycle-loss} by Theorem~\ref{thm:cocycle-error-approx}.
\end{itemize}

\subsection{The 2-sphere: orientable manifold with good cover}\label{sec:exp-sphere}

\paragraph{Data and cover.}
We sample $n = 1000$ points uniformly from $S^2 \subset \R^3$ by drawing $x \sim \mathcal{N}(0, I_3)$ and normalising. We construct a four-chart good cover from the vertices of an inscribed regular tetrahedron
\[
v_0 = \tfrac{1}{\sqrt 3}(1,1,1),\quad v_1 = \tfrac{1}{\sqrt 3}(1,-1,-1),\quad v_2 = \tfrac{1}{\sqrt 3}(-1,1,-1),\quad v_3 = \tfrac{1}{\sqrt 3}(-1,-1,1),
\]
with $U_i = \{x \in S^2 : \langle x, v_i\rangle > -0.3\}$. The nerve of this cover is $\partial \Delta^3 \simeq S^2$, so by the Nerve Theorem, $\check{H}^*(\mathcal{U}; \Z/2)$ computes $H^*(S^2; \Z/2)$.

\begin{table}[ht]
\centering
\caption{Theoretical metrics for the $S^2$ experiment, computed from
a $4$-chart autoencoder atlas whose chart domains
$U_i=\{x\in S^2:\langle x,v_i\rangle>-0.3\}$ are induced by the vertices
of an inscribed regular tetrahedron in $\R^3$ (good cover; nerve
$\partial\Delta^3\simeq S^2$). All quantities are reported as
mean $\pm$ standard deviation over $5$ independent training runs.
Symbols: $\varepsilon$, the maximum per-chart sup reconstruction error
$\sup_{x\in U_i}\|D_i(E_i(x))-x\|$; $\bar\varepsilon$, the empirical mean
reconstruction error $\mathbb{E}_{x\sim M}\|D_i(E_i(x))-x\|$;
$\eta_{\mathrm{lat}}$, the latent-side differential proxy
$\sup_x\|d(E_i\circ D_i)_{E_i(x)}-I_d\|_{\mathrm{op}}$ used as a
practical estimator of $\eta$ in
Definition~\ref{def:approximate-atlas}(4); $\delta$, the global
non-degeneracy gap $\min_{i,j,x}|\det g_{ji}(x)|$ over all overlaps;
``Compatibility err'', the pairwise transition residual
$\|T_{ji}(E_i(x))-E_j(x)\|$; and $\sigma_{\min}(dE)$, the minimum
singular value of the encoder Jacobian, $\min_i\sigma_{\min}(dE_i)$.
The bottom row reports the success rate of orientability detection
via the coboundary test of Corollary~\ref{cor:practical-orientability}.}
\label{tab:s2_metrics}
\begin{tabular}{lcc}
\toprule
Metric & Value & Theoretical role \\
\midrule
$\varepsilon$            & $0.032 \pm 0.008$ & $\sup_x \|D_i(E_i(x)) - x\|$ \\
$\bar\varepsilon$        & $0.007 \pm 0.001$ & $\mathbb{E}_x \|D_i(E_i(x)) - x\|$ \\
$\eta_{\mathrm{lat}}$    & $0.54  \pm 0.19$  & $\sup_x \|d(E_i \circ D_i)_{E_i(x)} - I_d\|_{\mathrm{op}}$ \\
$\delta$                 & $0.101 \pm 0.016$ & $\min_{i,j,x} |\det g_{ji}(x)|$ \\
Compatibility err        & $0.008 \pm 0.001$ & $\|T_{ji}(E_i(x)) - E_j(x)\|$ \\
$\sigma_{\min}(dE)$      & $0.66  \pm 0.05$  & $\min_i \sigma_{\min}(dE_i)$ \\
\midrule
\multicolumn{3}{c}{\textbf{Orientability detection: 100\% (5/5 trials)}} \\
\bottomrule
\end{tabular}
\end{table}

\paragraph{Results.}
As reported in Table~\ref{tab:s2_metrics}, the hypotheses of Theorem~\ref{thm:sign-cocycle-stability} are satisfied: $\eta_{\mathrm{lat}} < 1$, $\delta \approx 0.10 > 0$, and the pairwise compatibility error is $O(10^{-3})$, consistent with the encoder-Lipschitz bound $\|T_{ji}(E_i(x)) - E_j(x)\| \leq L_E\,\varepsilon$. The coboundary test finds a consistent orientation assignment $(\nu_0, \nu_1, \nu_2, \nu_3) = (+1, -1, +1, -1)$ satisfying $\omega_{ji} = \nu_j \cdot \nu_i$ on all six pairwise overlaps, confirming $[\omega] = 0 \in \check{H}^1(S^2; \Z/2)$. By Corollary~\ref{cor:practical-orientability}, $S^2$ is correctly detected as orientable in all five trials.

\subsection{The M\"obius band: non-orientable manifold with disconnected overlap}\label{sec:exp-mobius}

\paragraph{Data and cover.}
We sample $n = 1500$ points from the standard immersion of the M\"obius band in $\R^3$:
\[
x(u, v) = \bigl(1 + \tfrac{v}{2}\cos\tfrac{u}{2}\bigr)\cos u,\quad
y(u, v) = \bigl(1 + \tfrac{v}{2}\cos\tfrac{u}{2}\bigr)\sin u,\quad
z(u, v) = \tfrac{v}{2}\sin\tfrac{u}{2},
\]
with $u \in [0, 2\pi)$, $v \in [-1, 1]$. We use a two-chart cover by partitioning along the $y$-coordinate:
\[
U_0 = \{x \in M : y(x) > -0.3\},\qquad U_1 = \{x \in M : y(x) < 0.3\}.
\]
The overlap $U_0 \cap U_1$ consists of \emph{two disconnected components} related by the M\"obius twist. By Remark~\ref{rem:relaxed-nerve}, each connected component is treated separately in the cocycle computation.

\begin{table}[ht]
\centering
\caption{Theoretical metrics for the M\"obius-band experiment,
computed from a $2$-chart autoencoder atlas with chart domains
$U_0=\{y(x)>-0.3\}$ and $U_1=\{y(x)<0.3\}$ on a sample of $n=1500$
points from the standard immersion of the M\"obius band in $\R^3$.
The overlap $U_0\cap U_1$ consists of two disconnected components
related by the M\"obius twist; each is treated as a separate
$1$-simplex of the nerve as per Remark~\ref{rem:relaxed-nerve}.
All quantities are reported as mean $\pm$ standard deviation over
$5$ independent training runs.
Symbols: $\varepsilon$, the maximum per-chart sup reconstruction error
$\sup_{x\in U_i}\|D_i(E_i(x))-x\|$; $\bar\varepsilon$, the empirical mean
reconstruction error; $\eta_{\mathrm{lat}}$, the latent-side
differential proxy $\sup_x\|d(E_i\circ D_i)_{E_i(x)}-I_d\|_{\mathrm{op}}$;
$\delta$, the global non-degeneracy gap
$\min_{i,j,x}|\det g_{ji}(x)|$; ``Compatibility err'', the pairwise
transition residual $\|T_{ji}(E_i(x))-E_j(x)\|$; and $\sigma_{\min}(dE)$,
the minimum singular value of the encoder Jacobian. The bottom row
reports the success rate of non-orientability detection by the
coboundary test of Corollary~\ref{cor:practical-orientability}:
no assignment of $\nu_0,\nu_1\in\{\pm 1\}$ can satisfy
$\omega_{10}=\nu_1\cdot\nu_0$ on both overlap components
simultaneously, certifying $w_1\neq 0$.}
\label{tab:mobius_metrics}
\begin{tabular}{lcc}
\toprule
Metric & Value & Theoretical role \\
\midrule
$\varepsilon$            & $0.098 \pm 0.018$ & $\sup_x \|D_i(E_i(x)) - x\|$ \\
$\bar\varepsilon$        & $0.021 \pm 0.002$ & $\mathbb{E}_x \|D_i(E_i(x)) - x\|$ \\
$\eta_{\mathrm{lat}}$    & $0.47  \pm 0.12$  & $\sup_x \|d(E_i \circ D_i)_{E_i(x)} - I_d\|_{\mathrm{op}}$ \\
$\delta$                 & $0.36  \pm 0.18$  & $\min_{i,j,x} |\det g_{ji}(x)|$ \\
Compatibility err        & $0.027 \pm 0.004$ & $\|T_{ji}(E_i(x)) - E_j(x)\|$ \\
$\sigma_{\min}(dE)$      & $0.55  \pm 0.09$  & $\min_i \sigma_{\min}(dE_i)$ \\
\midrule
\multicolumn{3}{c}{\textbf{Non-orientability detection: 100\% (5/5 trials)}} \\
\bottomrule
\end{tabular}
\end{table}

\paragraph{Detection mechanism.}
On the two components of $U_0 \cap U_1$, the sign cocycle takes opposite values:
\[
\omega_{10}|_{\text{Component 0}} = -1,\qquad \omega_{10}|_{\text{Component 1}} = +1.
\]
For $\omega_{10}$ to be a coboundary, we would need $\nu_0, \nu_1 \in \{\pm 1\}$, each constant on the connected charts $U_0, U_1$, with $\omega_{10}(x) = \nu_1 \cdot \nu_0$ on both components simultaneously. This is impossible: one component requires $\nu_1 \cdot \nu_0 = -1$, the other $+1$. By Proposition~\ref{prop:main-orientability} and Corollary~\ref{cor:practical-orientability}, $w_1(\mathcal{T}_{\mathcal{A}}) \neq 0$, correctly detecting non-orientability in all five trials. The metrics summarised in Table~\ref{tab:mobius_metrics} confirm detection on all five trials.

\begin{figure}[htbp]
    \centering
    \begin{subfigure}[t]{0.48\textwidth}
        \centering
        \includegraphics[width=\linewidth]{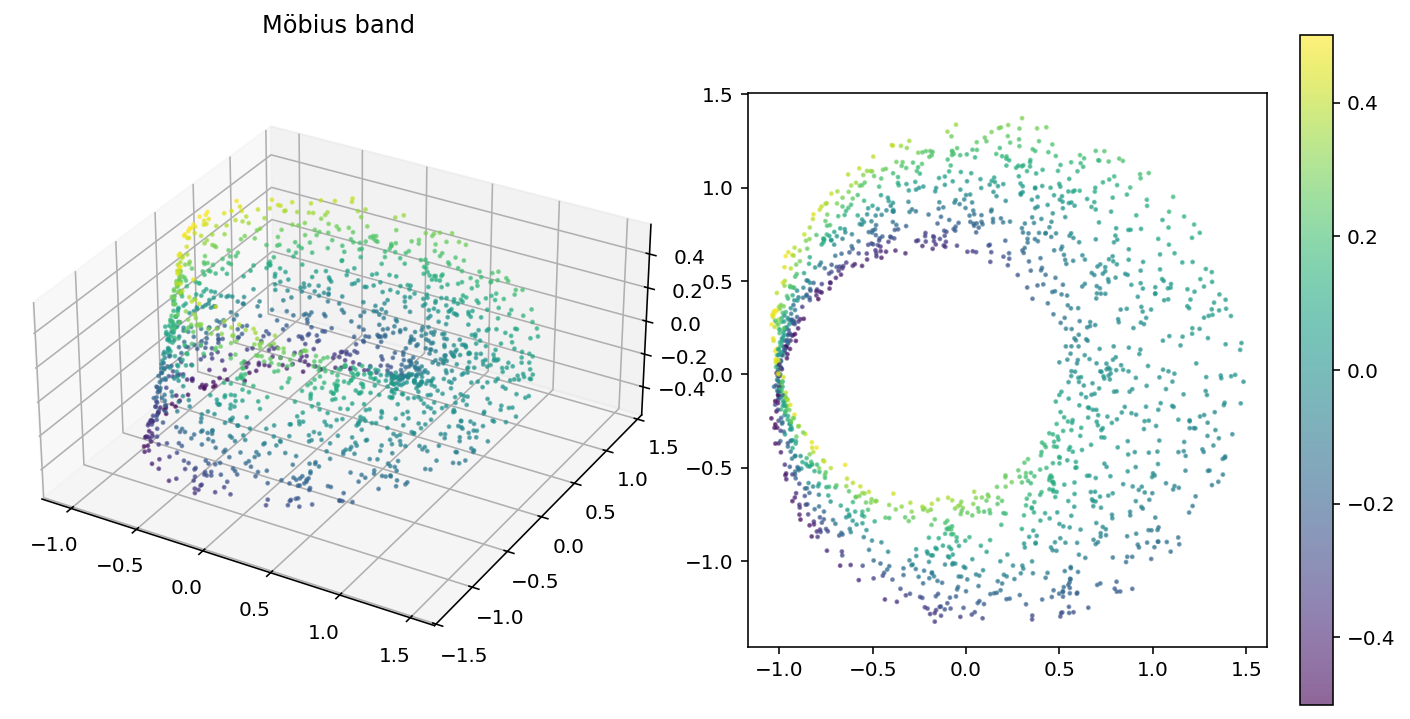}
        \caption{Sampled M\"obius band in $\R^3$ with two-chart cover.}
        \label{fig:mobius-data}
    \end{subfigure}
    \hfill
    \begin{subfigure}[t]{0.48\textwidth}
        \centering
        \includegraphics[width=\linewidth]{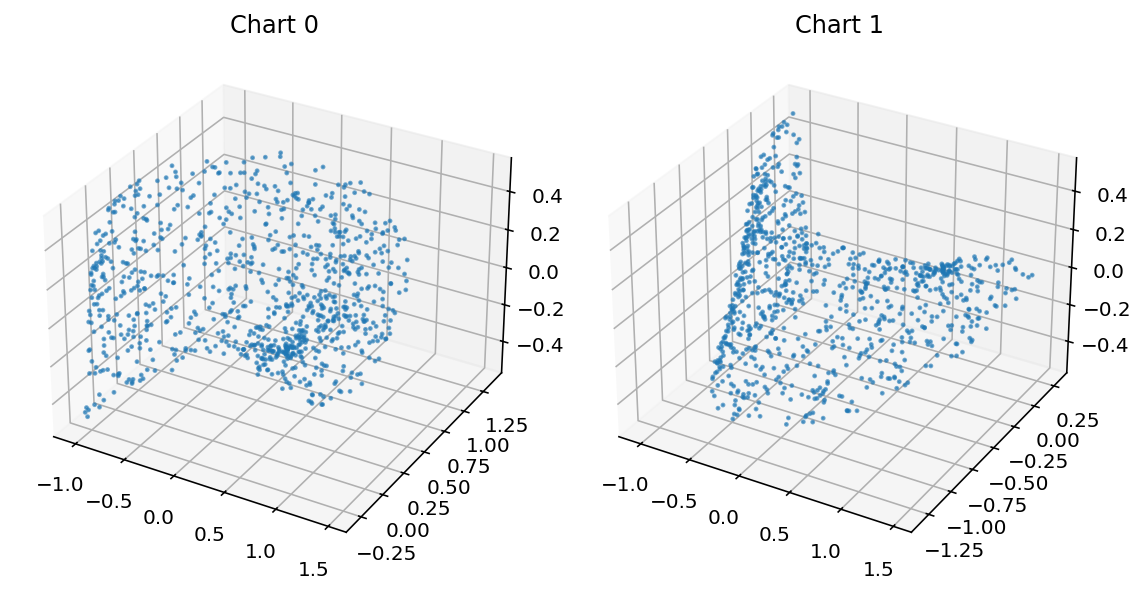}
        \caption{Chart domains and decomposition of $U_0 \cap U_1$ into two components.}
        \label{fig:mobius-charts}
    \end{subfigure}

    \vspace{0.4cm}

    \begin{subfigure}[t]{0.35\textwidth}
        \centering
        \includegraphics[width=\linewidth]{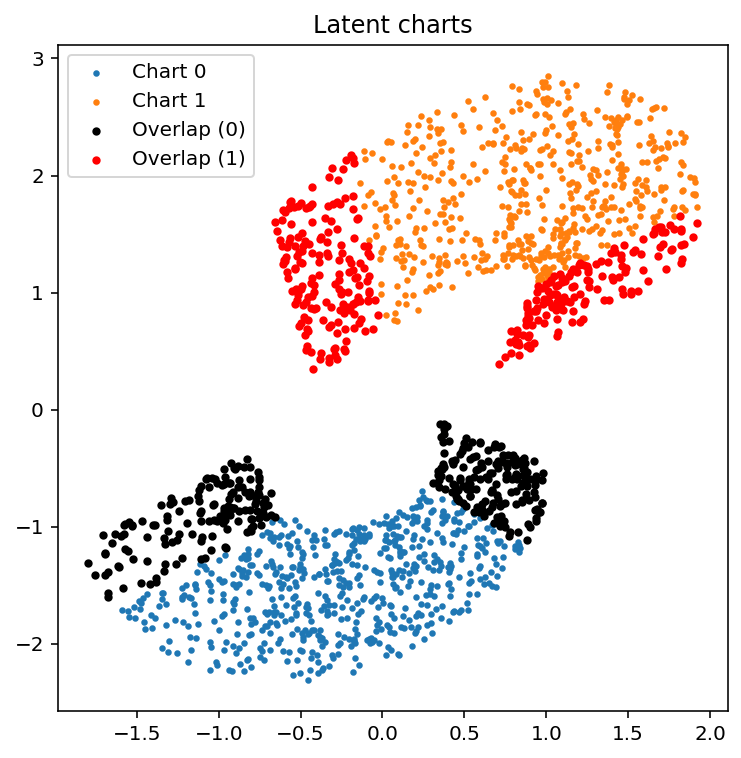}
        \caption{Latent representations $E_0(U_0)$ and $E_1(U_1)$ in $\R^2$.}
        \label{fig:mobius-latent}
    \end{subfigure}
    \hfill
    \begin{subfigure}[t]{0.35\textwidth}
        \centering
        \includegraphics[width=\linewidth]{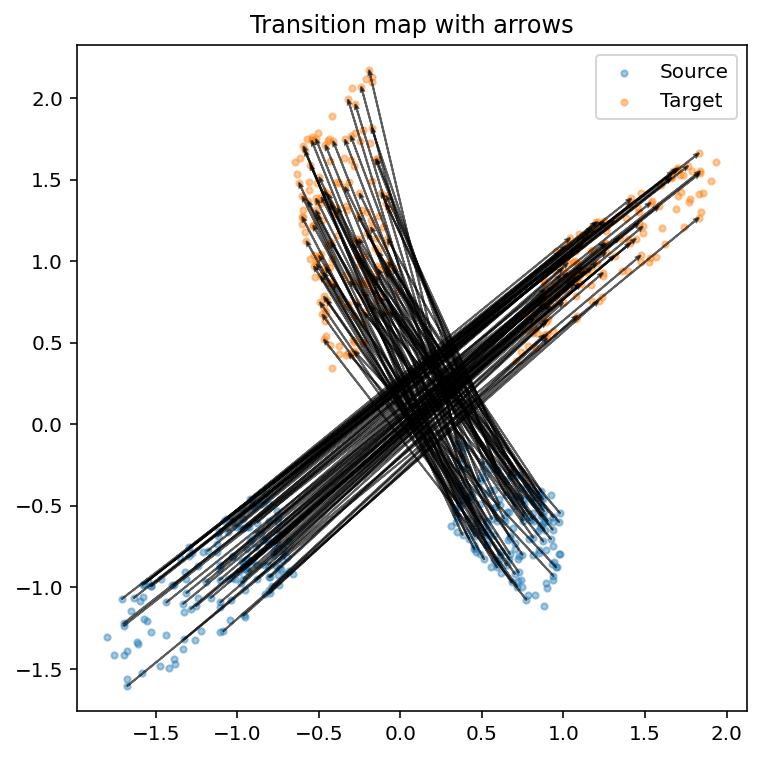}
        \caption{Transition map $T_{10}$ on the two overlap components.}
        \label{fig:mobius-transitions}
    \end{subfigure}

    \caption{Autoencoder atlas for the M\"obius band. The two-chart cover produces an overlap with two disconnected components on which the sign cocycle takes opposite values, detecting non-orientability.}
    \label{fig:mobius-experiment}
\end{figure}

\subsection{Higher-dimensional manifolds: Klein bottle and $\mathbb{R}P^2$}\label{sec:exp-higher-dim}

These experiments test the framework on manifolds where covers are learned from data and training convergence is not guaranteed. 

\paragraph{Cover construction versus cover certification.}
Cover construction from data is a well-studied problem~\cite{singhMC07,scoccola2025coverlearninglargescaletopology}, but cover \emph{certification} — verifying contractibility of each chart and each connected component of each finite intersection from finite samples — is a separate and presently open problem. Our pipeline uses heuristic constructions (landmark-based geodesic balls; DBSCAN-based connected-component decomposition) and does not certify the good-cover property. Theorem~\ref{thm:good-cover} therefore enters our experiments as a hypothesis we cannot fully verify; the diagnostic role of $\delta$ and $\eta$ provides post-training detection of regimes where the framework is unreliable. We regard this as acceptable for two reasons: the theoretical framework degrades gracefully when the good-cover condition is only approximately satisfied (Remark~\ref{rem:relaxed-nerve}), and principled cover learning lies outside the scope of this paper. 

A key finding is that not all training runs produce valid 
atlases. We identify three diagnostic criteria that reliably 
distinguish successful from failed runs \emph{before} 
examining orientability results:
\begin{enumerate}
    \item \textbf{Per-chart differential error compatible 
    with the local stability theorem.}  The global 
    Theorem~\ref{thm:sign-cocycle-stability} requires $\eta<1$ which, in our case we approximate as 
    $\eta_{\mathrm{lat},i} < 1$ on \emph{every} chart (which in low codimension closely tracks the cocycle-relevant $\eta_i$).  
    The local Theorem~\ref{thm:local-stability} is 
    strictly weaker: via the re-indexing of 
    Remark~\ref{rem:reindexing}, a single chart with 
    $\eta_{\mathrm{lat},i} \gg 1$ can be placed in the 
    encoder-only slot of every triangle, provided all 
    other charts satisfy $\eta_{\mathrm{lat},j} < 1$.  
    Multiple charts with $\eta_{\mathrm{lat}} \geq 1$ 
    cannot be simultaneously hidden and indicate that 
    neither stability theorem applies.
    
    \item \textbf{Positive non-degeneracy gap.} $\delta > 0$ 
    should be verifiable post-training; small $\delta$ 
    signals that the sign cocycle is unreliable.
    
    \item \textbf{No mixed signs within overlap components.}  
    Both stability theorems require $\delta > 0$, i.e.\ that 
    $\det g_{ji}(x)$ never vanishes on any overlap.  Since 
    $\delta$ is computed at finitely many sample points, a 
    positive sampled value does not guarantee the true 
    $\delta$ is positive: the determinant may pass through 
    zero between samples.  Mixed signs within a single 
    connected overlap component provide direct evidence of 
    this: by the intermediate value theorem, if 
    $\mathrm{sign}(\det g_{ji})$ takes both values $\pm 1$ 
    on a connected set, then $\det g_{ji}$ vanishes 
    somewhere in that set, and the true $\delta = 0$ on 
    that overlap regardless of the sampled minimum.  We 
    therefore classify a trial as non-converged whenever 
    any overlap component exhibits mixed signs.
\end{enumerate}
The mixed-signs diagnostic is intertwined with the 
    good-cover hypothesis.  Sign constancy on connected 
    overlap components follows from the intermediate value 
    theorem applied to the continuous function 
    $x \mapsto \det g_{ji}(x)$, but ``connected'' here 
    refers to the true manifold topology, which our 
    pipeline approximates via DBSCAN clustering on the 
    point cloud.  Two distinct failure modes can produce 
    mixed signs within a single DBSCAN component:
    \begin{enumerate}[label=(\alph*)]
        \item \emph{Genuine zero crossing 
        ($\delta = 0$).}  The transition map Jacobian 
        determinant passes through zero within a truly 
        connected overlap region, violating the 
        non-degeneracy hypothesis of both stability 
        theorems.  This indicates a poorly trained chart.
        \item \emph{Incorrectly merged components 
        (cover artefact).}  Two genuinely disconnected 
        overlap components --- on which opposite signs are 
        the expected topological signal for a non-orientable 
        manifold --- are merged into one by an overly 
        coarse DBSCAN threshold.  This is a failure of the 
        connected-component decomposition, not of the 
        atlas, and is related to the uncertified good-cover 
        hypothesis: if the decomposition does not correctly 
        resolve the connected components of each 
        intersection, the \v{C}ech complex is 
        misrepresented.
    \end{enumerate}
    In our experiments, case~(a) and case~(b) can be 
    distinguished by the sampled $\delta$: genuine zero 
    crossings produce small $\delta$ on the affected 
    overlaps, whereas decomposition artefacts typically 
    have healthy $\delta$ since the determinant is 
    well-behaved on each true component.  In the 
    non-converged Klein bottle trials, the mixed-sign 
    overlaps consistently involve the outlier chart and 
    have small sampled $\delta$ ($\approx 0.03$), 
    supporting interpretation~(a).  We classify a trial 
    as non-converged whenever any overlap component 
    exhibits mixed signs accompanied by small $\delta$; 
    mixed signs with large $\delta$ would instead 
    indicate a decomposition artefact requiring 
    refinement of the DBSCAN threshold.

\subsubsection{Klein bottle}\label{sec:exp-klein}

\paragraph{Data, cover, and training.}
We sample $n = 1000$ points from the standard immersion of the Klein bottle in $\R^4$,
\[
\iota(u, v) = \bigl((m + \cos v)\cos u,\, (m + \cos v)\sin u,\, \sin v \cos(u/2),\, \sin v \sin(u/2)\bigr),
\]
with $m = 4$ and the identification $(u, v) \sim (u + 2\pi, 2\pi - v)$. We construct an $8$-chart cover by farthest-point sampling on the $100$-nearest-neighbour geodesic graph, with chart $U_i$ consisting of all sample points whose geodesic distance to landmark $i$ is below the 20th percentile of the landmark distance matrix. The retry mechanism is best-effort: when a run fails to reach $\varepsilon < \varepsilon_{\mathrm{thresh}}$ within 3 retries (as in trial~0), we record the final state and rely on the post-training diagnostics (criteria~(1)--(3) above) to flag the run as non-converged.

\begin{table}[ht]
\centering
\caption{Klein bottle: per-trial results over $5$ independent runs of an
$8$-chart autoencoder atlas. Charts are obtained by farthest-point
sampling on the $100$-nearest-neighbour geodesic graph of $n=1000$ points
drawn from the standard immersion of the Klein bottle in $\R^4$
(see Section~\ref{sec:exp-klein}); chart $U_i$ collects sample points
whose geodesic distance to landmark~$i$ falls below the $20$th
percentile of the landmark distance matrix. Columns:
``Seed'' is the random seed of the training run; $\varepsilon$ is the
maximum over charts of the sup reconstruction error
$\sup_{x\in U_i}\|D_i(E_i(x))-x\|$; $\bar\varepsilon$ is the empirical
mean reconstruction error; $\eta_{\mathrm{lat}}$ is the
maximum over charts of the latent-side differential proxy
$\sup_x\|d(E_i\circ D_i)_{E_i(x)}-I_d\|_{\mathrm{op}}$;
$\delta$ is the global non-degeneracy gap
$\min_{i,j,x}|\det g_{ji}(x)|$; $\sigma_{\min}(dE)$ is the minimum
singular value of the encoder Jacobian across charts;
``Detected'' is the orientability verdict returned by the coboundary
test of Corollary~\ref{cor:practical-orientability}; and
``Correct'' (\cmark/\xmark) indicates agreement with the ground
truth (non-orientable). Trials~$0$ and~$2$ are classified as
\emph{non-converged} by the post-training diagnostics
(criteria~(1)--(3) of Section~\ref{sec:exp-klein}): they exhibit
the highest reconstruction error and overlap components with mixed
signs in $\mathrm{sign}(\det g_{ji})$, indicating that the transition
map Jacobian determinant passes through zero within the affected
overlap.}
\label{tab:klein_per_trial}
\resizebox{\textwidth}{!}{%
\begin{tabular}{ccccccccc}
\toprule
Trial & Seed & $\varepsilon$ & $\bar\varepsilon$ & $\eta_{\mathrm{lat}}$ & $\delta$ & $\sigma_{\min}(dE)$ & Detected & Correct \\
\midrule
0 & 42 & 0.182 & 0.018 & 31.11 & 0.008 & 0.236 & Orientable     & \xmark \\
1 & 43 & 0.109 & 0.025 & 1.51  & 0.083 & 0.207 & Non-orientable & \cmark \\
2 & 44 & 0.141 & 0.018 & 6.04  & 0.016 & 0.240 & Orientable     & \xmark \\
3 & 45 & 0.131 & 0.025 & 0.62  & 0.090 & 0.212 & Non-orientable & \cmark \\
4 & 46 & 0.104 & 0.022 & 1.23  & 0.053 & 0.258 & Non-orientable & \cmark \\
\bottomrule
\end{tabular}}
\end{table}

\paragraph{Diagnostic analysis.}
\emph{Among the converged Klein bottle trials, the local 
theorem extends coverage beyond the global theorem.} Table~\ref{tab:klein_per_trial} reports the per-trial outcome. Only one of 
the three converged trials (seed 45) satisfy 
$\eta_{\mathrm{lat}} < 1$ across all charts and are covered by 
the global Theorem~\ref{thm:sign-cocycle-stability}.  Trial 4 (seed~46) has a single outlier chart with 
$\eta_{\mathrm{lat}} = 1.23$, while all remaining charts 
satisfy $\eta_{\mathrm{lat},i} \leq 0.91 < 1$, the same happen for trial 1 (seed 43).  The global 
theorem is violated:
\begin{center}
\begin{tabular}{ccc}
\toprule
Trial & $\eta_{\mathrm{lat}}$ (chart 4) & $\max_{i \neq 4}\, \eta_i$ \\
\midrule
0 (seed 42, \xmark) & 31.11 & 0.62 \\
2 (seed 44, \xmark) & 6.04  & 0.67 \\
\midrule
1 (seed 43, \cmark) & 0.71  & 1.51 \\
3 (seed 45, \cmark) & 0.60  & 0.62 \\
4 (seed 46, \cmark) & 1.23  & 0.91 \\
\bottomrule
\end{tabular}
\end{center}

Under the re-indexing of 
Remark~\ref{rem:reindexing}, placing the outlier chart in 
the encoder-only ($\gamma$-)slot, all 11~triangles of the 
nerve satisfy the per-triple condition 
$\eta^{(\alpha,\beta,\gamma)} < 1$, no overlap component 
exhibits mixed signs, and the per-triple non-degeneracy gaps 
remain positive.  
Theorem~\ref{thm:local-stability} therefore applies, and 
non-orientability is correctly detected.  The two 
non-converged trials (seeds 42, 44) exhibit the highest reconstruction error values and a qualitatively 
different failure mode: overlap components involving the 
outlier chart contain mixed signs in 
$\mathrm{sign}(\det g_{ji})$, indicating that the 
transition map Jacobian determinant passes through zero 
within those overlaps.  The per-triple non-degeneracy gaps 
on the affected triangles are correspondingly small 
($\delta^{(ijk)} \approx 0.008$ and $0.016$), and neither 
the global Theorem~\ref{thm:sign-cocycle-stability} nor the 
local Theorem~\ref{thm:local-stability} applies --- both 
require $\delta > 0$.  These trials are correctly excluded 
by the convergence diagnostics, confirming that the 
non-degeneracy gap, rather than the differential 
reconstruction error, is the binding constraint in this 
experiment. The two non-converged trials (seeds 42, 44) are identified 
by criterion~(3): overlap components involving the outlier 
chart exhibit mixed signs in $\mathrm{sign}(\det g_{ji})$, 
with small minorities (e.g.\ 1 out of 49 points, or 1 out 
of 92) taking the opposite sign from the majority.  While 
the sampled $\delta$ remains nominally positive 
($\delta \approx 0.008$ and $0.016$), the mixed signs 
establish via the intermediate value theorem that the true 
$\delta = 0$ on those overlaps, and neither stability 
theorem applies. The transition maps for trial 1 (seed 43) can be seen in Figure~\ref{fig:klein-transitions_supplement}.

\paragraph{Converged trials.}
Restricting to the three correct trials (seeds 43, 45, 46), the coboundary test correctly fails: no consistent $\{\nu_i\}$ assignment exists. Aggregate metrics on these converged trials are shown in Table~\ref{tab:klein_metrics}.

\begin{table}[ht]
\centering
\caption{Theoretical metrics for the Klein-bottle experiment,
restricted to the $3$ of $5$ trials (seeds $43$, $45$, $46$) that
satisfy the post-training convergence diagnostics. The atlas is the
$8$-chart cover described in Section~\ref{sec:exp-klein}; values are
mean $\pm$ standard deviation over the $3$ converged trials.
Symbols: $\varepsilon$, the maximum per-chart sup reconstruction error
$\sup_{x\in U_i}\|D_i(E_i(x))-x\|$; $\bar\varepsilon$, the empirical
mean reconstruction error; $\eta_{\mathrm{lat}}$, the latent-side
differential proxy
$\sup_x\|d(E_i\circ D_i)_{E_i(x)}-I_d\|_{\mathrm{op}}$ for the
differential reconstruction error $\eta$ of
Definition~\ref{def:approximate-atlas}(4); $\delta$, the global
non-degeneracy gap $\min_{i,j,x}|\det g_{ji}(x)|$; ``Compatibility err'',
the pairwise transition residual $\|T_{ji}(E_i(x))-E_j(x)\|$; and
$\sigma_{\min}(dE)$, the minimum singular value of the encoder
Jacobian across charts. The coboundary test of
Corollary~\ref{cor:practical-orientability} fails on all $3$
converged trials, correctly certifying non-orientability.}
\label{tab:klein_metrics}
\begin{tabular}{lcc}
\toprule
Metric & Value & Theoretical role \\
\midrule
$\varepsilon$            & $0.1147 \pm 0.0116$ & $\sup_x \|D_i(E_i(x)) - x\|$ \\
$\bar\varepsilon$        & $0.0240 \pm 0.0013$ & $\mathbb{E}_x \|D_i(E_i(x)) - x\|$ \\
$\eta_{\mathrm{lat}}$    & $1.1201 \pm 0.3729$ & $\sup_x \|d(E_i \circ D_i)_{E_i(x)} - I_d\|_{\mathrm{op}}$ \\
$\delta$                 & $0.0756 \pm 0.0160$ & $\min_{i,j,x} |\det g_{ji}(x)|$ \\
Compatibility err        & $0.0134 \pm 0.0010$ & $\|T_{ji}(E_i(x)) - E_j(x)\|$ \\
$\sigma_{\min}(dE)$      & $0.2256 \pm 0.0227$ & $\min_i \sigma_{\min}(dE_i)$ \\
\bottomrule
\end{tabular}
\end{table}

\subsubsection{$\mathbb{R}P^2$ via line patches}\label{sec:exp-rp2}

\paragraph{Data, cover, and training.}
We generate $5625$ line-patch images ($10 \times 10$ grayscale, ambient dimension $100$, see Fig.~\ref{fig:rp2-experiment}) following the construction in~\cite{10.1007/s00454-017-9927-2}: each image is a blurred line segment, and since a line at angle $\theta$ is identical to a line at angle $\theta + \pi$, the persistent cohomology of this space is consistent with $\mathbb{R}P^2$. We use a $10$-chart cover from projective landmarks, no Jacobian regularisation ($\lambda_{\mathrm{jac}} = 0$), $\tanh$ activations, and $1000$ training epochs.

\begin{figure}[htbp]
    \centering
    \begin{subfigure}[t]{0.4\textwidth}
        \centering
        \includegraphics[width=\linewidth]{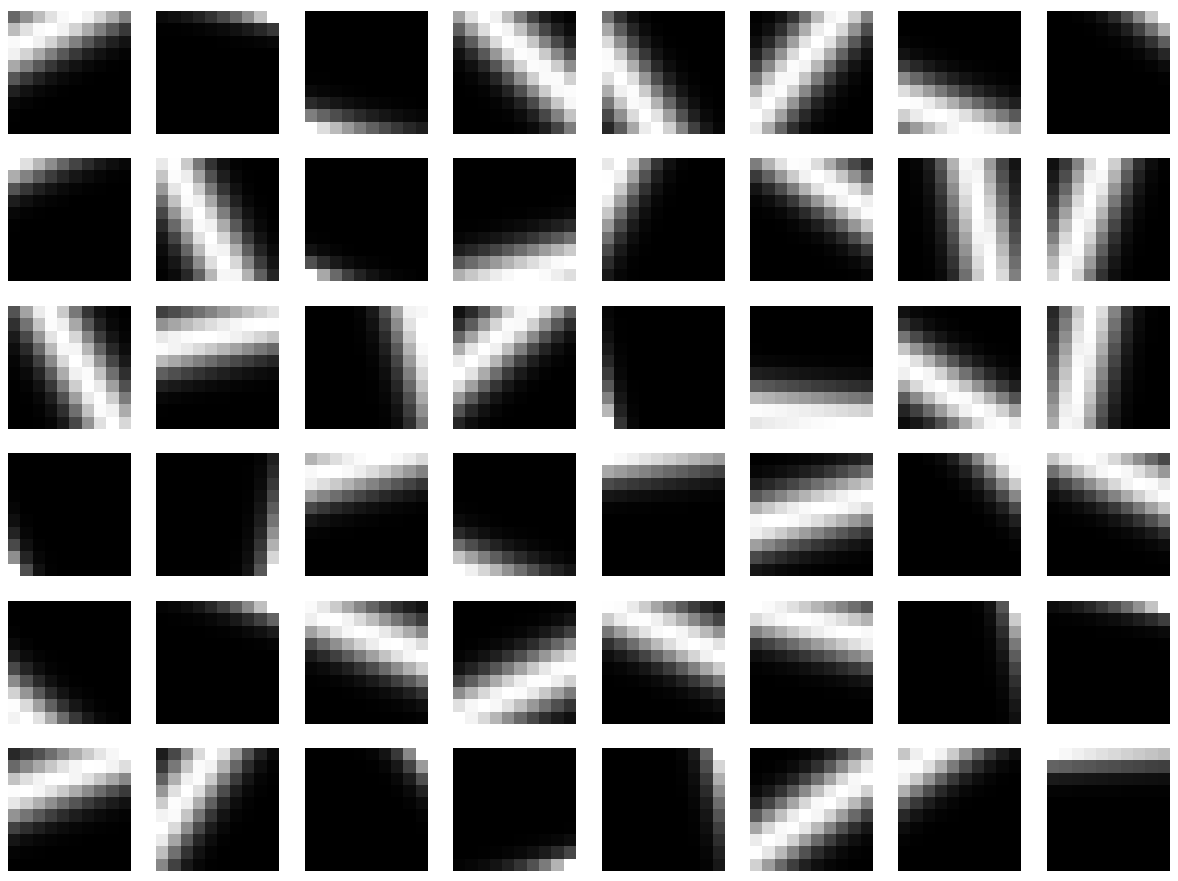}
        \caption{Sample line patch images.}
        \label{fig:rp2-patches}
    \end{subfigure}
    \quad
    \begin{subfigure}[t]{0.45\textwidth}
        \centering
        \includegraphics[width=\linewidth]{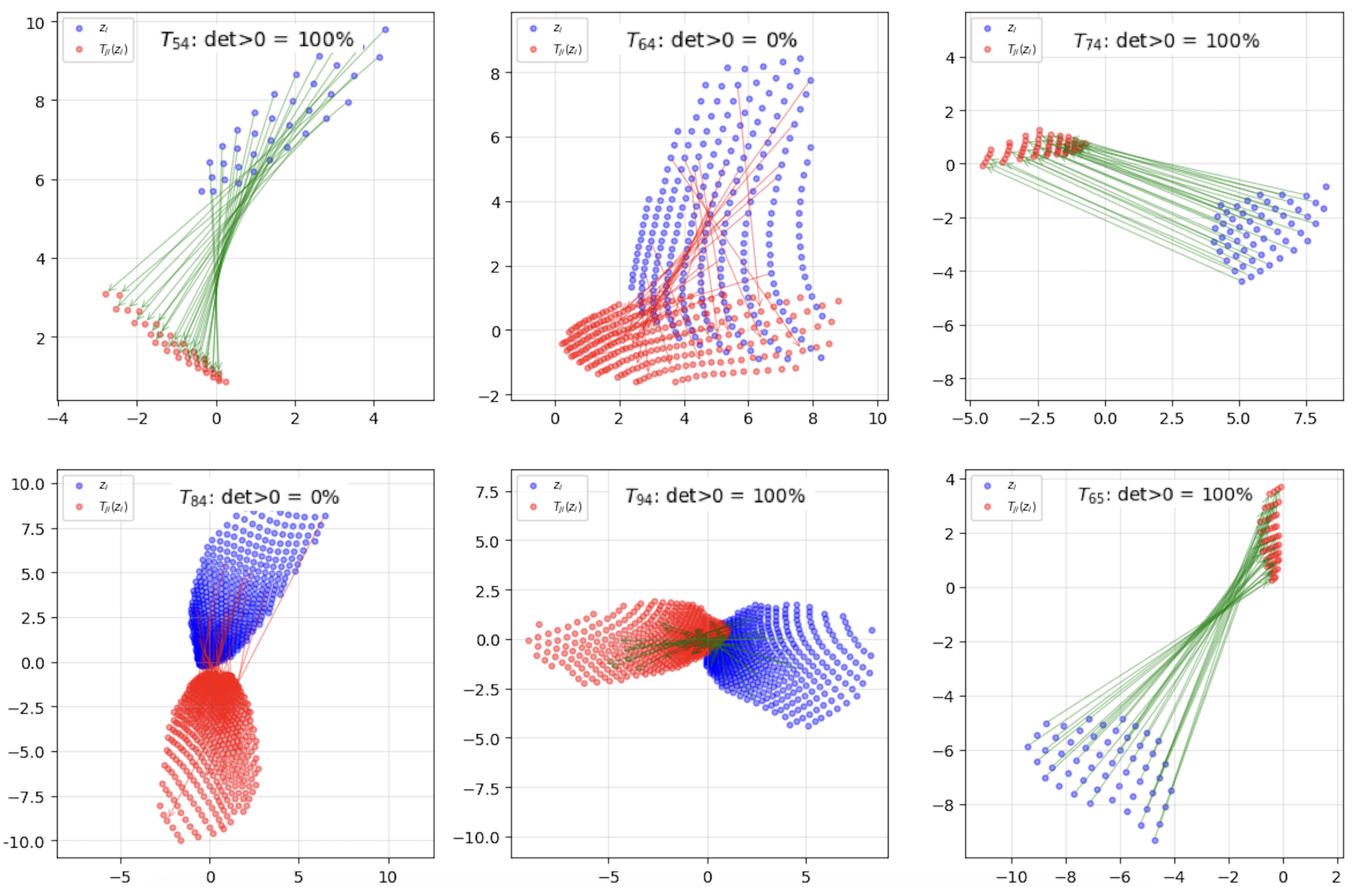}
        \caption{Representative transition maps.}
        \label{fig:rp2-sample-transitions}
    \end{subfigure}
    \caption{The $\mathbb{RP}^2$ line patches experiment. (a)~Sample $10\times10$ grayscale patches. (b)~Selected transition maps $T_{ji}=E_j\circ D_i$: source points (blue) and images (red) in $\mathbb{R}^2$. The complete set of all pairwise transitions is in Figure~\ref{fig:rp2-transitions_supplement}.}
    \label{fig:rp2-experiment}
\end{figure}

\paragraph{Results.}
Of five training runs, four satisfied the convergence criteria. In all four, the coboundary test correctly failed, detecting non-orientability. The sign distributions across the 25 overlap components are balanced but asymmetric (11:14, 16:9, 12:13, 15:10 across the four converged trials), confirming that the nerve graph admits no consistent 2-colouring. Aggregate metrics across the four converged trials are reported in Table~\ref{tab:rp2_converged}.

\begin{table}[ht]
\centering
\caption{$\mathbb{R}P^2$ line patches: summary metrics for converged
trials. Data are $5625$ blurred $10\times 10$ line-patch images
(ambient dimension $N=100$) following~\cite{10.1007/s00454-017-9927-2};
because a line at angle $\theta$ is identical to a line at angle
$\theta+\pi$, the resulting point cloud has the persistent cohomology
of $\mathbb{R}P^2$ (intrinsic dimension $d=2$). The atlas uses a
$10$-chart cover from projective landmarks; training uses $\tanh$
activations, no Jacobian regularisation ($\lambda_{\mathrm{jac}}=0$),
and $1000$ epochs. Of $5$ training runs, $4$ satisfy the post-training
convergence diagnostics; each entry below is the per-trial value
(e.g.\ for $\eta_{\mathrm{lat}}$, the maximum over charts), summarised
as mean $\pm$ standard deviation over the $4$ converged trials.
Symbols: $\varepsilon$, sup reconstruction error
$\sup_{x\in U_i}\|D_i(E_i(x))-x\|$; $\bar\varepsilon$, empirical mean
reconstruction error; $\eta_{\mathrm{lat}}$, latent-side differential
proxy $\sup_x\|d(E_i\circ D_i)_{E_i(x)}-I_d\|_{\mathrm{op}}$;
$\delta$, global non-degeneracy gap $\min_{i,j,x}|\det g_{ji}(x)|$;
``Compatibility error'', pairwise transition residual
$\|T_{ji}(E_i(x))-E_j(x)\|$; $\sigma_{\min}(dE)$, minimum singular value
of the encoder Jacobian across charts. The global hypotheses of
Theorem~\ref{thm:sign-cocycle-stability} ($\eta_{\mathrm{lat}}<1$) are
violated; nonetheless $\delta>0$ holds and the coboundary test
correctly fails on all $4$ converged trials, certifying
non-orientability. The discrepancy is explained by the
codimension-driven gap between $\eta_{\mathrm{lat}}$ and~$\eta$
discussed in Section~\ref{sec:exp-rp2}.}
\label{tab:rp2_converged}
\begin{tabular}{lc}
\toprule
Metric & Value \\
\midrule
$\varepsilon$ (sup reconstruction error)        & $0.309 \pm 0.081$ \\
$\bar\varepsilon$ (mean reconstruction error)   & $0.060 \pm 0.003$ \\
$\eta_{\mathrm{lat}}$ (differential error)      & $12.90 \pm 6.91$  \\
$\delta$ (non-degeneracy gap)                   & $0.027 \pm 0.010$ \\
Compatibility error                             & $0.042 \pm 0.008$ \\
$\sigma_{\min}(dE)$ (encoder regularity)        & $0.541 \pm 0.064$ \\
\bottomrule
\end{tabular}
\end{table}

These trials have $\eta_{\mathrm{lat}} \gg 1$ across the board, so the global hypotheses of Theorem~\ref{thm:sign-cocycle-stability} are violated. Nevertheless, $\delta > 0$ and sign consistency hold within each overlap, and correct detection is achieved. We analyse this regime through the lens of Theorem~\ref{thm:local-stability} in the next subsection.

\paragraph{Diagnosis analysis.} \emph{The $\mathbb{R}P^2$ results are not explained by 
either stability theorem, but are consistent with a 
codimension-driven gap between $\eta_{\mathrm{lat}}$ 
and~$\eta$.}  In the low-codimension experiments ($S^2$, 
M\"obius band, Klein bottle), the ambient dimension $N$ 
exceeds the intrinsic dimension $d$ by at most~$2$, and 
$\eta_{\mathrm{lat}} \approx \eta$: the latent round-trip 
diagnostic faithfully tracks the tangent-restricted 
reconstruction quality that enters the stability theorems.  
In the $\mathbb{R}P^2$ experiment, the codimension is 
$N - d = 98$.  Here $\eta_{\mathrm{lat}} \gg 1$ uniformly 
across all charts, so the re-indexing mechanism of 
Remark~\ref{rem:reindexing} does not apply and neither 
stability theorem can be invoked.  However, 
$\eta_{\mathrm{lat}}$ and $\eta$ measure different objects: 
$\eta_{\mathrm{lat}}$ is the operator norm of 
$d(E_i \circ D_i)_{E_i(x)} - I_d$ on all of~$\R^d$, whereas 
$\eta$ is the operator norm of 
$d(D_i \circ E_i)_x|_{T_xM} - \Id_{T_xM}$ restricted to 
tangent directions.  When the codimension is large, 
imperfections in the decoder allow its image to acquire 
components in normal directions; the subsequent encoder 
projection annihilates these components, inflating 
$\eta_{\mathrm{lat}}$ even when the tangent-restricted 
behaviour is well controlled.  The successful detection in 
the $\mathbb{R}P^2$ trials ($\delta > 0$, no mixed signs, 
correct non-orientability on all four converged runs) is 
therefore consistent with the tangent-restricted $\eta$ 
satisfying the stability bounds even though 
$\eta_{\mathrm{lat}}$ does not.  Measuring $\eta$ directly 
--- rather than through the latent proxy 
$\eta_{\mathrm{lat}}$ --- is a concrete instrumentation task 
for future work that would determine whether the stability 
theorems apply in this high-codimension regime.

\paragraph{Role of Jacobian regularisation.}
Ablation on the Klein bottle without Jacobian regularisation gives 0\% accuracy (no run satisfies the convergence criteria). On $\mathbb{R}P^2$, convergence is achieved on 4 of 5 trials without regularisation. The role of $\mathcal{L}_{\mathrm{jac}}$ thus depends on the cover structure and data geometry, consistent with its role of enforcing the non-degeneracy hypothesis of Theorem~\ref{thm:sign-cocycle-stability} when sample geometry is otherwise insufficient.

Figure~\ref{fig:rp2-transitions_supplement} in Appendix~\ref{app:proofs} shows the full panel of pairwise transition maps for the line-patch experiment; the mix of orientation-preserving and orientation-reversing transitions confirms the non-trivial sign cocycle.

\subsection{Summary and discussion}\label{sec:exp-summary}

\begin{table}[ht]
\centering
\caption{Summary of all four orientability-detection experiments
reported in Section~\ref{sec:experiments}, restricted to runs that
satisfy the post-training convergence diagnostics. Columns:
``Manifold'' lists the test manifold; ``Dim'' is its intrinsic
dimension~$d$; ``Ambient'' is the ambient space $\R^N$ used for
sampling (or, for $\mathbb{R}P^2$, the line-patch image space);
``Ground truth'' is the known orientability class of the manifold;
$\bar\varepsilon=\mathbb{E}_x\|D_i(E_i(x))-x\|$ is the empirical mean
reconstruction error; $\delta=\min_{i,j,x}|\det g_{ji}(x)|$ is the
global non-degeneracy gap; ``Converged'' is the number of converged
runs out of $5$; and ``Accuracy'' is the proportion of converged
runs on which the coboundary test of
Corollary~\ref{cor:practical-orientability} returns the correct
orientability verdict. Across the four manifolds, $17$ of $20$ runs
converge and the coboundary test is correct on all $17$ of them.}
\label{tab:summary_all}
\begin{tabular}{lccccccc}
\toprule
Manifold & Dim & Ambient & Ground truth & $\bar\varepsilon$ & $\delta$ & Converged & Accuracy \\
\midrule
$S^2$                       & 2 & $\R^3$    & Orientable     & 0.007 & 0.101 & 5/5 & 100\% \\
M\"obius band               & 2 & $\R^3$    & Non-orientable & 0.021 & 0.36  & 5/5 & 100\% \\
Klein bottle                & 2 & $\R^4$    & Non-orientable & 0.024 & 0.076 & 3/5 & 100\% \\
$\mathbb{R}P^2$ (patches)   & 2 & $\R^{100}$& Non-orientable & 0.060 & 0.027 & 4/5 & 100\% \\
\bottomrule
\end{tabular}
\end{table}

Aggregating across all four manifolds (Table~\ref{tab:summary_all}), the experiments support four conclusions:

\begin{enumerate}
    \item \textbf{Theoretical correctness.} When atlas diagnostics are satisfied, orientability detection achieves 100\% accuracy across all tested manifolds, validating Corollary~\ref{cor:practical-orientability}.
    \item \textbf{Cocycle consistency from reconstruction.} All converged trials achieved cocycle errors below 0.08 using reconstruction loss alone.
    \item \textbf{Practical diagnostics.} Moderate per-chart $\eta_{\mathrm{lat},i}$ and positive $\delta$ provide a reliable method for assessing atlas validity \emph{before} the coboundary test, without requiring knowledge of the ground truth.
    \item \textbf{Scalability.} The $\mathbb{R}P^2$ experiment demonstrates correct detection in ambient dimension $\R^{100}$, validating the framework for high-dimensional image and signal data.
\end{enumerate}

\paragraph{Computational considerations.}
The Jacobian computation $g_{ji}(x) = d(T_{ji})_{E_i(x)}$ uses automatic differentiation through the encoder--decoder composition; for $\tanh$ activations this is well-conditioned in any ambient dimension. The coboundary test reduces to checking whether the nerve graph admits a consistent 2-colouring, solvable in linear time. The overall cost is dominated by autoencoder training.

\section{Conclusions and future work}\label{sec:conclusions}

We have introduced a framework that treats collections of locally trained autoencoders as learned atlases on data manifolds, connecting neural network representations to classical vector bundle theory. The key insight is that reconstruction consistency alone enforces the cocycle condition on transition maps, so that linearising these transitions yields a vector bundle whose first Stiefel--Whitney class can be read off from signs of Jacobian determinants. The stability theory developed in Section~\ref{sec:stability} establishes this on two levels: Theorem~\ref{thm:sign-cocycle-stability} gives a global stability result under uniform regularity bounds, and Theorem~\ref{thm:local-stability} refines this to a per-triple statement in which the differential reconstruction condition is required on only two of the three charts of each triangle of the nerve. Theorem~\ref{thm:w1-agreement} then identifies the cohomology class of the learned sign cocycle with $w_1(TM)$. Corollary~\ref{cor:cocycle-from-recon} shows that the pointwise cocycle error is bounded by the reconstruction error at the same rate, explaining why no explicit cocycle term is needed in the loss; and Corollary~\ref{cor:min-charts} identifies the minimum chart count with the covering type. Experiments on $S^2$, the M\"obius band, the Klein bottle, and $\mathbb{R}P^2$ line patches validate the framework, achieving 100\% orientability detection on converged trials without explicit cocycle regularisation. The re-indexing mechanism of Remark~\ref{rem:reindexing}, exploiting the asymmetric role 
of the three charts in the local theorem 
(Remark~\ref{rem:asymmetric}), extends the theoretical 
coverage of the stability framework beyond the global 
theorem in the Klein bottle experiment, where the global 
hypothesis $\eta < 1$ is violated but detection succeeds 
under per-triple analysis.  The $\mathbb{R}P^2$ experiment 
demonstrates correct detection in a regime where neither 
stability theorem can be invoked, suggesting that the 
tangent-restricted $\eta$ --- which is not directly 
measured --- may satisfy the stability bounds even when the 
latent proxy $\eta_{\mathrm{lat}}$ does not.  The full 
ordering independence of the resulting cohomology class is 
recovered through Theorem~\ref{thm:w1-agreement}.

Several natural extensions remain open. The most direct is extracting higher Stiefel--Whitney classes (beginning with $w_2$, the obstruction to spin structure) and Chern classes (in the complex setting, via $\det g_{ji} / |\det g_{ji}|$) from the full linearised transition cocycle. A second is the integration of principled cover learning methods~\cite{scoccola2025coverlearninglargescaletopology} to bridge the gap between good covers and data-driven covers, which would close one of the practical assumptions of our framework. Finally, scaling to higher intrinsic dimensions in applied domains such as robotics, materials science, or molecular configuration spaces would test the framework against data where the underlying manifold structure is not known a priori.

\section*{Acknowledgments}
Eduardo Paluzo-Hidalgo acknowledges funding from the European Union's Horizon 2020 research and innovation programme under the Marie Sk\l{}odowska-Curie grant agreement No.\ 101153039 (CHALKS), and by the IMUS--Mar\'ia de Maeztu grant CEX2024-001517-M (Apoyo a Unidades de Excelencia Mar\'ia de Maeztu), funded by MICIU/AEI/10.13039/501100011033.
Yuichi Ike is supported by JSPS Grant-in-Aid for Transformative Research Areas (A) Grant Number JP22H05107 and JST, CREST Grant Number JPMJCR24Q1, Japan.

Eduardo Paluzo-Hidalgo thanks Prof.\ Gunnar Carlsson for introducing the initial problem that inspired this paper during his research stay at Stanford University, and Dr.\ Chunyin Siu for insightful conversations during the early stages of this research. The authors acknowledge the use of AI tools to polish the written text.

\section*{Code availability}
\url{https://github.com/EduPH/learningtangentbundle}.

\bibliographystyle{plain}
\bibliography{references}

@misc{schonsheck2020chartautoencodersmanifoldstructured,
      title={Chart Auto-Encoders for Manifold Structured Data}, 
      author={Stefan Schonsheck and Jie Chen and Rongjie Lai},
      year={2020},
      eprint={1912.10094},
      archivePrefix={arXiv},
      primaryClass={cs.LG},
      url={https://arxiv.org/abs/1912.10094}, 
}

@misc{schonsheck2024semisupervisedmanifoldlearningcomplexity,
      title={Semi-Supervised Manifold Learning with Complexity Decoupled Chart Autoencoders}, 
      author={Stefan C. Schonsheck and Scott Mahan and Timo Klock and Alexander Cloninger and Rongjie Lai},
      year={2024},
      eprint={2208.10570},
      archivePrefix={arXiv},
      primaryClass={cs.LG},
      url={https://arxiv.org/abs/2208.10570}, 
}

@book{milnor1974characteristic,
   title={Characteristic Classes},
   author={Milnor, John W. and Stasheff, James D.},
   year={1974},
   publisher={Princeton University Press},
   series={Annals of Mathematics Studies},
   number={76}
 }

@unpublished{Zakharevich2024,
    author = {Zakharevich, Inna},
    title = {Topological K-theory and Characteristic Classes: A Homotopical Perspective},
    note = {\url{https://pi.math.cornell.edu/~zakh}}
}

@inproceedings{DBLP:conf/compgeom/ScoccolaP23,
  author       = {Luis Scoccola and
                  Jose A. Perea},
  editor       = {Erin W. Chambers and
                  Joachim Gudmundsson},
  title        = {FibeRed: Fiberwise Dimensionality Reduction of Topologically Complex
                  Data with Vector Bundles},
  booktitle    = {39th International Symposium on Computational Geometry, SoCG 2023,
                  Dallas, Texas, USA, June 12-15, 2023},
  series       = {LIPIcs},
  volume       = {258},
  pages        = {56:1--56:18},
  publisher    = {Schloss Dagstuhl - Leibniz-Zentrum f{\"{u}}r Informatik},
  year         = {2023},
  url          = {https://doi.org/10.4230/LIPIcs.SoCG.2023.56},
  doi          = {10.4230/LIPICS.SOCG.2023.56},
  bibsource    = {dblp computer science bibliography, https://dblp.org}
}

@article{10.1007/s00454-017-9927-2,
author = {Perea, Jose A.},
title = {Multiscale Projective Coordinates via Persistent Cohomology of Sparse Filtrations},
year = {2018},
issue_date = {January   2018},
publisher = {Springer-Verlag},
address = {Berlin, Heidelberg},
volume = {59},
number = {1},
issn = {0179-5376},
url = {https://doi.org/10.1007/s00454-017-9927-2},
doi = {10.1007/s00454-017-9927-2},
journal = {Discrete Comput. Geom.},
month = jan,
pages = {175–225},
numpages = {51}
}

@inproceedings{
scoccola2025coverlearninglargescaletopology,
title={Cover learning for large-scale topology representation},
author={Luis Scoccola and Uzu Lim and Heather A. Harrington},
booktitle={Forty-second International Conference on Machine Learning},
year={2025},
url={https://openreview.net/forum?id=fgxobShivL}
}

@article{doi:10.1137/22M1522711,
author = {Lim, Uzu and Oberhauser, Harald and Nanda, Vidit},
title = {Tangent Space and Dimension Estimation with the Wasserstein Distance},
journal = {SIAM Journal on Applied Algebra and Geometry},
volume = {8},
number = {3},
pages = {650-685},
year = {2024},
doi = {10.1137/22M1522711},

URL = { 
    
        https://doi.org/10.1137/22M1522711
    
    

},
eprint = { 
    
        https://doi.org/10.1137/22M1522711
    
    

}
}

@ARTICLE{11301037,
  author={Clémot, Mattéo and Digne, Julie and Tierny, Julien},
  journal={IEEE Transactions on Visualization and Computer Graphics}, 
  title={Topological Autoencoders++: Fast and Accurate Cycle-Aware Dimensionality Reduction}, 
  year={2025},
  volume={},
  number={},
  pages={1-17},
  doi={10.1109/TVCG.2025.3644671}}

@article{Tinarrage_2021,
   title={Computing persistent Stiefel–Whitney classes of line bundles},
   volume={6},
   ISSN={2367-1734},
   url={http://dx.doi.org/10.1007/s41468-021-00080-4},
   DOI={10.1007/s41468-021-00080-4},
   number={1},
   journal={Journal of Applied and Computational Topology},
   publisher={Springer Science and Business Media LLC},
   author={Tinarrage, Raphaël},
   year={2021},
   month=nov, pages={65–125} }

@misc{scoccola2023fiberedfiberwisedimensionalityreduction,
      title={FibeRed: Fiberwise Dimensionality Reduction of Topologically Complex Data with Vector Bundles}, 
      author={Luis Scoccola and Jose A. Perea},
      year={2023},
      eprint={2206.06513},
      archivePrefix={arXiv},
      primaryClass={cs.CG},
      url={https://arxiv.org/abs/2206.06513}, 
}

@phdthesis{yiding_2009,
    author = {Yi Ding},
    title = {Topological algorithms mapping point cloud data},
    school = {
Stanford University. Institute for Computational and Mathematical Engineering},
    year =  {2009}
}

@inproceedings{10.1109/SMI.2007.25,
author = {Zomorodian, Afra and Carlsson, Gunnar},
title = {Localized Homology},
year = {2007},
isbn = {0769528155},
publisher = {IEEE Computer Society},
address = {USA},
url = {https://doi.org/10.1109/SMI.2007.25},
doi = {10.1109/SMI.2007.25},
booktitle = {Proceedings of the IEEE International Conference on Shape Modeling and Applications 2007},
pages = {189–198},
numpages = {10},
series = {SMI '07}
}

@book{hatcher2002algebraic,
  address = {Cambridge},
  author = {Hatcher, Allen},
  isbn = {0-521-79160-X; 0-521-79540-0},
  mrclass = {55-01 (55-00)},
  mrnumber = {1867354 (2002k:55001)},
  mrreviewer = {Donald W. Kahn},
  pages = {xii+544},
  publisher = {Cambridge University Press},
  title = {Algebraic topology},
  username = {mwpb479},
  year = 2002
}

@inproceedings{singhMC07,
  author = {Singh, Gurjeet and Mémoli, Facundo and Carlsson, Gunnar E.},
  booktitle = {PBG@Eurographics},
  editor = {Botsch, Mario and Pajarola, Renato and Chen, Baoquan and Zwicker, Matthias},
  ee = {https://doi.org/10.2312/SPBG/SPBG07/091-100},
  isbn = {978-3-905673-51-7},
  pages = {91-100},
  publisher = {Eurographics Association},
  title = {Topological Methods for the Analysis of High Dimensional Data Sets and 3D Object Recognition.},
  year = {2007}
}

@article{Chen2011,
  title = {Hardness Results for Homology Localization},
  volume = {45},
  ISSN = {1432-0444},
  url = {http://dx.doi.org/10.1007/s00454-010-9322-8},
  DOI = {10.1007/s00454-010-9322-8},
  number = {3},
  journal = {Discrete \& Computational Geometry},
  publisher = {Springer Science and Business Media LLC},
  author = {Chen,  Chao and Freedman,  Daniel},
  year = {2011},
  month = jan,
  pages = {425–448}
}

@book{horn2012matrix,
  title = {Matrix Analysis},
  author = {Horn, Roger A. and Johnson, Charles R.},
  edition = {2nd},
  year = {2012},
  publisher = {Cambridge University Press},
  isbn = {978-0-521-54823-6}
}

@book{BottTu,
  title = {Differential Forms in Algebraic Topology},
  author = {Bott, Raoul and Tu, Loring W.},
  year = {1982},
  publisher = {Springer-Verlag},
  series = {Graduate Texts in Mathematics},
  number = {82},
  isbn = {978-0-387-90613-3},
  doi = {10.1007/978-1-4757-3951-0}
}

@article{doi:10.1137/24M1641312,
author = {Alvarado, Enrique and Belton, Robin and Fischer, Emily and Lee, Kang-Ju and Palande, Sourabh and Percival, Sarah and Purvine, Emilie},
title = {G-Mapper: Learning a Cover in the Mapper Construction},
journal = {SIAM Journal on Mathematics of Data Science},
volume = {7},
number = {2},
pages = {572-596},
year = {2025},
doi = {10.1137/24M1641312},

URL = { 
    
        https://doi.org/10.1137/24M1641312
    
    

},
eprint = { 
    
        https://doi.org/10.1137/24M1641312
    
    

}
}

@article{Scoccola_Perea_2023, title={Approximate and discrete Euclidean vector bundles}, volume={11}, DOI={10.1017/fms.2023.16}, journal={Forum of Mathematics, Sigma}, author={Scoccola, Luis and Perea, Jose A.}, year={2023}, pages={e20}}

@article{Karoubi2017,
  title = {On the covering type of a space},
  volume = {62},
  ISSN = {2309-4672},
  url = {http://dx.doi.org/10.4171/LEM/62-3/4-4},
  DOI = {10.4171/lem/62-3/4-4},
  number = {3},
  journal = {L’Enseignement Mathématique},
  publisher = {European Mathematical Society - EMS - Publishing House GmbH},
  author = {Karoubi,  Max and Weibel,  Charles A.},
  year = {2017},
  month = aug,
  pages = {457–474}
}

@inproceedings{
robinett2026atlasbased,
title={Atlas-based Manifold Representations for Interpretable Riemannian Machine Learning},
author={Ryan Allen Robinett and Sophia Madejski and Kyle Ruark and Samantha J. Riesenfeld and Lorenzo Orecchia},
booktitle={The 29th International Conference on Artificial Intelligence and Statistics},
year={2026},
url={https://openreview.net/forum?id=yTGh29HLlp}
}

@misc{yu2026learninggeometrytopologymultichart,
      title={Learning Geometry and Topology via Multi-Chart Flows}, 
      author={Hanlin Yu and Søren Hauberg and Marcelo Hartmann and Arto Klami and Georgios Arvanitidis},
      year={2026},
      eprint={2505.24665},
      archivePrefix={arXiv},
      primaryClass={cs.LG},
      url={https://arxiv.org/abs/2505.24665}, 
}

@article{Niyogi2008,
  author    = {Niyogi, Partha and Smale, Stephen and Weinberger, Shmuel},
  title     = {Finding the Homology of Submanifolds with High Confidence from Random Samples},
  journal   = {Discrete \& Computational Geometry},
  volume    = {39},
  number    = {1--3},
  pages     = {419--441},
  year      = {2008},
  doi       = {10.1007/s00454-008-9053-2},
  publisher = {Springer}
}

\begin{appendices}
\section{Proofs of Section~\ref{sec:stability}}\label{app:proofs}

\subsection{Auxiliary lemmas}

We first record two technical lemmas extracted from the proof of Theorem~\ref{thm:sign-cocycle-stability}, used inside and around it. The first is a sharp determinant perturbation bound; the second is the Lipschitz estimate $K_{\det}$ for $p \mapsto \det g_{ji}(p)$ used in Step~4.

\begin{lemma}[Determinant perturbation]\label{lem:det-perturbation}
For any $A, B \in \R^{d \times d}$,
\[
|\det(A + B) - \det(A)| \leq d \cdot \|B\|_{\mathrm{op}} \cdot (\|A\|_{\mathrm{op}} + \|B\|_{\mathrm{op}})^{d-1}.
\]
\end{lemma}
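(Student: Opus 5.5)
The plan is to expand the determinant multilinearly in the columns, using Hadamard's inequality to bound each resulting term. Write $a_1,\dots,a_d$ for the columns of $A$ and $b_1,\dots,b_d$ for those of $B$. The telescoping identity
\[
\det(A+B)-\det(A) = \sum_{m=1}^{d} \Bigl(\det(a_1+b_1,\dots,a_{m-1}+b_{m-1},\,a_m+b_m,\,a_{m+1},\dots,a_d) - \det(a_1+b_1,\dots,a_{m-1}+b_{m-1},\,a_m,\,a_{m+1},\dots,a_d)\Bigr)
\]
holds, and by linearity of the determinant in the $m$-th column each summand equals $\det(a_1+b_1,\dots,a_{m-1}+b_{m-1},\,b_m,\,a_{m+1},\dots,a_d)$. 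So the difference splits into exactly $d$ terms. Each term is a determinant with one column taken from $B$, its earlier columns taken from $A+B$, and its later columns taken from $A$.

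Next I will bound each of the $d$ terms by Hadamard's inequality $|\det(c_1,\dots,c_d)|\le\prod_\ell\|c_\ell\|$. Every column norm is dominated by the operator norm of the matrix it comes from:
\[
\|b_m\|=\|Be_m\|\le\|B\|_{\mathrm{op}},\qquad \|a_\ell\|\le\|A\|_{\mathrm{op}},\qquad \|a_\ell+b_\ell\|\le\|A\|_{\mathrm{op}}+\|B\|_{\mathrm{op}}.
\]
Each term is therefore at most $\|B\|_{\mathrm{op}}(\|A\|_{\mathrm{op}}+\|B\|_{\mathrm{op}})^{d-1}$, using $\|A\|_{\mathrm{op}}\le\|A\|_{\mathrm{op}}+\|B\|_{\mathrm{op}}$ for the $A$-columns. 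Summing the $d$ terms gives the claimed bound.

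The only step that needs care is the telescoping identity itself, specifically checking that consecutive summands cancel so that exactly $d$ terms remain. After that the argument is a routine application of Hadamard's inequality, which I will cite as standard (e.g.\ \cite{horn2012matrix}). As an alternative I could use the mean value theorem on $t\mapsto\det(A+tB)$ together with Jacobi's formula, but the column expansion avoids adjugate estimates.
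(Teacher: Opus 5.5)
Your proof is correct and follows the paper's argument: a telescoping multilinear expansion in the columns into $d$ terms, each with a single column of $B$, bounded by Hadamard's inequality together with the column-norm estimates $\|b_m\|\le\|B\|_{\mathrm{op}}$ and $\|a_\ell\|,\|a_\ell+b_\ell\|\le\|A\|_{\mathrm{op}}+\|B\|_{\mathrm{op}}$. The only difference is cosmetic: the paper puts columns of $A$ before the $B$-column and columns of $A+B$ after it, which is the mirror image of your ordering. Your explicit checks of the telescoping cancellation and of the column-norm bounds fill in steps the paper leaves implicit.
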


\begin{proof}
By multilinearity of the determinant in the columns,
\[
\det(A + B) - \det(A) = \sum_{l=1}^{d} \det\bigl[a_1 \mid \cdots \mid a_{l-1} \mid b_l \mid (a_{l+1} + b_{l+1}) \mid \cdots \mid (a_d + b_d)\bigr],
\]
where $a_l$, $b_l$ denote the $l$-th columns of $A$ and $B$ respectively. Each of the $d$ terms is the determinant of a matrix whose $l$-th column has norm at most $\|B\|_{\mathrm{op}}$ and whose other columns have norm at most $\|A\|_{\mathrm{op}} + \|B\|_{\mathrm{op}}$. By Hadamard's inequality~\cite[Theorem~3.3.16]{horn2012matrix}, the absolute value of each such determinant is at most $\|B\|_{\mathrm{op}} \cdot (\|A\|_{\mathrm{op}} + \|B\|_{\mathrm{op}})^{d-1}$. Summing yields the claim.
\end{proof}

\begin{lemma}[Lipschitz bound for $g_{ji}$ and $\det g_{ji}$]\label{lem:Ldet}
Let $\mathcal{A}$ be a $C^1$ approximate autoencoder atlas satisfying conditions~(i)--(iv) of Theorem~\ref{thm:sign-cocycle-stability}, and set
\[
C_g \coloneqq L_E\,(L_E L_D' + L_E' L_D^2).
\]
Then for all overlapping pairs $(i, j)$ and all $p, q \in O_i \cap O_j$,
\[
\|g_{ji}(p) - g_{ji}(q)\|_{\mathrm{op}} \leq C_g\,\|p - q\|,
\]
and consequently
\[
|\det g_{ji}(p) - \det g_{ji}(q)| \leq d\,C_g\,\|p - q\|\,\bigl(L_E L_D + C_g\,\|p - q\|\bigr)^{d-1}.
\]
\end{lemma}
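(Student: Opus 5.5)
We regard $g_{ji}(p)$ as the value at the point $p$ of the map $p \mapsto d(E_j)_{D_i(E_i(p))}\cdot d(D_i)_{E_i(p)}$. This is exactly the factorisation of Lemma~\ref{lemma:jacobian-factorisation}(1), extended from points of $U_i\cap U_j$ to points $p\in O_i\cap O_j$. For brevity write $z_p \coloneqq E_i(p)$ and $y_p \coloneqq D_i(z_p)$. The plan is to prove the operator-norm Lipschitz bound first, by a product-rule telescoping. The determinant bound then follows by applying Lemma~\ref{lem:det-perturbation} with $A = g_{ji}(q)$ and $B = g_{ji}(p)-g_{ji}(q)$.

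For the first bound, we split
\[
g_{ji}(p)-g_{ji}(q) = \bigl(d(E_j)_{y_p}-d(E_j)_{y_q}\bigr)\,d(D_i)_{z_p} + d(E_j)_{y_q}\,\bigl(d(D_i)_{z_p}-d(D_i)_{z_q}\bigr).
\]
The first term is controlled by condition~(ii) and then~(iii). We use the chain of Lipschitz estimates $\|y_p-y_q\| \le L_D\|z_p-z_q\| \le L_D L_E\|p-q\|$, where the Lipschitz constants come from the derivative bounds~(i),(iii) via the mean value inequality. This gives at most $L_E'\cdot L_D L_E\|p-q\|\cdot L_D = L_E L_E' L_D^2\|p-q\|$. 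The second term is controlled by~(i) and~(iv), giving at most $L_E\cdot L_D' L_E\|p-q\| = L_E^2 L_D'\|p-q\|$. Summing yields exactly $C_g = L_E(L_E L_D' + L_E' L_D^2)$, as stated.

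For the determinant estimate, Lemma~\ref{lem:det-perturbation} gives the bound $d\,\|B\|_{\mathrm{op}}\,(\|A\|_{\mathrm{op}}+\|B\|_{\mathrm{op}})^{d-1}$. We bound $\|A\|_{\mathrm{op}} = \|g_{ji}(q)\|_{\mathrm{op}} \le \|d(E_j)_{y_q}\|_{\mathrm{op}}\|d(D_i)_{z_q}\|_{\mathrm{op}} \le L_E L_D$ by~(i),(iii), and $\|B\|_{\mathrm{op}} \le C_g\|p-q\|$ by the first part. Monotonicity of $t\mapsto d\,t\,(L_EL_D+t)^{d-1}$ then gives the claim.

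The main obstacle is domain bookkeeping rather than algebra, and it has two parts.

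The first part concerns the mean value inequalities. Converting the pointwise derivative bounds (i),(iii) into Lipschitz bounds on $E_i$ and $D_i$ requires the segments $[p,q]$ and $[z_p,z_q]$ to lie in $O_i$ and $E_i(O_i)$ respectively. Convexity of these sets is not assumed. I would handle this in one of two ways. The first option is to interpret conditions (i)--(iv) as they are used in the proof of Theorem~\ref{thm:sign-cocycle-stability}, namely as Lipschitz hypotheses, since (ii) and (iv) are already stated globally on $O_k$ and $E_i(O_i)$. The second option, where needed, is to restrict to pairs $p,q$ joined by a segment inside the domain; this is the case actually used in Step~4, where the segment $[x,y]$ lies in the tube guaranteed by condition~(v) and Definition~\ref{def:approximate-atlas}(5).

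The second part is that $E_j$ must be defined at $y_p$ and $y_q$, i.e.\ these points must lie in $O_j$. I would justify this as in Remark~\ref{rem:domain-verification}, or else simply take it as part of the meaning of "$g_{ji}(p)$ is defined".
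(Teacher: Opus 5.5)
Your proof is correct and follows the paper's argument. Both use the factorisation $g_{ji}(p)=d(E_j)_{D_i(E_i(p))}\,d(D_i)_{E_i(p)}$, a product-rule telescoping (you evaluate $d(D_i)$ at $p$ and $d(E_j)$ at $q$, the paper does the reverse, which is immaterial), the same two Lipschitz estimates summing to $C_g$, and Lemma~\ref{lem:det-perturbation} with $A=g_{ji}(q)$. Your domain caveats — segments for the mean value inequality, and $D_i(E_i(p))\in O_j$ — are points the paper passes over silently by treating (i) and (iii) directly as Lipschitz bounds, so your first option is exactly the paper's implicit reading.
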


\begin{proof}
Write $g_{ji}(p) = A(p) \cdot B(p)$ with $A(p) \coloneqq d(E_j)_{D_i(E_i(p))}$ and $B(p) \coloneqq d(D_i)_{E_i(p)}$. We first bound the Lipschitz constants of $A$ and $B$ on $O_i \cap O_j$.

For $A$: the map $p \mapsto D_i(E_i(p))$ is Lipschitz with constant at most $L_D L_E$, and $E_j$ has derivative Lipschitz constant $L_E'$, so
\[
\|A(p) - A(q)\|_{\mathrm{op}} \leq L_E' \cdot L_D L_E \|p - q\| = L_E' L_D L_E \|p - q\|.
\]
For $B$: $E_i$ is Lipschitz with constant $L_E$, and $D_i$ has derivative Lipschitz constant $L_D'$, so
\[
\|B(p) - B(q)\|_{\mathrm{op}} \leq L_D' L_E \|p - q\|.
\]
Hence the product $g_{ji} = A \cdot B$ satisfies, by the product rule for Lipschitz functions,
\begin{align*}
\|g_{ji}(p) - g_{ji}(q)\|_{\mathrm{op}}
&\leq \|A(p) - A(q)\|_{\mathrm{op}}\, \|B(q)\|_{\mathrm{op}} + \|A(p)\|_{\mathrm{op}}\, \|B(p) - B(q)\|_{\mathrm{op}} \\
&\leq (L_E' L_D L_E)(L_D)\,\|p - q\| + (L_E)(L_D' L_E)\,\|p - q\| \\
&= L_E\,(L_E L_D' + L_E' L_D^2)\,\|p - q\|.
\end{align*}
Finally, by Lemma~\ref{lem:det-perturbation} applied with $A = g_{ji}(q)$ (so $\|A\|_{\mathrm{op}} \leq L_E L_D$) and $A + B = g_{ji}(p)$,
\[
|\det g_{ji}(p) - \det g_{ji}(q)|
\leq d\,\|g_{ji}(p) - g_{ji}(q)\|_{\mathrm{op}}\,
\bigl(L_E L_D + \|g_{ji}(p) - g_{ji}(q)\|_{\mathrm{op}}\bigr)^{d-1}.
\]
Substituting the operator-norm bound $\|g_{ji}(p) - g_{ji}(q)\|_{\mathrm{op}} \leq C_g\,\|p-q\|$ gives the claim. \qedhere
\end{proof}

\subsection{Proof of Theorem~\ref{thm:sign-cocycle-stability}}\label{proof:sign-cocycle-stability}

\begin{proof}[Proof of Theorem~\ref{thm:sign-cocycle-stability}]
Fix $x \in U_i \cap U_j \cap U_k$ and set $y \coloneqq D_i(E_i(x))$. By Remark~\ref{rem:sign-constancy}, the sign $\omega_{ji}$ is constant on each connected component of each overlap. It suffices to show
\[
\sign(\det g_{ki}(x)) = \sign\bigl(\det(g_{kj}(y) \cdot g_{ji}(x))\bigr) = \omega_{kj}(x) \cdot \omega_{ji}(x).
\]
We do this by showing $\bigl|\det g_{ki}(x) - \det(g_{kj}(y) \cdot g_{ji}(x))\bigr| < |\det g_{ki}(x)|$, then correcting the evaluation point.

\paragraph{Step 0: Domain verification.}
We take $R=L_E L_D + 3$ in Definition~\ref{def:approximate-atlas}(5). Since $x \in U_i \cap U_j \cap U_k$ and $\|y - x\| \leq \varepsilon$, the point $y$ lies in $O_j \cap O_k$ by Definition~\ref{def:approximate-atlas}(5) (the closed $R\varepsilon$-neighborhood condition with $R = L_E L_D + 3 \geq 1$). In particular, the encoders $E_j, E_k$ are $C^1$ at $y$, so the Jacobians $g_{kj}(y) = d(T_{kj})_{E_j(y)}$ and $g_{ji}(x) = d(T_{ji})_{E_i(x)}$ are well defined. Moreover, $\Phi_j$ is $C^1$ at $y$ since $y \in O_j$. By Lemma~\ref{lem:off-manifold-reconstruction}, $\|\Phi_j(y) - y\| \leq (L_E L_D + 2)\varepsilon$, hence $\|\Phi_j(y) - x\| \leq (L_E L_D + 3)\varepsilon = R\varepsilon$, so $\Phi_j(y) \in O_k$ by Definition~\ref{def:approximate-atlas}(5). The encoder $E_k$ is therefore $C^1$ at $\Phi_j(y)$, justifying the appearance of $d(E_k)_{\Phi_j(y)}$ in Lemma~\ref{lemma:jacobian-factorisation}(2).

\paragraph{Step 1: Bounding the Jacobian perturbation.}
By Lemma~\ref{lemma:jacobian-factorisation},
\[
g_{ki}(x) = Q \cdot P, \qquad g_{kj}(y) \cdot g_{ji}(x) = Q' R P,
\]
where
\[
P \coloneqq d(D_i)_{E_i(x)} \in \R^{N \times d}, \quad
Q \coloneqq d(E_k)_y \in \R^{d \times N}, \quad
Q' \coloneqq d(E_k)_{\Phi_j(y)} \in \R^{d \times N}, \quad
R \coloneqq d(\Phi_j)_y \in \R^{N \times N}.
\]
Writing $Q' = Q + \Delta_Q$ and $\Delta_R \coloneqq R - I_N$,
\begin{equation}\label{eq:Delta-decomposition}
Q'RP = QP + \underbrace{Q\,\Delta_R\, P + \Delta_Q\,(I_N + \Delta_R)\,P}_{=:\,\Delta},
\end{equation}
so $g_{kj}(y) \cdot g_{ji}(x) = g_{ki}(x) + \Delta$. It remains to bound $\|\Delta\|_{\mathrm{op}}$.

\medskip

\textbf{Step 1a: The restricted action of $\Delta_R$ on $\mathrm{range}(P)$.}
A naive bound on $\|Q\,\Delta_R\,P\|_{\mathrm{op}}$ using $\|\Delta_R\|_{\mathrm{op}}$ fails: since $\Phi_j = D_j \circ E_j$ factors through $\R^d$, the differential $R$ has rank at most $d < N$ and annihilates a subspace of dimension at least $N - d$, forcing $\|\Delta_R\|_{\mathrm{op}} \geq 1$ regardless of reconstruction quality. The key observation is that only the restricted action of $\Delta_R$ on $\mathrm{range}(P)$ enters the product $Q\,\Delta_R\,P$, and vectors in $\mathrm{range}(P)$ are nearly tangential to~$M$.

Fix a unit $v \in \R^d$ and set $w \coloneqq Pv = d(D_i)_{E_i(x)}\,v$. The differential reconstruction condition (Definition~\ref{def:approximate-atlas}(4)) for chart~$i$ states $\|d(\Phi_i)_x|_{T_xM} - \Id_{T_xM}\|_{\mathrm{op}} \leq \eta$. Since $\eta < 1$, $d(\Phi_i)_x|_{T_xM} = P \circ d(E_i)_x|_{T_xM}$ is invertible. In particular $d(E_i)_x|_{T_xM} \colon T_xM \to \R^d$ is a bijection, so there exists a unique $u \in T_xM$ with $d(E_i)_x\,u = v$. Then
\[
w = P\,v = d(D_i)_{E_i(x)}\,d(E_i)_x\,u = d(\Phi_i)_x\,u,
\]
and condition~(4) gives $\|w - u\| \leq \eta\,\|u\|$.

Decompose $w = w_T + w_\perp$ with $w_T \in T_xM$ and $w_\perp \perp T_xM$. The normal component obeys
\begin{equation}\label{eq:normal-bound}
\|w_\perp\| = \|\Pi_{T_xM}^\perp(w - u)\| \leq \|w - u\| \leq \eta\,\|u\| \leq \frac{\eta}{1 - \eta}\,\|w\|,
\end{equation}
using $\|w\| \geq \|u\| - \|w - u\| \geq (1 - \eta)\|u\|$ in the last step.

\medskip

\textbf{Step 1b: Bounding $\|\Delta_R\,w\|$ via the tangent--normal decomposition.}
We bound $\|\Delta_R\,w\| = \|(d(\Phi_j)_y - I_N)\,w\|$ by treating $w_T$ and $w_\perp$ separately.

\emph{Tangential component.} Since $x \in U_j$ and $w_T \in T_xM$, condition~(4) for chart~$j$ gives $\|(d(\Phi_j)_x - I_N)\,w_T\| \leq \eta\,\|w_T\|$. To pass from $x$ to $y$, use the derivative Lipschitz constant $L_\Phi' = L_D' L_E^2 + L_D L_E'$ of $p \mapsto d(\Phi_j)_p$ on $O_j$ (Remark~\ref{rem:Lphi-constant}): since $\|y - x\| \leq \varepsilon$,
\[
\|(d(\Phi_j)_y - I_N)\,w_T\|
\leq \|(d(\Phi_j)_x - I_N)\,w_T\| + \|d(\Phi_j)_y - d(\Phi_j)_x\|_{\mathrm{op}}\,\|w_T\|
\leq (\eta + L_\Phi'\,\varepsilon)\,\|w_T\|
\leq (\eta + L_\Phi'\,\varepsilon)\,\|w\|,
\]
where the last step uses $\|w_T\| \leq \|w\|$, since $w_T$ is the orthogonal projection of $w$ onto $T_xM$.

\emph{Normal component.} For directions normal to~$M$ no reconstruction guarantee is available; we use the crude bound $\|d(\Phi_j)_y\|_{\mathrm{op}} \leq L_E L_D$:
\[
\|(d(\Phi_j)_y - I_N)\,w_\perp\|
\leq (L_E L_D + 1)\,\|w_\perp\|
\leq \frac{(L_E L_D + 1)\,\eta}{1 - \eta}\,\|w\|,
\]
where the last step uses~\eqref{eq:normal-bound}.

\emph{Combining.} Adding the two contributions,
\begin{equation}\label{eq:restricted-DeltaR}
\|\Delta_R\,w\|
\leq \biggl(\eta + L_\Phi'\varepsilon + \frac{(L_E L_D + 1)\eta}{1 - \eta}\biggr)\|w\|
\leq \biggl(\frac{(L_E L_D + 2)\eta}{1 - \eta} + L_\Phi'\varepsilon\biggr)\|w\|
= \eta_{\mathrm{eff}}\,\|w\|.
\end{equation}
The simplification step uses
\[
\eta + \frac{(L_E L_D + 1)\eta}{1 - \eta} = \frac{\eta(1 - \eta) + (L_E L_D + 1)\eta}{1 - \eta} = \frac{(L_E L_D + 2 - \eta)\eta}{1 - \eta} \leq \frac{(L_E L_D + 2)\eta}{1 - \eta},
\]
valid since $\eta \geq 0$. Since $\|w\| = \|Pv\| \leq L_D\|v\|$,
\begin{equation}\label{eq:DeltaR-P-bound}
\|\Delta_R\,P\,v\| \leq \eta_{\mathrm{eff}}\,L_D\,\|v\| \qquad \text{for all } v \in \R^d.
\end{equation}

\medskip

\textbf{Step 1c: Bounding $\|\Delta_Q\|_{\mathrm{op}}$.}
The matrices $Q = d(E_k)_y$ and $Q' = d(E_k)_{\Phi_j(y)}$ differ because $E_k$ is evaluated at the shifted point $\Phi_j(y)$. By condition~(ii) and Lemma~\ref{lem:off-manifold-reconstruction},
\begin{equation}\label{eq:DeltaQ-bound}
\|\Delta_Q\|_{\mathrm{op}} = \|d(E_k)_{\Phi_j(y)} - d(E_k)_y\|_{\mathrm{op}}
\leq L_E'\,\|\Phi_j(y) - y\| \leq L_E'\,\widetilde{\varepsilon},
\end{equation}
where $\widetilde{\varepsilon} = (L_E L_D + 2)\varepsilon$.

\medskip

\textbf{Step 1d: Assembling the bound on $\|\Delta\|_{\mathrm{op}}$.}
Returning to~\eqref{eq:Delta-decomposition},
\[
\|Q\,\Delta_R\,P\|_{\mathrm{op}} \leq L_E\,\eta_{\mathrm{eff}}\,L_D,
\]
by \eqref{eq:DeltaR-P-bound} and $\|Q\|_{\mathrm{op}} \leq L_E$. For the second term, $\|(I_N + \Delta_R)\,P\,v\| \leq (1 + \eta_{\mathrm{eff}})\,L_D\|v\|$ by~\eqref{eq:DeltaR-P-bound}, so
\[
\|\Delta_Q\,(I_N + \Delta_R)\,P\|_{\mathrm{op}}
\leq L_E'\,\widetilde{\varepsilon}\,(1 + \eta_{\mathrm{eff}})\,L_D.
\]
Therefore
\begin{equation}\label{eq:Delta-bound}
\|\Delta\|_{\mathrm{op}}
\leq L_E\,\eta_{\mathrm{eff}}\,L_D + L_E'\,\widetilde{\varepsilon}\,(1 + \eta_{\mathrm{eff}})\,L_D
= \Gamma.
\end{equation}

\paragraph{Step 2: Determinant perturbation.}
We have $g_{kj}(y) \cdot g_{ji}(x) = g_{ki}(x) + \Delta$ with $\|\Delta\|_{\mathrm{op}} \leq \Gamma$. By Lemma~\ref{lem:det-perturbation} with $A = g_{ki}(x)$, $B = \Delta$, and $\|A\|_{\mathrm{op}} \leq L_E L_D$,
\begin{equation}\label{eq:det-perturbation-bound}
\bigl|\det\bigl(g_{kj}(y) \cdot g_{ji}(x)\bigr) - \det g_{ki}(x)\bigr|
\leq d \cdot \Gamma \cdot (L_E L_D + \Gamma)^{d-1}.
\end{equation}

\paragraph{Step 3: Sign agreement.}
By non-degeneracy, $|\det g_{ki}(x)| \geq \delta$, and \eqref{eq:stability-condition} gives $d\,\Gamma\,(L_E L_D + \Gamma)^{d-1} < \delta$. Hence the right-hand side of~\eqref{eq:det-perturbation-bound} is strictly less than $|\det g_{ki}(x)|$, forcing
\[
\sign\bigl(\det(g_{kj}(y) \cdot g_{ji}(x))\bigr) = \sign(\det g_{ki}(x)),
\]
and by multiplicativity of $\sign \circ \det$,
\[
\sign(\det g_{ki}(x)) = \sign(\det g_{kj}(y)) \cdot \sign(\det g_{ji}(x)).
\]

\paragraph{Step 4: Evaluation point correction.}
It remains to show $\sign(\det g_{kj}(y)) = \omega_{kj}(x)$. By Step~0, both $x$ and $y$ lie in $O_j \cap O_k$. Consider the segment $\gamma(t) \coloneqq (1-t)x + ty$ for $t \in [0, 1]$. Since $\|\gamma(t) - x\| \leq \|y - x\| \leq \varepsilon$ and $x \in U_j \cap U_k$, every $\gamma(t)$ satisfies $\mathrm{dist}(\gamma(t), U_j \cap U_k) \leq \varepsilon$, so $\gamma(t) \in O_j \cap O_k$ by Definition~\ref{def:approximate-atlas}(5). On this set $g_{kj}$ is well defined and continuous in its base point by Lemma~\ref{lem:Ldet}. Condition~(v) of the theorem ($\varepsilon < \tau(M)$) further ensures that the segment lies within the open $\tau(M)$-tube of $M$, where the nearest-point projection is single-valued; this is what justifies the use of the segment $[x, y]$ as a continuous path inside a regular tubular neighbourhood.

The function $t \mapsto \det g_{kj}(\gamma(t))$ is continuous on $[0,1]$. By Lemma~\ref{lem:Ldet}, using $\|\gamma(t) - x\| \leq \varepsilon$ and the monotonicity of $\rho \mapsto d\,C_g\,\rho\,(L_E L_D + C_g\,\rho)^{d-1}$,
\[
|\det g_{kj}(\gamma(t)) - \det g_{kj}(x)|
\leq d\,C_g\,\|\gamma(t) - x\|\,\bigl(L_E L_D + C_g\,\|\gamma(t) - x\|\bigr)^{d-1}
\leq K_{\det} < \delta
\]
by~\eqref{eq:stability-condition}, so $|\det g_{kj}(\gamma(t))| \geq \delta - K_{\det} > 0$ along the entire segment. Since the determinant is continuous and never vanishes on $\gamma$, its sign is constant:
\[
\sign(\det g_{kj}(y)) = \sign(\det g_{kj}(x)) = \omega_{kj}(x).
\]
Combining with Step~3, $\omega_{ki}(x) = \omega_{kj}(x) \cdot \omega_{ji}(x)$.
\end{proof}

\begin{remark}[Interpretation of the two branches in~\eqref{eq:stability-condition}]
The first branch $d\,\Gamma\,(L_E L_D + \Gamma)^{d-1} < \delta$ controls the cocycle defect by mixing $\eta$ and $\varepsilon$ through $\Gamma$; it is the binding condition when reconstruction is the primary source of error. The second branch $K_{\det} < \delta$ controls the evaluation-point shift in Step~4 and depends only on $\varepsilon$ (which is already absorbed into the definition of $K_{\det}$). Neither branch implies the other.
\end{remark}

\begin{remark}[Interpretation of $\eta_{\mathrm{eff}}$]
In the exact setting ($\varepsilon = \eta = 0$) the reconstruction map $\Phi_j$ is the identity on $M$, and $d(\Phi_j)_x$ acts as the identity on $T_xM$ while annihilating normal directions. The cocycle condition holds exactly because all relevant vectors lie in tangent spaces. In the approximate setting, the decoder differential $P = d(D_i)_{E_i(x)}$ produces vectors $w = Pv$ that are \emph{nearly} tangential but have a small normal component of relative size $\eta/(1 - \eta)$. The reconstruction differential $d(\Phi_j)_y - I_N$ controls the tangential part by $\eta + O(\varepsilon)$ but amplifies the normal part by up to $L_E L_D + 1$. The effective error $\eta_{\mathrm{eff}}$ accounts for this amplification: it equals the tangent-restricted $\eta$ plus a correction of order $(L_E L_D)\eta/(1-\eta)$ from the normal component, plus an off-manifold evaluation correction of order $L_\Phi'\varepsilon$.
\end{remark}

\subsection{Proof of Theorem~\ref{thm:local-stability}}\label{proof:local-stability}

The proof is a direct adaptation of the proof of Theorem~\ref{thm:sign-cocycle-stability}, with each uniform constant replaced by its per-triple version. We make this explicit through the following dictionary.

\begin{proof}[Proof of Theorem~\ref{thm:local-stability}]
Fix $x \in U_i \cap U_j \cap U_k$ and set $y \coloneqq D_i(E_i(x))$, as in the global proof. Working through Steps~0--4 of the proof of Theorem~\ref{thm:sign-cocycle-stability} verbatim, we record at each invocation which hypothesis is used.

\medskip

\noindent
\emph{Dictionary of substitutions.}
\begin{center}
\centering
\small
\begin{tabularx}{\textwidth}{lXX}
\toprule
Step & Uniform invocation & Per-triple replacement \\
\midrule
Step~0 & $y \in O_j \cap O_k$ via Def.~\ref{def:approximate-atlas}(5) 
& same, since condition~(v)$_{ijk}$ is global \\

Step~0 & $\Phi_j(y) \in O_k$ 
& Lemma~\ref{lem:off-manifold-reconstruction} with $L_E^{(ijk)}, L_D^{(ijk)}$ \\

Step~1 & factorisation Lemma~\ref{lemma:jacobian-factorisation} 
& unchanged: only uses charts $i, j, k$ \\

Step~1a & $\|d(D_i)\|_{\mathrm{op}} \leq L_D$ 
& (iii)$_{ijk}$ for $\ell = i$ \\

Step~1a & condition (4) for chart $i$, with bound $\eta$ 
& $\eta_i \leq \eta^{(ijk)}$ \\

Step~1b & condition (4) for chart $j$, with bound $\eta$ 
& $\eta_j \leq \eta^{(ijk)}$ \\

Step~1b & $L_\Phi'$ for $\Phi_j$ 
& $L_\Phi'^{(ijk)} \coloneqq (L_D')^{(ijk)}(L_E^{(ijk)})^2 + L_D^{(ijk)}(L_E')^{(ijk)}$ \\

Step~1b & $\|d(\Phi_j)\|_{\mathrm{op}} \leq L_E L_D$ 
& $L_E^{(ijk)} L_D^{(ijk)}$ via (i)$_{ijk}$, (iii)$_{ijk}$ for $\ell = j$ \\

Step~1c & $\|\Delta_Q\|_{\mathrm{op}} \leq L_E'\,\widetilde\varepsilon$ 
& (ii)$_{ijk}$ for $\ell = k$ \\

Step~1c & $\|Q\|_{\mathrm{op}} \leq L_E$ 
& (i)$_{ijk}$ for $\ell = k$ \\

Step~2 & $\|g_{ki}\|_{\mathrm{op}} \leq L_E L_D$ 
& (i)$_{ijk}$ for $\ell = k$, (iii)$_{ijk}$ for $\ell = i$ \\

Step~3 & $|\det g_{ki}(x)| \geq \delta$ 
& $|\det g_{ki}(x)| \geq \delta^{(ijk)}$ \\

Step~4 & Lemma~\ref{lem:Ldet} for $\det g_{kj}$ 
& same lemma with constants $L_E^{(ijk)}, L_E'^{(ijk)}, L_D^{(ijk)}, L_D'^{(ijk)}$ \\

Step~4 & $|\det g_{kj}(x)| \geq \delta$ 
& $|\det g_{kj}(x)| \geq \delta^{(ijk)}$ \\
\bottomrule
\end{tabularx}
\end{center}

Inspecting the table, the chart~$k$ enters only through its encoder $E_k$ (via (i)$_{ijk}$ and (ii)$_{ijk}$ for $\ell = k$), never through its decoder $D_k$ or its reconstruction map $\Phi_k$. Likewise, the differential reconstruction errors that enter the proof are $\eta_i$ (Step~1a) and $\eta_j$ (Step~1b), and these are absorbed into the single per-triple quantity $\eta^{(ijk)} = \max(\eta_i, \eta_j)$.

Performing all substitutions, Steps~1--4 yield
\[
|\det(g_{kj}(y) \cdot g_{ji}(x)) - \det g_{ki}(x)| \leq d\,\Gamma^{(ijk)}\,\bigl(L_E^{(ijk)} L_D^{(ijk)} + \Gamma^{(ijk)}\bigr)^{d-1}
\]
and
\[
|\det g_{kj}(\gamma(t)) - \det g_{kj}(x)| \leq K_{\det}^{(ijk)}.
\]
Both right-hand sides are strictly less than $\delta^{(ijk)}$ by~\eqref{eq:local-stability-condition}. The sign-agreement argument of Steps~3--4 then gives $\omega_{ki}(x) = \omega_{kj}(x) \cdot \omega_{ji}(x)$ on the triangle $(i, j, k)$.

The final statement (global cocycle when the per-triple condition holds for every triangle) follows because the cocycle identity is required separately on each triple intersection.
\end{proof}

\subsection{Proof of Theorem~\ref{thm:w1-agreement}}\label{proof:agreement}

\begin{proof}[Proof of Theorem~\ref{thm:w1-agreement}]
The strategy is a one-parameter interpolation between the exact compatible atlas $\mathcal{A}_0$ and the learned atlas $\mathcal{A}$, with uniform non-degeneracy maintained along the path.

\paragraph{Step 0: Common domain.}
By hypothesis~(i), $\widetilde{Z}_i \subset \R^d$ is an open set containing the $\mu$-neighborhood of $Z_i^0$ on which both $D_i^0$ and $D_i$ are $C^1$. Since $\|E_i - E_i^0\|_{C^0(U_i)} \leq \mu$, we have $Z_i \subset \widetilde{Z}_i$, and for any $t \in [0,1]$ and $x \in U_i$ the point $E_i^t(x) = (1-t)E_i^0(x) + tE_i(x)$ lies on a segment of length $\leq \mu$ originating in $Z_i^0$, hence in $\widetilde{Z}_i$. The interpolated decoders $D_i^t \coloneqq (1-t)\,D_i^0 + t\,D_i$ are $C^1$ on $\widetilde{Z}_i$ for all $t \in [0,1]$. Since $\|E_i - E_i^0\|_{C^0(U_i)} \leq \mu$, the interpolated encoders $E_i^t \coloneqq (1-t)E_i^0 + t E_i$ satisfy $\|E_i^t - E_i^0\|_{C^0(U_i)} \leq t\mu \leq \mu$, so $E_i^t(U_i) \subset \widetilde{Z}_i$ provided $\widetilde{Z}_i$ contains the $\mu$-neighborhood of $Z_i^0$. (This last inclusion is automatic in practice: neural network decoders are $C^1$ on all of $\R^d$, and the exact decoder $D_i^0 = (E_i^0)^{-1}$ extends $C^1$-smoothly to a neighborhood of $\overline{Z_i^0}$ by the inverse function theorem applied to $E_i^0$ on a slight enlargement of $U_i$, where $E_i^0$ has non-degenerate Jacobian by smoothness and compactness of $\overline{U_i}$.)
\paragraph{Step 1: The interpolated family of atlases.}
With the encoders $E_i^t$ and decoders $D_i^t$ defined as in Step~0, set $\mathcal{A}_t \coloneqq \{(U_i, E_i^t, D_i^t)\}$. The transition maps $T_{ji}^t = E_j^t \circ D_i^t$ require evaluating $E_j^t$ at points $D_i^t(z) \in \R^N$ for $z \in E_i^t(U_i)$. In the exact case ($t = 0$), $D_i^0(E_i^0(x)) = x \in M$. For $t > 0$, $D_i^t(E_i^t(x))$ lies within distance $O(\mu)$ of $x \in M$. By Definition~\ref{def:approximate-atlas}(5) applied to $\mathcal{A}$, the encoders $E_j$ are defined on the closed $\varepsilon$-neighborhood of $U_j$; for $\mu$ smaller than this $\varepsilon$, all relevant evaluations lie in the domain of $E_j^t$, and the Jacobians $g_{ji}^t(x) = d(T_{ji}^t)_{E_i^t(x)}$ depend continuously on $t$.

\paragraph{Step 2: Uniform non-degeneracy along the homotopy.}
At $t = 0$, $|\det g_{ji}^0(x)| \geq \delta_0$ for all relevant $(i, j, x)$. The function $(t, x) \mapsto \det g_{ji}^t(x)$ is continuous on the compact set $[0, 1] \times \overline{U_i \cap U_j}$ for each pair $(i, j)$, hence uniformly continuous. Writing $C_0$ for the uniform Lipschitz constant of $(t, x) \mapsto \det g_{ji}^t(x)$ in $\mu$ -- which is finite and bounded by an expression in the $C^2$-norms of $\mathcal{A}_0$, the geometry of $M$, and $|I|^2$ (the number of overlapping pairs) -- the bound \eqref{eq:mu-condition} gives
\[
|\det g_{ji}^t(x) - \det g_{ji}^0(x)| \leq C_0 \cdot \mu < \frac{\delta_0}{2}
\]
for all $t \in [0, 1]$ and $x \in U_i \cap U_j$. Hence
\[
|\det g_{ji}^t(x)| \geq \frac{\delta_0}{2} > 0 \qquad \text{for all } t \in [0, 1],\; x \in U_i \cap U_j.
\]

\paragraph{Step 3: Sign constancy along the path.}
Since $\det g_{ji}^t(x)$ is continuous in $t$ and never vanishes, $\sign(\det g_{ji}^t(x))$ is constant in $t$ for each fixed $(i, j)$ and each connected component of $U_i \cap U_j$. Therefore
\[
\omega_{ji}^1(x) = \omega_{ji}^0(x) \qquad \text{on each connected component of } U_i \cap U_j.
\]

\paragraph{Step 4: Conclusion.}
The sign cocycles of $\mathcal{A}$ and $\mathcal{A}_0$ coincide pointwise, hence as elements of $\check{Z}^1(\mathcal{U}; \Z/2)$. By Proposition~\ref{prop:tangent-bundle-isomorphism}, $\mathcal{T}_{\mathcal{A}_0} \cong TM$, so $[\omega^{\mathcal{A}_0}] = w_1(\mathcal{T}_{\mathcal{A}_0}) = w_1(TM) \in \check{H}^1(\mathcal{U}; \Z/2)$ by Theorem~\ref{thm:w1-sign}. Therefore $[\omega^{\mathcal{A}}] = [\omega^{\mathcal{A}_0}] = w_1(TM)$.
\end{proof}

\begin{figure}[p]
    \centering
    \includegraphics[width=\textwidth, height=0.8\textheight, keepaspectratio]{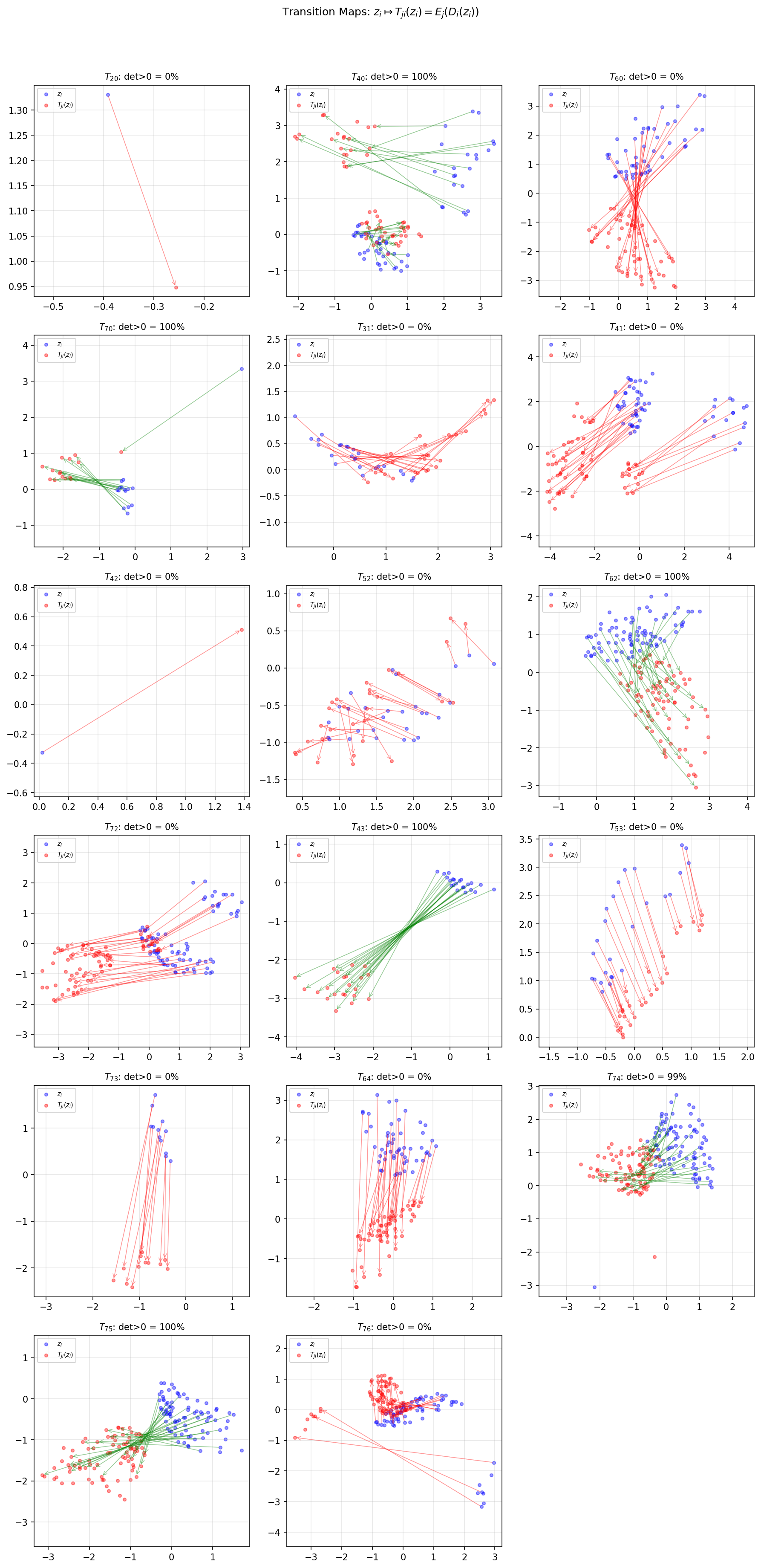}
    \caption{Transition maps $z \mapsto T_{ji}(z) = E_j(D_i(z))$ for the Klein bottle experiment (Section~\ref{sec:exp-klein}). Each panel shows the learned transition map between a pair of charts $(i, j)$, with source points in blue and their images under $T_{ji}$ in red (or green for triple-overlap regions). The title of each panel reports the percentage of overlap points with $\det g_{ji} > 0$; panels showing $0\%$ or $100\%$ indicate sign-consistent overlaps. On of the overlap display 99\% of consistency as it displays two points from another connected component.}
    \label{fig:klein-transitions_supplement}
\end{figure}
\begin{figure}[p]
    \centering
    \includegraphics[width=\textwidth, height=0.8\textheight, keepaspectratio]{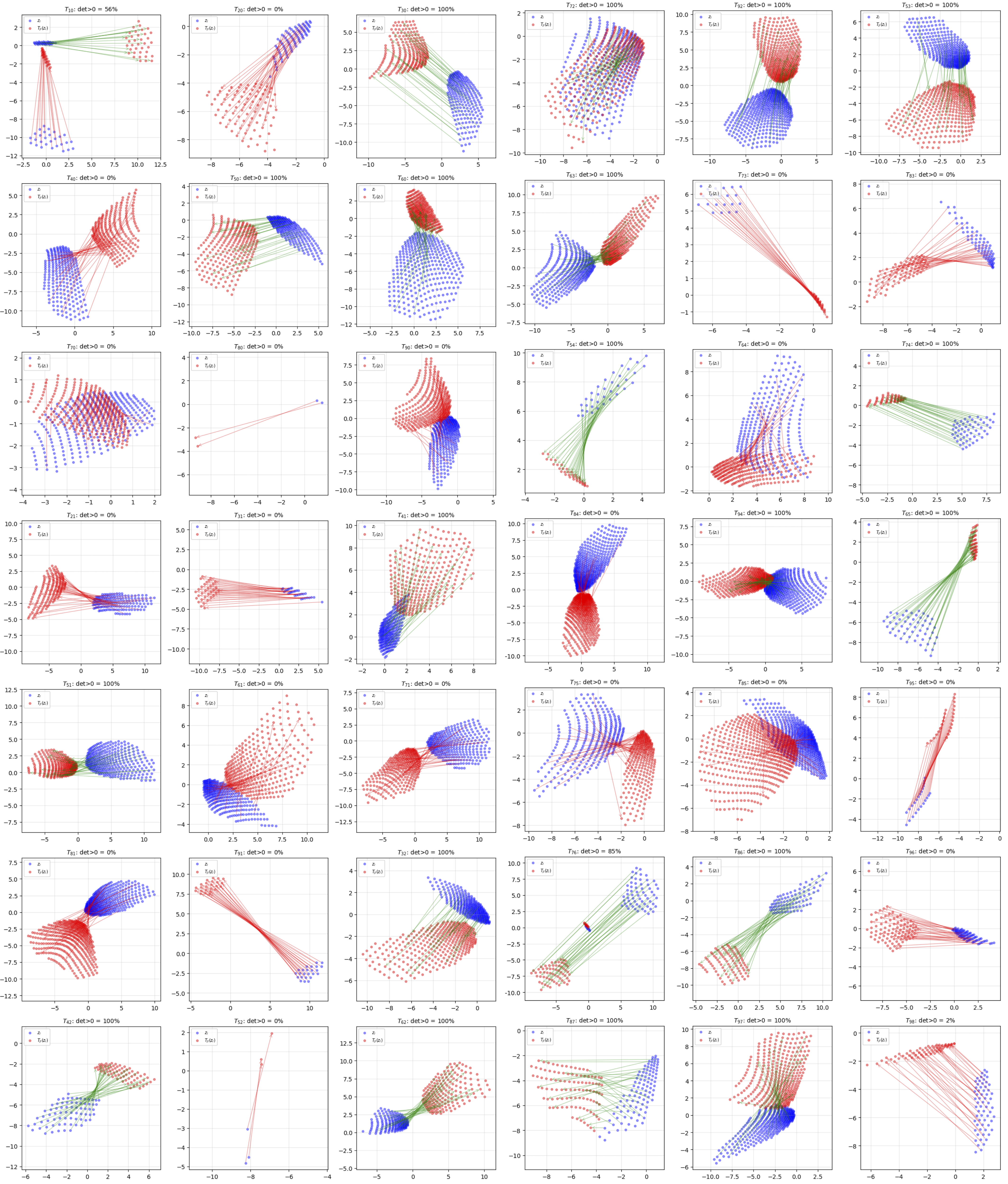}
    \caption{Transition maps $z \mapsto T_{ji}(z) = E_j(D_i(z))$ for the $\mathbb{RP}^2$ line patches experiment (Section~\ref{sec:exp-rp2}). Each panel shows the learned transition map between a pair of charts $(i, j)$, with source points in blue and their images under $T_{ji}$ in red (or green for triple-overlap regions). The title of each panel reports the percentage of overlap points with $\det g_{ji} > 0$; panels showing $0\%$ or $100\%$ indicate sign-consistent overlaps. The balanced distribution of positive and negative signs across the 25 overlap components confirms that the sign cocycle $[\omega] \in \check{H}^1(\mathcal{U}; \mathbb{Z}/2)$ is non-trivial, detecting the non-orientability of $\mathbb{RP}^2$.}
    \label{fig:rp2-transitions_supplement}
\end{figure}
\end{appendices}

\end{document}